\newtheorem{theorem}{Theorem}[section]
\newtheorem{lemma}[theorem]{Lemma}
\newtheorem{proposition}[theorem]{Proposition}
\newtheorem{corollary}[theorem]{Corollary}
\theoremstyle{definition}
\newtheorem{example}[theorem]{Example}
\newtheorem{remark}[theorem]{Remark}
\author{Bart Michels}
\title{The geometry of maximal flat submanifolds of symmetric spaces}
\thanks{Universit\'{e} Sorbonne Paris Nord, LAGA, CNRS, UMR 7539,  F-93430, Villetaneuse, France. Email: \texttt{michels@math.univ-paris13.fr}}
\subjclass[2010]{53C35, 22E15}
\begin{document}

\begin{abstract}Motivated by spectral asymptotics for orbital integrals in a relative trace formula, we generalize a number of geometric properties of geodesics in the hyperbolic plane, to maximal flat submanifolds of symmetric spaces of non-compact type.
\end{abstract}

\maketitle

\section{Introduction}

Let $G$ be a connected semisimple Lie group with finite center. Let $G = NAK$ be a choice of Iwasawa decomposition. Define $\mathfrak a = \operatorname{Lie}(A)$, and recall Harish-Chandra's formula for the spherical function of parameter $i\lambda \in i\mathfrak a^*$:
\begin{equation}\label{harishchandraformula}
\varphi_{i\lambda}(g) = \int_K e^{(\rho+i\lambda)(H(kg))} dk \,,
\end{equation}
where $H : G \to \mathfrak a$ is the Iwasawa projection and $\rho \in \mathfrak a^*$ the half-sum of positive roots. In \cite{Michels2022} we are interested in spectral asymptotics in a relative trace formula for maximal flat submanifolds of an associated locally symmetric space. In the analysis one requires asymptotics for the integral
\[ \int_A \varphi_{i\lambda}(a) b(a) da \]
(with a smooth cutoff $b(a)$)\,, as well as bounds for twisted versions thereof, as $\lambda$ grows in $\mathfrak a^*$. The problems are relative analogues of the problem of bounding spherical functions considered in \cite{duistermaat1983}.

In view of \eqref{harishchandraformula}, one is naturally led to study the behavior of the Iwasawa $\mathfrak a$-projection along sets of the form $kA$. More specifically, we are interested in the critical points of
\[ \lambda(H(ka)) \]
as a function of $a \in A$.
The maximal flat submanifolds of the symmetric space $G/K$ are the images of the sets $gA$. When $G = \operatorname{PSL}_2(\mathbb R)$, then $G/K$ is the hyperbolic plane, the maximal flats are the geodesics, and the motivating problem of bounding orbital integrals is solved in \cite{Marshall2016}. It takes as input classical facts about the geometry of geodesics in the hyperbolic plane. For general semisimple $G$ we require analogous results for maximal flat submanifolds, which to our knowledge are not established. This article is concerned with them.

The article is divided into three parts, which are about the Iwasawa $N$-, $A$- and $K$- projections of the sets $gA$, in each of which we generalize geometric facts that are apparent in the case of $\operatorname{PSL}_2(\mathbb R)$.

\subsection{Statement of results}

Consider the hyperbolic plane $\mathbb H = \{x+iy  \in \mathbb C : y > 0\}$ with its hyperbolic metric.
The group $\operatorname{PSL}_2(\mathbb R)$ acts on $\mathbb H$ by homographies. We make the standard choice of Iwasawa decomposition: Let $N$ be the subgroup of unipotent upper triangular matrices, $A$ the subgroup of diagonal matrices and $K =  \operatorname{PSO}_2(\mathbb R)$. Then $\operatorname{PSL}_2(\mathbb R) \cong N \times A \times K$, the diffeomorphism being given by multiplication. The upper half-plane model naturally lends itself to describe the Iwasawa projections geometrically. The element
\[ g = \begin{pmatrix}
1 & x \\
0 & 1
\end{pmatrix}\begin{pmatrix}
\sqrt y & 0 \\
0 & 1/\sqrt{y}
\end{pmatrix} \in NA\] sends $i$ to $x+iy$. That is, the real part of $gi$ can be identified with the $N$-projection of $g$, and the imaginary part with the $A$-projection.

By a (maximal) geodesic in $\mathbf H$ we mean the $1$-dimensional submanifold defined by it, although we will sometimes informally speak of geodesics with an orientation. The geodesics in $\mathbb H$ are the semicircles with centers on the horizontal axis, together with the vertical lines. The action of $\operatorname{PSL}_2(\mathbb R)$ on $i$ induces a diffeomorphism between $A$ and the vertical geodesic through $i$. Every geodesic is of the form $gAi$ with $g \in \operatorname{PSL}_2(\mathbb R)$.

Observe that the real part of every geodesic is a bounded set. We generalize this observation to semisimple Lie groups $G$, as follows.

\begin{theorem}\label{npartbounded}Let $G$ be a connected semisimple Lie group. For all $g \in G$, the $N$-projection of $gA$ is a relatively compact set.
\end{theorem}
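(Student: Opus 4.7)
The plan has three steps.

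\textbf{Step 1 (Reduction to $g \in K$).} Writing $g = n_0 a_0 k_0$ in $NAK$ form and, for $a \in A$, Iwasawa-decomposing $k_0 a = n_1 a_1 k_1$, one finds $g a = n_0 (a_0 n_1 a_0^{-1}) (a_0 a_1) k_1$, so
\[
n(ga) = n_0 \cdot a_0 \, n(k_0 a) \, a_0^{-1}.
\]
Left multiplication by $n_0$ and conjugation by the fixed $a_0$ are homeomorphisms of $N$, so the theorem reduces to the case $g = k \in K$: that $\{n(ka) : a \in A\}$ is relatively compact for each $k \in K$.

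\textbf{Step 2 (Gauss decomposition via the Cartan involution).} Let $\theta$ be the Cartan involution, fixing $K$ pointwise, inverting $A$, and exchanging $N$ with $N^- = \theta(N)$. For $g = ka$ one has $\theta(g)^{-1} = a k^{-1}$, so $g \theta(g)^{-1} = k a^2 k^{-1}$. On the other hand the Iwasawa $ka = n(ka)\, a(ka)\, k(ka)$ gives $g \theta(g)^{-1} = n(ka)\, a(ka)^2\, \theta(n(ka))^{-1}$. Comparing,
\[
k a^2 k^{-1} = n(ka) \cdot a(ka)^2 \cdot \theta(n(ka))^{-1},
\]
the (necessarily unique) Gauss ($NAN^-$) decomposition of the self-adjoint positive element $k a^2 k^{-1}$.

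\textbf{Step 3 (Boundedness via Cauchy--Binet).} Fix a faithful finite-dimensional real representation $\pi$ of $G$ with a $K$-invariant inner product, so that $\pi(K)$ acts orthogonally and $\pi(A)$ is positive-diagonal in a weight-space basis. The identity above then becomes the Cholesky factorization $\pi(k a^2 k^{-1}) = U D U^{\top}$ with $U = \pi(n(ka))$ upper unipotent and $D = \pi(a(ka)^2)$. The entries of $U$ are computable as ratios of minors of $\pi(k a^2 k^{-1})$; by Cauchy--Binet, each minor satisfies
\[
\det \pi(k a^2 k^{-1})[I, J] = \sum_{|L| = |I|} \det \pi(k)[I, L]\, \det \pi(k)[J, L] \prod_{l \in L} \lambda_l,
\]
where $\{\lambda_l\}$ are the eigenvalues of $\pi(a^2)$. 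The Cholesky formula expresses $U_{ij}$ as a ratio whose numerator is a possibly mixed-sign combination of such products and whose denominator is a principal minor (with coefficients $(\det \pi(k)[I', L])^2 \geq 0$). This ratio is a convex combination of the pointwise ratios $\det \pi(k)[J, L] / \det \pi(k)[I', L]$, and hence is bounded by a constant depending only on $\pi(k)$ and independent of the $\lambda_l$. Consequently $\pi(n(ka))$ and thus $n(ka) \in N$ remain in a bounded subset as $a$ varies.

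The main obstacle is to ensure that the Cauchy--Binet bound applies to all entries of $U$ in a representation $\pi$ sufficient to determine $n(ka) \in N$: for $G = \mathrm{SL}_n(\mathbb{R})$ the defining representation works, but for general semisimple $G$ one takes a sum of fundamental representations (one per simple root in $\mathfrak a$), and must verify that the corresponding Cholesky factors jointly recover $n(ka)$ and that the convex-combination bound carries over to each.
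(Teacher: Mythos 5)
Your argument is correct and is, at its core, the same as the paper's: after passing to a representation in an Iwasawa-compatible basis (the paper's Lemma~\ref{compatibleiwasawa}), both proofs express the entries of the $N$-part as ratios of the form $\sum_L w_L c_L d_L / \sum_L w_L d_L^2$ with positive $a$-dependent weights $w_L$ and $a$-independent coefficients, and bound them by $\max_{d_L \neq 0} |c_L/d_L|$ exactly as in \eqref{Nprojtriangleinequality}. Your packaging via the Gauss/Cholesky factorization of $\pi(ka^2k^{-1}) = \pi(ka)\,\pi(ka)^{T}$ and Cauchy--Binet is equivalent to the paper's Gram--Schmidt-plus-wedge-norms computation in \S\ref{npartsln}, since the minors of $\pi(ka)\pi(ka)^T$ are precisely the Gram determinants (inner products of wedges of rows) used there; what your version buys is a cleaner statement of why the numerator coefficient vanishes whenever the denominator coefficient does (both contain the factor $\det\pi(k)[J,L]$).

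Two small points. First, a general connected semisimple $G$ (even with finite center) need not admit a faithful finite-dimensional representation, so replace ``faithful'' by ``discrete kernel'' (e.g.\ $\operatorname{Ad}$): the kernel is central, hence meets $N$ trivially, and $\rho$ restricts to an isomorphism of $N$ onto the closed subgroup $\rho(N)$, so boundedness of $\pi(n(ka))$ still yields relative compactness of $n(ka)$ in $N$. Second, this same observation dissolves the ``main obstacle'' in your last paragraph: there is no need for a sum of fundamental representations, since a single representation with discrete kernel (in a basis of weight vectors ordered so that $\pi(N)$ is upper unipotent, which you need anyway for the Cholesky identification) already recovers $n(ka)$ up to this isomorphism.
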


Theorem~\ref{npartbounded} is proved in \S\ref{secNprojection} by diving into the mechanics of the Gram--Schmidt process, and showing that the orthogonalization part can be done with uniformly bounded operations. In fact, we will prove a stronger version with some uniformity, which is Theorem~\ref{npartboundeduniform}. The uniform version requires to partition the set of all maximal flats, which is naturally identified with $G/N_G(A)$, into a Zariski open `generic' set and several lower-dimensional `exceptional' sets. This partition generalizes the distinction between semicircles and vertical geodesics in the upper half plane $\mathbb H$, in which case the semicircles form the generic set. The partition is not dependent on any choice of model for the symmetric space $G/K$, but inherent to the choice of Iwasawa decomposition of $G$. Some of the lower-dimensional sets come from semistandard Levi subgroups, and it is no surprise that they are exceptional. But in general there are other exceptional sets, and the partition remains quite mysterious.

The second group of results concerns the Iwasawa $A$-projection. In the case of $G = \operatorname{PSL}_2(\mathbb R)$, define the height of a point in $\mathbb H$ to be its imaginary part. The heights of the points of a geodesic $gAi$ form a bounded-from-above set precisely when the geodesic is a semicircle. In that case, the height is maximized at a unique critical point, which is the midpoint of the semicircle, and the height tends to $0$ at infinity on the geodesic. Such a critical point exists if and only if $gAi$ is not vertical, meaning that $g \notin N \cdot N_G(A)$.

For a general connected semisimple Lie group $G$, we prove the following.

\begin{theorem}\label{aresultsallintro}Let $\lambda \in \mathfrak a^*$ be an element that is positive with respect to the choice of Iwasawa decomposition, non-singular and which does not lie in a proper subspace spanned by roots. 
\begin{enumerate}[label = (\roman*)]
\item For all $g \in G$ the ``height function''
\begin{align*}
h_{\lambda, g} : A & \to \mathbb R \\
a &\mapsto \lambda(H(ga))
\end{align*}
has at most one critical point. If it exists, it is non-degenerate and maximizes $h_{\lambda, g}$. 
\item The set of $g \in G$ for which a given $a \in A$ is a critical point of $h_{\lambda, g}$, is a non-empty smooth submanifold of codimension $\dim(A)$.
The set of $g \in G$ for which $h_{\lambda, g}$ has a critical point, is open.
\end{enumerate}
\end{theorem}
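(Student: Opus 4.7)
My plan is to base both parts on one Hessian computation. Fix $g \in G$, $a \in A$, and write $ga = n_{ga} a_{ga} k_{ga}$ in Iwasawa form. A direct differentiation, using the formula $\dot k_t k_t^{-1} = (\operatorname{Ad}(k_t)X)_{\mathfrak k}$ obtained by differentiating Iwasawa, gives for $X \in \mathfrak a$
\[
 dh_{\lambda,g}(a)(X) \;=\; B\bigl(H_\lambda,\, \operatorname{Ad}(k_{ga})X\bigr),
\]
\[
 \operatorname{Hess}(h_{\lambda,g})_a(X,X) \;=\; -2\sum_{\alpha > 0}\alpha(H_\lambda)\,\|Z_\alpha\|^2,
\]
where $H_\lambda \in \mathfrak a$ is the Killing dual of $\lambda$, the element $Z := \operatorname{Ad}(k_{ga})X \in \mathfrak p$ is expanded in the restricted root decomposition as $Z_{\mathfrak a} + \sum_{\alpha > 0}(Z_\alpha - \theta Z_\alpha)$ with $\theta$ the Cartan involution, and $\|\cdot\|^2 = -B(\cdot,\theta\cdot)$. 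Positivity of $\lambda$ makes $\alpha(H_\lambda) > 0$, so the Hessian is everywhere $\le 0$; hence $h_{\lambda,g}$ is concave and any critical point is a global maximum.

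The main obstacle is showing the Hessian is non-degenerate at a critical point $a_*$. The Hessian vanishes in the direction $X \neq 0$ exactly when $\operatorname{Ad}(k_{ga_*})X \in \mathfrak a$, so the task reduces to showing the subspace $V := \mathfrak a \cap \operatorname{Ad}(k_{ga_*})\mathfrak a$ is trivial. The structural step is to prove $V$ is \emph{coparabolic}: since $V$ is abelian and sits inside two maximal abelian subspaces of $\mathfrak p$, the Levi $\mathfrak l := Z_{\mathfrak g}(V)$ contains both $\mathfrak a$ and $\operatorname{Ad}(k_{ga_*})\mathfrak a$ as maximal abelian subspaces of $\mathfrak l \cap \mathfrak p$, hence each contains the $\mathfrak p$-part of the center of $\mathfrak l$, which is $\bigcap_{\alpha|_V = 0}\ker\alpha$; combined with the trivial inclusion $V \subset \bigcap_{\alpha|_V = 0}\ker\alpha$, this forces $V = \bigcap_{\alpha|_V = 0}\ker\alpha$ and hence $V^\circ = \operatorname{span}\{\alpha : \alpha|_V = 0\}$. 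The critical condition $\operatorname{Ad}(k_{ga_*}^{-1})H_\lambda \perp \mathfrak a$ gives $\lambda|_V = 0$, so $\lambda \in V^\circ$; if $V$ were nonzero this would contradict the hypothesis that $\lambda$ lies in no proper root-subspace. Uniqueness in (i) is then automatic: two critical points $a_1 \neq a_2$ would force $h_{\lambda,g}$, by concavity, to be constant on the segment between them, yielding a degenerate Hessian at $a_1$ in the nonzero direction $\log a_2 - \log a_1$.

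For (ii), the set $\{g : a \text{ is critical for }h_{\lambda,g}\}$ is $\Psi_a^{-1}(0)$ for the smooth map $\Psi_a : G \to \mathfrak a^*$, $\Psi_a(g) := dh_{\lambda,g}(a)$. Because $\exp(tY)$ commutes with $a$ for $Y \in \mathfrak a$, one has $\Psi_a(g\exp(tY)) = \Psi_{a\exp(tY)}(g)$; differentiating at $t = 0$ shows that the restriction of $d\Psi_a(g)$ to the $\mathfrak a$-direction in $T_g G$ equals the Hessian of $h_{\lambda,g}$ at $a$ as a map $\mathfrak a \to \mathfrak a^*$. At any critical $g$ this is an isomorphism by the non-degeneracy above, so $\Psi_a$ is a submersion on $\Psi_a^{-1}(0)$, and the latter is a codimension-$\dim A$ submanifold of $G$. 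Non-emptiness comes from Kostant's linear convexity theorem: the image of $k \mapsto \pi_{\mathfrak a}(\operatorname{Ad}(k)H_\lambda)$ equals $\operatorname{conv}(W \cdot H_\lambda)$, which contains $0$ because the Weyl group $W$ has no nonzero invariants in $\mathfrak a$ for a semisimple Lie algebra. Lastly, openness of $\{g : h_{\lambda,g}\text{ admits a critical point}\}$ follows from applying the implicit function theorem to $(g,a) \mapsto dh_{\lambda,g}(a)$ at a critical pair, with the $a$-derivative being the non-degenerate Hessian from (i).
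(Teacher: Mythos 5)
Your proposal is correct, but it takes a genuinely different route from the paper at its hardest point. For non-degeneracy, the paper first shows (via the conjugacy lemma $\operatorname{Ad}_g(H)\in\mathfrak a\Rightarrow g\in M'Z_G(H)$ and the sets $M'L$) that existence of a critical point forces $\kappa(ga)\notin\bigcup_{L\neq G}M'L$, and then that the Hessian is everywhere negative definite for such $k$; your argument instead analyzes the null space directly, showing $V=\mathfrak a\cap\operatorname{Ad}(k_{ga_*})\mathfrak a$ equals $\mathfrak z(\mathfrak l)\cap\mathfrak p$ for $\mathfrak l=Z_{\mathfrak g}(V)$, so its annihilator is spanned by roots and the critical-point condition would put $\lambda$ in a proper root-span --- the same mechanism, but self-contained and cleanly packaged. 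The real divergence is existence: the paper devotes its long technical \S4.2 to minimizing $k\mapsto\lVert E_{\mathfrak a}(\operatorname{Ad}_{k^{-1}}H_\lambda)\rVert^2$ on $K$ and ruling out nonzero minima via a Hessian analysis, whereas you invoke Kostant's linear convexity theorem, which gives $0\in\operatorname{conv}(W\cdot H_\lambda)=E_{\mathfrak a}(\operatorname{Ad}(K)H_\lambda)$ at once (and even without the genericity hypothesis on $\lambda$); you should just add the one-line translation that if $E_{\mathfrak a}(\operatorname{Ad}(k_0)H_\lambda)=0$ then $g=k_0^{-1}a^{-1}$ realizes the given $a$ as a critical point. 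This buys a much shorter proof at the cost of a substantial external theorem the paper avoids. Finally, for the codimension claim the paper proves that $f_{G,H_\lambda}$ is a submersion on $\mathcal C(G,H_\lambda)$ by a surjectivity argument in the $\mathfrak k$-directions (which also yields smooth dependence on $H_\lambda$), while you get surjectivity of $d\Psi_a$ already from the $\mathfrak a$-directions by reusing the non-degenerate Hessian of part (i); both are valid, and your openness argument via the implicit function theorem coincides with the paper's.
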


We prove Theorem~\ref{aresultsallintro} in \S\ref{secAprojection}. The proof is quite technical and occupies the larger part of this article. Regarding the second part, note that it is by no means obvious that $h_{\lambda, g}$ has a critical point for even a single $g$, because the domain $A$ is noncompact. The way in which we prove existence is by varying $g \in K$, and realizing the elements $g$ with a given critical point as the minima of a certain smooth function on the compact group $K$. It is likely that the set of $g \in G$ for which $h_{\lambda, g}$ has a critical point is in fact dense and that this can be proved using a very different argument, which is related to Theorem~\ref{npartbounded}; see Remark~\ref{atonegativechamber}.

\begin{remark}
The critical points in Theorem~\ref{aresultsallintro} can be thought of as giving the midpoints of the flat $gA \subset S$. It is in general too much to hope that $H(ga)$ has a critical point as a function of $a$. That is, the critical points can depend on $\lambda \in \mathfrak a^*$. (See Example~\ref{examplemidpointdepends}.) We will not use the term ``midpoint'', all the more because we have found no way to generalize the observation that for $\operatorname{PSL}_2(\mathbb R)$, the critical point corresponds to the center of the semicircle $gAi \subset \mathbb H$.\end{remark}

\begin{remark}In Theorem~\ref{aresultsallintro}, many things break down when $\lambda$ is singular, lies in a proper subspace spanned by roots or is nonpositive: the nondegeneracy, uniqueness, and existence of critical points. Regarding non-degeneracy, see Remark~\ref{hessiancriticalsometimesdef}. For non-uniqueness and non-existence, see \S\ref{sectionexistence}.\end{remark}

Finally, we turn our attention to the Iwasawa $K$-projection. The Lie group $\operatorname{PSL}_2(\mathbb R)$ is naturally identified with the unit tangent bundle $T^1(\mathbb H)$ via its action on the vertical vector at the base point, $(i, (0, 1))$. The Iwasawa $K$-projection of an element $g$ corresponds to a choice of direction at the point $gi$. The elements of the Weyl group $N_K(A)$ correspond to the vertical directions, up and down. As we approach infinity along a geodesic, the tangent line to the geodesic tends to a vertical one.

We generalize this observation as follows.

\begin{theorem}\label{KparttoMcentralizer}Let $G$ be a connected semisimple Lie group. Let $g \in G$ and $H \in \mathfrak{a}$. Then the $K$-projection of $g e^{tH}$ tends to $N_K(A) Z_{K}(H)$ as $t \to + \infty$.
\end{theorem}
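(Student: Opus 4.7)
The plan is to reduce the question to a tractable form and then analyze the asymptotics via the dynamics of conjugation by $e^{-tH}$ combined with the Bruhat decomposition. First, writing $g = n_g a_g \kappa(g)$ in Iwasawa form, left multiplication by $NA$ preserves the Iwasawa $K$-part, so $\kappa(g e^{tH}) = \kappa(\kappa(g)\, e^{tH})$ and we may assume $g = k \in K$. Second, picking $w \in N_K(A)$ with $\mathrm{Ad}(w)H \in \overline{\mathfrak{a}^+}$, the identity $\kappa(g e^{tH}) = \kappa(gw^{-1} e^{t\,\mathrm{Ad}(w)H}) \cdot w$ combined with the equality $N_K(A) Z_K(\mathrm{Ad}(w)H) \cdot w = N_K(A) Z_K(H)$ shows the statement is equivalent to its analogue for $\mathrm{Ad}(w)H \in \overline{\mathfrak{a}^+}$. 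Third, writing $k e^{tH} = e^{tH}(e^{-tH} k e^{tH})$ and absorbing the leading $A$-factor yields
\[ \kappa(k e^{tH}) = \kappa(e^{-tH} k e^{tH}), \]
so the problem becomes the asymptotics of $\kappa$ along the $\mathrm{Ad}$-orbit $t \mapsto e^{-tH} k e^{tH}$.

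To analyze this orbit, introduce the standard parabolic $P_H = L_H N_H$ attached to $H$, where $L_H = Z_G(H)$ is the Levi and $N_H = \exp\bigl(\bigoplus_{\alpha > 0,\,\alpha(H)>0} \mathfrak{g}_\alpha\bigr)$ is the unipotent radical; let $\bar N_H = \theta(N_H)$. The subset $\bar N_H L_H N_H$ is open and dense in $G$, and its intersection with $K$ is open and dense in $K$. For $k$ in this intersection, factor $k = \bar n \, l \, n$ uniquely. Since $l \in L_H$ commutes with $e^{tH}$, whereas $\mathrm{Ad}(e^{-tH})$ contracts $N_H$ to $\{e\}$ and expands $\bar N_H$ unboundedly (the eigenvalues of $\mathrm{ad}(H)$ on $\mathfrak{n}_H$ are positive and on $\bar{\mathfrak{n}}_H$ are negative), one obtains
\[ e^{-tH} k e^{tH} = \bar n(t)\, l\, n(t), \]
with $\bar n(t) \to \infty$ in $\bar N_H$ and $n(t) \to e$ in $N_H$.

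The main analytic step is to show that every subsequential limit of $\kappa\bigl(\bar n(t)\, l\, n(t)\bigr)$ lies in $N_K(A) Z_K(H)$. The guiding intuition, visible in the rank-one case $\operatorname{PSL}_2(\mathbb R)$ where the Iwasawa $K$-part of a lower-unipotent matrix tends to the nontrivial Weyl element as its off-diagonal entry grows, is that the blow-up in $\bar N_H$ contributes a Weyl group representative in $N_K(A)$, while the factor $l$ contributes only an element of $Z_K(H)$---its Iwasawa decomposition may be performed inside the Levi $L_H$, which places its $K$-part in $K \cap L_H = Z_K(H)$. Making this asymptotic precise is the main obstacle: the blow-up of $\bar n(t)$ has multiple exponential rates, one per positive root with $\alpha(H) > 0$, and one must show that the Gram--Schmidt-like process defining $\kappa$ is driven, in the limit, by the ``dominant direction'' in $\bar N_H$, producing an element of $N_K(A) Z_K(H)$. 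Finally, for $k$ in the closed complement of the open Bruhat cell, the conclusion extends by a density argument, using the continuity of $\kappa$ and the closedness of $N_K(A) Z_K(H)$ in $K$.
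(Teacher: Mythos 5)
Your reductions are fine: $\kappa(ge^{tH})=\kappa(\kappa(g)e^{tH})=\kappa\bigl(e^{-tH}\kappa(g)e^{tH}\bigr)$, the Weyl-conjugation making $H$ dominant, and the factorization $e^{-tH}ke^{tH}=\bar n(t)\,l\,n(t)$ through the parabolic $P_H=L_HN_H$ are all correct, and this contraction/Bruhat strategy is a genuinely different route from the paper's (which instead turns $t\mapsto \operatorname{Ad}_{\kappa(ke^{tH})}(H)$ into the flow of a vector field on $\mathfrak p$ and pins down $\omega$-limits via coordinates $c_\alpha$ that are monotone along orbits). However, the proposal has two genuine gaps. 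The first is the step you yourself flag as ``the main obstacle'': showing that every subsequential limit of $\kappa\bigl(\bar n(t)\,l\,n(t)\bigr)$ lies in $N_K(A)Z_K(H)$. In this formulation that claim \emph{is} the theorem, and it is not a routine consequence of ``blow-up in $\bar N_H$ contributes a Weyl element'': the analogous statement for an arbitrary sequence $\bar n_j\to\infty$ in $\bar N_H$ is false. For instance, in $\operatorname{SL}_3(\mathbb R)$ with $H$ regular (so $\bar N_H=\bar N$ and $N_K(A)Z_K(H)=M'$), take $\bar n_j$ with entries $j$ in positions $(2,1)$ and $(3,1)$ and $0$ in position $(3,2)$; a direct Gram--Schmidt computation shows $\kappa(\bar n_j)$ converges to an orthogonal matrix with a row proportional to $(0,1,-1)$, which is not in $M'$. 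What rescues the theorem is that along the specific curves $\bar n(t)=e^{-tH}\bar n\,e^{tH}$ the various root coordinates grow at distinct, tied exponential rates, so such ``balanced'' escape directions do not occur; exploiting this is precisely the hard part, and your proposal stops where that work begins.

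The second gap is the closing density argument. The conclusion is a pointwise statement about the limit $t\to+\infty$ for each fixed $k$, and knowing it on a dense set of $k$ together with continuity of $\kappa$ (which only compares $\kappa(k e^{tH})$ and $\kappa(k' e^{tH})$ at each fixed $t$) does not transfer it to limit points of that set; one would need uniformity in $k$, and uniformity genuinely fails here --- the paper's Remark~\ref{remarkKvsNproofs} notes that already for $\operatorname{SL}_3(\mathbb R)$ nearby orbits can have totally different limit points, with the bad set lying exactly over the complement of the open cell. To treat $k$ outside $\bar N_H L_H N_H$ you would instead need a cell-by-cell analysis via the full Bruhat decomposition relative to $P_H$ (or some other argument valid for every $k$), not a density argument. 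So as it stands the proposal is a plausible alternative outline, but both the core asymptotic and the extension to all $g$ remain unproved.
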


Theorem~\ref{KparttoMcentralizer} is proved by projecting the $K$-projection of the geodesic flow down to the Lie algebra in a specific way and realizing it as the flow of a vector field. The resulting dynamical system is quite mysterious, but the asymptotic behavior of individual orbits can be well understood. It might seem that Theorem~\ref{KparttoMcentralizer} is a statement about individual geodesics rather than maximal flats, but it is possible to formulate statements with uniformity in the variables $g$ and $H$; see Remark~\ref{remarkKvsNproofs}.

In the end the results on the $N$- and $K$-projections were not needed for the analysis in \cite{Michels2022}. But they complete the picture nicely, might be useful elsewhere, and they barely fail to provide alternative proofs of parts of Theorem~\ref{aresultsallintro}; see Remark~\ref{atonegativechamber}. A number of things remain mysterious, in particular the apparent relation between the partition in Theorem~\ref{npartboundeduniform} on the $N$-projection, and the dynamical system used in the proof of Theorem~\ref{KparttoMcentralizer}; see Remark~\ref{remarkKvsNproofs}.

\section{Preliminaries}

\label{preliminariesLiegroups}

\subsection{Lie groups and Lie algebras}

\label{notationliegroups}

Let $G$ be a reductive Lie group in the sense of Harish-Chandra \cite{harishchandra1975}; see also \cite[Chapter VII]{Knapp2002}. Most of the time $G$ will be connected semisimple with finite center and we will then simply say $G$ is semisimple. Let $K \subset G$ be a maximal compact subgroup and $\theta$ an involution of $G$ whose fixed point set is $K$. It induces an involution $\theta$ of the Lie algebra $\mathfrak g$, whose $+1$ and $-1$ eigenspaces we denote by $\mathfrak k$ and $\mathfrak p$ respectively. We denote the exponential of $X \in \mathfrak g$ by $\exp(X)$. Define the semisimple part $\mathfrak{g}_{ss} = [\mathfrak g, \mathfrak g]$. We extend the Killing form on $\mathfrak{g}_{ss}$ to a nondegenerate symmetric bilinear form $\langle \cdot, \cdot \rangle$ on $\mathfrak{g}$ that is positive definite on $\mathfrak{k}$ and negative definite on $\mathfrak{p}$, and with respect to which the center $\mathfrak{z}(\mathfrak{g})$ is orthogonal to $\mathfrak{g}_{ss}$. Define $\langle \cdot, \cdot \rangle_{\theta} = \langle \cdot, - \theta(\cdot) \rangle$, a positive definite symmetric bilinear form. All statements on $\mathfrak{g}$ involving norms, orthogonality and adjoints will be with respect to $\langle \cdot, \cdot \rangle_{\theta}$. Let $\mathfrak a \subset \mathfrak p$ be a maximal abelian subalgebra and $A = \exp(\mathfrak a)$. The choices of $\mathfrak a$ are all conjugate under $K$. Define $P = \exp(\mathfrak p)$. The multiplication map $P  \times K \to G$ is a diffeomorphism, known as the Cartan decomposition.

\subsection{Symmetric spaces and maximal flats}

\label{propsmaximalflatssection}

References for the following facts about symmetric spaces are \cite{Eberlein1996, Helgason1978}. 

Assume here that $G$ is semisimple. Then $\theta$ is a Cartan involution. The quotient $S = G/K$ carries a left-$G$-invariant Riemannian metric induced by the Killing form on $\mathfrak p$. It is a symmetric space of non-compact type, and every such space arises in this way.

The maximal flat submanifolds of $S$ are of the form $gAK$ with $g\in G$. Such $g$ is uniquely determined by the submanifold up to multiplication on the right by $N_G(A)$. When $\dim(A) = 1$, the maximal flats are precisely the geodesics. The rank of $G$ is defined to be $\dim(A)$.

Let $p \in P$. The tangent space $T_{pK}S$ is identified with $\mathfrak p$, using left multiplication by $p^{-1}$. Take $X \in \mathfrak p$ with norm $1$. The geodesic through $pK \in S$ with tangent vector $X$ has equation $t \mapsto p e^{tX}K$. A geodesic is called regular when a nonzero tangent vector at any (and hence every) point is a regular element of $\mathfrak g$. It is called singular otherwise. A geodesic is regular if and only if it lies in a unique maximal flat.

\subsection{Iwasawa decomposition}

\label{secroots}

Let $\Sigma$ be the set of restricted roots of $\mathfrak{a}$ in $\mathfrak{g}$. By convention, $0  \notin \Sigma$. We denote by $\mathfrak{g}_{\alpha}$ the root space of a root $\alpha \in \Sigma$, by $m(\alpha) = \dim(\mathfrak g_\alpha)$ its multiplicity and by $H_\alpha \in \mathfrak a$ the element corresponding to $\alpha$ under the isomorphism $\mathfrak a \cong \mathfrak a^*$ given by $\langle \cdot, \cdot \rangle$. Fix a set of positive roots $\Sigma^{+}$ with basis $\Pi$. Let $\mathfrak n \oplus \mathfrak a \oplus \mathfrak k$ and $N \times A \times K$ be the corresponding Iwasawa decompositions of $\mathfrak g$ and $G$. Define $M = Z_{K}(A)$ and $M' = N_{K}(A)$ and denote by $\mathfrak{m}$ the Lie algebra of $M$.

Denote the Lie algebra Iwasawa projections by $E_{\mathfrak{n}}$, $E_{\mathfrak{a}}$ and $E_{\mathfrak{k}}$. We have the orthogonal restricted root space decomposition
\begin{equation}\label{rootspacedecomp}
\mathfrak{g} = \mathfrak{a} \oplus \mathfrak{m} \oplus \bigoplus_{\alpha \in \Sigma} \mathfrak{g}_{\alpha}  \,.
\end{equation}
Denote the projection onto $\mathfrak g_{\alpha}$ by $R_\alpha$.

Denote the Iwasawa projections from $G$ onto $N$, $A$ and $K$ by $n$, $a$ and $\kappa$. Define the height $H(g) = \log(a(g)) \in \mathfrak{a}$, the logarithm being the Lie logarithm on $A$.

All choices of the data $(K, A, N)$ are conjugate by an element of $G$. When $G = \operatorname{GL}_n(\mathbb R)$ or $\operatorname{SL}_n(\mathbb R)$ we make all the standard choices: $K = \operatorname{O}_n(\mathbb R)$ respectively $\operatorname{SO}_n(\mathbb R)$, $A$ is the connected component of the diagonal subgroup and $N$ is the upper triangular unipotent subgroup.

\subsection{Centralizers}
\label{defgenericset}

\label{definitionlevis}
Denote by $\mathcal{L}$ the set of centralizers in $G$ of subgroups of $A$. They are the standard Levi subgroups of semistandard parabolic subgroups of $G$. We will denote such a centralizer typically by $L$. It is again reductive and inherits all the data as in the beginning of \S\ref{notationliegroups} and \S\ref{secroots} from $G$ in the natural way. We allow $G$ to be reductive because we will occasionally need to apply results to Levis $L \in \mathcal{L}$. When $L \in \mathcal{L}$ with Lie algebra $\mathfrak{l}$, define $\mathfrak{a}^{L} = \mathfrak{l}_{ss} \cap \mathfrak{a}$ and $\mathfrak{a}_{L} = \mathfrak{z}(\mathfrak{l}) \cap \mathfrak{a}$. Then $\mathfrak{a} = \mathfrak{a}^{L} \oplus \mathfrak{a}_{L}$ orthogonally. %; see for example \cite[Corollary 5.94]{knapp2002}
The set $\mathcal L$ contains $A$ and $G$, and when $G$ is semisimple we have $\mathfrak a^A = \mathfrak a_G = 0$ and $\mathfrak a_A = \mathfrak a^G = \mathfrak a$.

Define the positive Weyl chamber $\mathfrak{a}^{+} = \{ H \in \mathfrak{a} : \forall \alpha \in \Sigma^{+} : \alpha(H) > 0 \}$, the regular set
\[ \mathfrak{a}^{\operatorname{reg}} = \mathfrak{a} - \bigcup_{L \neq M} \mathfrak{a}_{L} = \mathfrak{a} - \bigcup_{\alpha \in \Sigma} \ker(\alpha) \,, \]
and the generic set
\[ \mathfrak{a}^{\operatorname{gen}} = \mathfrak{a}^{\operatorname{reg}} - \bigcup_{L \neq G} \mathfrak{a}^{L} \,. \]
Combined superscripts correspond to intersections: $\mathfrak{a}^{\operatorname{gen}, +} = \mathfrak{a}^{\operatorname{gen}} \cap \mathfrak{a}^{+}$.

We also define $(\mathfrak a^*)^{\operatorname{reg}}$, $(\mathfrak a^*)^{\operatorname{gen}}$ and $(\mathfrak a^*)^{+}$ to be the corresponding subsets under the isomorphism  $\mathfrak a \cong \mathfrak a^*$ defined by the Killing form. When $H \in \mathfrak a$ corresponds to $\lambda \in \mathfrak a^*$ under the isomorphism given by the Killing form, then $H \in \mathfrak a^{\operatorname{reg}}$ if and only if $\lambda$ is not orthogonal to any roots, and $H  \in \mathfrak a^{\operatorname{gen}}$ if and only if $\lambda$ is in addition not contained in a proper subspace spanned by roots.

We will frequently use the following lemma, so we take care to properly reference it.
We will frequently use the following lemmas, so we take care to properly reference them.

\begin{lemma} \label{conjugatetoAimplieslevi}Let $g \in G$ and $H \in \mathfrak{a}$. If $\operatorname{Ad}_{g}(H) \in \mathfrak{a}$, then $g \in M' Z_{G}(H)$.\end{lemma}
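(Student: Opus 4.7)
The plan is to produce an element $m \in M'$ with $\operatorname{Ad}_m(H) = \operatorname{Ad}_g(H)$; once such $m$ is in hand, the element $m^{-1}g$ centralizes $H$, and the factorization $g = m \cdot (m^{-1}g) \in M' Z_G(H)$ is immediate. Writing $H' := \operatorname{Ad}_g(H) \in \mathfrak{a}$, the task reduces to showing that two elements of $\mathfrak{a}$ that are $G$-conjugate must already be $M'$-conjugate.

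First I would reduce to the case $g \in K$. Using the Cartan decomposition $G = K \cdot \exp(\mathfrak{p})$, write $g = k \exp(Y)$ with $k \in K$ and $Y \in \mathfrak{p}$. Then $\operatorname{Ad}_{\exp Y}(H) = \operatorname{Ad}_{k^{-1}}(H')$ lies in $\mathfrak{p}$, since $K$ preserves $\mathfrak{p}$. Because $[\mathfrak{p},\mathfrak{p}] \subset \mathfrak{k}$ and $[\mathfrak{p},\mathfrak{k}] \subset \mathfrak{p}$, the series $e^{\operatorname{ad} Y}(H)$ decomposes under the Cartan decomposition of $\mathfrak{g}$ as $\cosh(\operatorname{ad} Y)(H) + \sinh(\operatorname{ad} Y)(H)$, with the two summands in $\mathfrak{p}$ and $\mathfrak{k}$ respectively. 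Thus the second summand vanishes. Since $Y \in \mathfrak{p}$, the operator $\operatorname{ad} Y$ is self-adjoint with respect to $\langle \cdot, \cdot \rangle_\theta$, hence diagonalizable with real eigenvalues, so $\ker \sinh(\operatorname{ad} Y) = \ker \operatorname{ad} Y$. Consequently $[Y,H] = 0$, so $\exp(Y) \in Z_G(H)$ and $\operatorname{Ad}_k(H) = H'$.

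Next I would pass from $K$-conjugacy to $M'$-conjugacy. Both $\mathfrak{a}$ and $\operatorname{Ad}_k(\mathfrak{a})$ are maximal abelian subspaces of $\mathfrak{p}$ containing $H'$, hence both are maximal abelian subspaces of $\mathfrak{z}_\mathfrak{p}(H')$, which is the $\mathfrak{p}$-part of the Levi subgroup $Z_G(H') \in \mathcal{L}$. Invoking the standard $K$-conjugacy of such subspaces applied inside the reductive group $Z_G(H')$, I obtain $k_0 \in Z_K(H')$ with $\operatorname{Ad}_{k_0}(\mathfrak{a}) = \operatorname{Ad}_k(\mathfrak{a})$. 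Then $m := k_0^{-1} k$ normalizes $\mathfrak{a}$, hence lies in $M'$; and since $\operatorname{Ad}_{k_0}$ fixes $H'$, the identity $\operatorname{Ad}_k(H) = \operatorname{Ad}_{k_0}(\operatorname{Ad}_m(H))$ forces $\operatorname{Ad}_m(H) = H'$, as required.

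The main obstacle will be the second reduction: one has to invoke the $K$-conjugacy of maximal abelian subspaces of the $\mathfrak{p}$-part inside the Levi $Z_G(H')$, which relies on the fact that $Z_G(H')$ inherits a Harish-Chandra-type reductive structure from $G$ as set out in \S\ref{definitionlevis}. The first reduction is more routine but delicate, in that the vanishing of the $\mathfrak{k}$-component forces a genuine centralization statement via the self-adjointness of $\operatorname{ad} Y$ rather than mere orthogonality.
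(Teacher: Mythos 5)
Your argument is correct, and its overall skeleton matches the paper's: reduce to $g \in K$ via the Cartan decomposition, then settle the $K$-case. The difference is that the paper's proof is essentially by citation, while you supply the content of both cited ingredients. For the reduction, the paper writes $g=kp$ and quotes Borel for the fact that $\operatorname{Ad}_p(H)\in\mathfrak p$ forces $p\in Z_G(H)$; your $\cosh/\sinh$ splitting of $e^{\operatorname{ad}Y}(H)$ together with the self-adjointness of $\operatorname{ad}Y$ for $Y\in\mathfrak p$ is precisely the standard proof of that cited statement, so this step is the same in substance. For the $K$-case, the paper invokes the uniqueness statement in the $KAK$ decomposition (Knapp, Lemma 7.38), whereas you argue directly: $\mathfrak a$ and $\operatorname{Ad}_k(\mathfrak a)$ are maximal abelian in $\mathfrak z_{\mathfrak p}(H')$, and $K$-conjugacy of maximal abelian subspaces of the $\mathfrak p$-part, applied inside the Levi $Z_G(H')\in\mathcal L$ (whose inherited Harish-Chandra structure the paper sets up in \S\ref{definitionlevis}), produces $k_0\in Z_K(H')$ and hence $m=k_0^{-1}k\in M'$ with $\operatorname{Ad}_m(H)=\operatorname{Ad}_g(H)$. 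This is a genuinely self-contained route for that step and buys independence from the $KAK$ uniqueness lemma at the cost of invoking the conjugacy theorem for the (possibly disconnected, but still Harish-Chandra class) reductive group $Z_G(H')$ — a hypothesis you rightly flag and which the paper's conventions for $\mathcal L$ do cover. One could note conversely that Knapp's $KAK$ uniqueness is itself proved by arguments of this flavor, so the two routes are close cousins; the paper's version is shorter on the page, yours is more transparent about where the Levi structure enters.
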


\begin{proof}This is stated in \cite[\S 5, Lemma~1]{harishchandra1975}. When $g \in K$, the statement gives precisely the degree of uniqueness in the $KAK$ decomposition of $G$, and a proof can be found in \cite[Lemma 7.38]{Knapp2002}. The general case can be reduced to $g \in K$ as follows. Write $g = kp$ in the Cartan decomposition. Then $\operatorname{Ad}_{p}(H) \in \operatorname{Ad}_{k^{-1}}(\mathfrak{a}) \subset \mathfrak{p}$, and \cite[\S V.24.C, Proposition 1]{borel1991} implies that $p \in Z_{G}(H)$. Then $\operatorname{Ad}_{k}(H) \in \mathfrak{a}$, and the conclusion follows from the $g \in K$ case.
\end{proof}

\begin{lemma}\label{truelemma}Let $g \in G$ and $H \in \mathfrak{a}$. If $\operatorname{Ad}_{g}(H) \in \mathfrak m \oplus \mathfrak{a}$, then $g \in M' Z_{G}(H)$.\end{lemma}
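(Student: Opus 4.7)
The plan is to reduce the lemma to Lemma~\ref{conjugatetoAimplieslevi} by showing that the hypothesis already forces the $\mathfrak m$-component of $\operatorname{Ad}_g(H)$ to vanish. First I would write $\operatorname{Ad}_g(H) = X + Y$ with $X \in \mathfrak m$ and $Y \in \mathfrak a$, and note that $[X, Y] = 0$ because $\mathfrak m = \mathfrak z_{\mathfrak k}(\mathfrak a)$ centralizes $\mathfrak a$; thus $\operatorname{ad}_X$ and $\operatorname{ad}_Y$ are commuting operators on $\mathfrak g$. The aim is then to prove $X = 0$.

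The key is a clash of spectral conditions on $\operatorname{ad}_X$ and $\operatorname{ad}_Y$. Since $\operatorname{ad}_{X+Y} = \operatorname{Ad}_g \operatorname{ad}_H \operatorname{Ad}_g^{-1}$ is conjugate to $\operatorname{ad}_H$, which is diagonalizable with real eigenvalues, every eigenvalue of $\operatorname{ad}_{X+Y}$ on $\mathfrak g_{\mathbb C}$ is real. On the other hand, $X \in \mathfrak k$ makes $\operatorname{ad}_X$ skew-adjoint with respect to $\langle \cdot, \cdot \rangle_\theta$, and hence diagonalizable over $\mathbb C$ with purely imaginary eigenvalues, while $Y \in \mathfrak p$ makes $\operatorname{ad}_Y$ self-adjoint with real eigenvalues. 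Two commuting operators that are individually diagonalizable over $\mathbb C$ are simultaneously diagonalizable. On any common eigenvector, $\operatorname{ad}_X$ and $\operatorname{ad}_Y$ have eigenvalues $i\beta$ and $\alpha$ with $\alpha, \beta \in \mathbb R$, so $\operatorname{ad}_{X+Y}$ has eigenvalue $\alpha + i\beta$; the reality of this eigenvalue forces $\beta = 0$. Hence $\operatorname{ad}_X$ has only zero eigenvalues, and being semisimple it vanishes identically.

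When $G$ is semisimple this gives $X = 0$ at once, so that $\operatorname{Ad}_g(H) = Y \in \mathfrak a$ and Lemma~\ref{conjugatetoAimplieslevi} delivers $g \in M' Z_G(H)$. The crux, and really the only nontrivial step, is the spectral matching that forces $\beta = 0$; once that is in place everything else either is immediate or is supplied by the previous lemma. In a fully reductive setting the same argument produces only $X \in \mathfrak z(\mathfrak g)$, which is then dispensed with by restricting to $\mathfrak g_{ss}$ (noting that $\mathfrak z(\mathfrak g) \cap \mathfrak k \subset \mathfrak m$ and that central elements are automatically in $Z_G(H)$).
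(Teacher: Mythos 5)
Your proposal is correct and is essentially the paper's own argument: the same decomposition $\operatorname{Ad}_g(H)=X+Y$ with $X\in\mathfrak m$, $Y\in\mathfrak a$, followed by simultaneous diagonalization of the commuting operators $\operatorname{ad}_X$ (purely imaginary spectrum, by skew-adjointness with respect to $\langle\cdot,\cdot\rangle_\theta$) and $\operatorname{ad}_Y$ (real spectrum), so that reality of the spectrum of $\operatorname{ad}_{\operatorname{Ad}_g(H)}$ forces $X=0$, and then Lemma~\ref{conjugatetoAimplieslevi} finishes. If anything, you state explicitly the conjugacy $\operatorname{ad}_{X+Y}=\operatorname{Ad}_g\operatorname{ad}_H\operatorname{Ad}_g^{-1}$ that the paper leaves implicit.
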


\begin{proof}If we can show that $\operatorname{Ad}_g(H) \in \mathfrak a$, the claim follows from Lemma~\ref{conjugatetoAimplieslevi}.

Consider the adjoint embedding $\operatorname{ad} : \mathfrak{g} \to \mathfrak{sl}(\mathfrak{g})$. Equip $\mathfrak g$ with any orthonormal basis compatible with the restricted root space decomposition \eqref{rootspacedecomp}. In such a basis, $\operatorname{ad}$ sends elements of $\mathfrak{a}$ to diagonal matrices and elements of $\mathfrak{k}$ to antisymmetric matrices.

Write $\operatorname{Ad}_g(H) = X + H'$ with $X \in \mathfrak{m}$ and $H' \in \mathfrak a$. In the chosen basis, $\operatorname{ad}_{ H'}$ is diagonal with real eigenvalues, and the antisymmetric matrix $\operatorname{ad}_{X}$ is diagonalizable with purely imaginary eigenvalues. Because $[H, X] = 0$, the elements $\operatorname{ad}_{ H'}$ and $\operatorname{ad}_{X}$ are simultaneously diagonalizable, so that the eigenvalues of $\operatorname{ad}_{ H'}+\operatorname{ad}_{X}$ are those of $\operatorname{ad}_{ H'}$ plus those of $\operatorname{ad}_{X}$, in a suitable ordering. If these eigenvalues are real, it must be that $X = 0$.

This proves that $\operatorname{Ad}_g(H) \in \mathfrak a$, and the lemma follows.
\end{proof}

\begin{lemma}We have $Z_G(A) = MA$ and $N_G(A) = M' A$.\end{lemma}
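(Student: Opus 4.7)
The plan is to prove $N_G(A) = M'A$ first and derive $Z_G(A) = MA$ from it. The inclusions $MA \subseteq Z_G(A)$ and $M'A \subseteq N_G(A)$ are immediate: by definition $M = Z_K(A)$ centralizes $A$, $M' = N_K(A)$ normalizes it, and $A$ centralizes itself.

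For the reverse inclusion $N_G(A) \subseteq M'A$, I would take $g \in N_G(A)$, write $g = pk$ with $p = \exp(X)$, $X \in \mathfrak p$ and $k \in K$ in the Cartan decomposition, and aim to show $X \in \mathfrak a$. The idea is to symmetrize via the Cartan involution: since $\theta|_{\mathfrak a} = -\operatorname{id}$, the involution $\theta$ sends $N_G(A)$ into itself, so $\theta(g) = p^{-1} k$ is also in $N_G(A)$. Consequently $g\,\theta(g)^{-1} = pk\cdot k^{-1}p = \exp(2X) \in N_G(A)$, meaning that $e^{2 \operatorname{ad}_X}$ preserves $\mathfrak a$.

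The main step will be to upgrade this to $\operatorname{ad}_X(\mathfrak a) \subseteq \mathfrak a$. Since $X \in \mathfrak p$, the operator $\operatorname{ad}_X$ is self-adjoint with respect to $\langle \cdot, \cdot \rangle_\theta$, hence diagonalizable with real eigenvalues; because $\lambda \mapsto e^{2\lambda}$ is injective on $\mathbb R$, the operators $\operatorname{ad}_X$ and $e^{2\operatorname{ad}_X}$ have the same eigenspaces and so the same invariant subspaces, giving $[X, \mathfrak a] \subseteq \mathfrak a$. On the other hand $[X, \mathfrak a] \subseteq [\mathfrak p, \mathfrak p] \subseteq \mathfrak k$ while $\mathfrak a \cap \mathfrak k = 0$, so in fact $[X, \mathfrak a] = 0$. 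Combined with the identity $\mathfrak z_{\mathfrak g}(\mathfrak a) = \mathfrak a \oplus \mathfrak m$ read off the root space decomposition~\eqref{rootspacedecomp}, this yields $X \in (\mathfrak a \oplus \mathfrak m) \cap \mathfrak p = \mathfrak a$. Therefore $p \in A$ and $k = p^{-1} g \in N_G(A) \cap K = M'$, so $g \in AM' = M'A$.

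The statement $Z_G(A) = MA$ then follows as a corollary: given $g \in Z_G(A) \subseteq N_G(A) = M'A$, write $g = m'a$ with $m' \in M'$ and $a \in A$; then $m' = g a^{-1} \in Z_G(A) \cap K = M$, so $g \in MA$. The most delicate point in the argument is the passage from $e^{2 \operatorname{ad}_X}$ preserving $\mathfrak a$ to $\operatorname{ad}_X$ preserving $\mathfrak a$; everything else is bookkeeping with the Cartan and root space decompositions.
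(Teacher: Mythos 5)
Your argument is correct, but it takes a genuinely different route from the paper. The paper disposes of both identities in one line: $Z_G(A)=MA$ is quoted from Knapp (Proposition 7.25), and $N_G(A)=M'A$ then follows by applying Lemma~\ref{conjugatetoAimplieslevi} to a regular $H\in\mathfrak a$: for $g\in N_G(A)$ one has $\operatorname{Ad}_g(H)\in\mathfrak a$, hence $g\in M'Z_G(H)=M'MA=M'A$. You instead prove the normalizer statement from scratch --- Cartan decomposition $g=\exp(X)k$, the symmetrization $g\,\theta(g)^{-1}=\exp(2X)\in N_G(A)$, and the spectral argument that the self-adjoint operator $\operatorname{ad}_X$ and its exponential $e^{2\operatorname{ad}_X}$ have the same eigenspaces, hence the same invariant subspaces, forcing $[X,\mathfrak a]\subseteq\mathfrak a\cap\mathfrak k=0$ and so $X\in\mathfrak a$ --- and then deduce the centralizer statement from the normalizer one, i.e.\ in the opposite direction to the paper. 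Every step checks out, including the delicate passage from invariance of $\mathfrak a$ under $e^{2\operatorname{ad}_X}$ to invariance under $\operatorname{ad}_X$, which is valid precisely because $\operatorname{ad}_X$ is diagonalizable with real eigenvalues and $\lambda\mapsto e^{2\lambda}$ is injective on $\mathbb R$ (a small shortcut: once $[X,\mathfrak a]=0$, maximality of $\mathfrak a$ in $\mathfrak p$ already gives $X\in\mathfrak a$, without invoking $\mathfrak z_{\mathfrak g}(\mathfrak a)=\mathfrak a\oplus\mathfrak m$). What your approach buys is self-containedness: it is essentially the standard proof underlying the facts the paper cites, and is close in spirit to the Cartan-decomposition reduction in the paper's proof of Lemma~\ref{conjugatetoAimplieslevi}; what the paper's approach buys is brevity and reuse of a lemma it has already established.
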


\begin{proof}
The first statement follows from \cite[Proposition 7.25]{Knapp2002}; the second statement follows by combining it with Lemma~\ref{conjugatetoAimplieslevi}.
\end{proof}

\subsection{Derivatives}

\label{sectionderivatives}

When $G$ is any Lie group with Lie algebra $\mathfrak g$ and $b$ is an element of the universal enveloping algebra $U(\mathfrak{g})$, we denote by $L_{b}$ the corresponding left invariant differential operator on $C^{\infty}(G)$
. When $X \in \mathfrak g \subset U(\mathfrak g)$, by definition
\[ (L_Xf)(g) = \left.\frac d{dt}\right\rvert_{t=0} f( ge^{tX}) \,.\]
When $f : M \to N$ is a differentiable map between differentiable manifolds, denote its differential at $m \in M$ by $(Df)_m$.
Using left translation we identify all tangent spaces $T_{g} G$ with $\mathfrak{g}$. When $g \in G$, denote by $L_{g}$ and $R_{g}$ the left and right multiplication by $g$ on $G$. With our convention on tangent spaces, we then have for all $g, h \in G$ that
\begin{align}
(D L_{g})_{h} & = \operatorname{id} \,,  \nonumber \\
(D R_{g})_{h} & = \operatorname{Ad}_{g^{-1}} \,. 
\end{align}
When $X, Y \in \mathfrak{g}$ we have
\begin{align}
L_{X} \operatorname{Ad}_{g}(Y) & = \operatorname{Ad}_{g}([X, Y]) \,, \label{derivativeAd} \\
L_{X} \operatorname{Ad}_{g^{-1}}(Y) & = - [X, \operatorname{Ad}_{g^{-1}}(Y)] \label{derivativeAdinverse} \,.
\end{align}

Assume now that $G$ is reductive as in the beginning of \S \ref{notationliegroups}.

%In the present setting we identify in particular $T_{k} K$ with $\mathfrak{k}$. The tangent spaces $T_{H} \mathfrak{a}$ are naturally identified with $\mathfrak{a}$, and we use this to identify $T_{(k, H)} (K \times \mathfrak{a} ) \cong \mathfrak{k} \oplus \mathfrak{a}$.

\begin{lemma} \label{computationAfirstderivative} \label{differentialkpart}The differentials of $n$, $a$ and $\kappa$ at $g \in G$ are as follows:
\begin{align*}
(D n)_{g} & = \operatorname{Ad}_{a(g)} \circ E_{\mathfrak{n}} \circ \operatorname{Ad}_{\kappa(g)} \,, \\
(D a)_{g} & = E_{\mathfrak{a}} \circ \operatorname{Ad}_{\kappa(g)} \,, \\
(D \kappa)_{g} & = \operatorname{Ad}_{\kappa(g)^{-1}} \circ E_{\mathfrak{k}} \circ \operatorname{Ad}_{\kappa(g)} \,.
\end{align*}
\end{lemma}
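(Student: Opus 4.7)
The plan is to differentiate the Iwasawa identity $g e^{tX} = n(g e^{tX})\, a(g e^{tX})\, \kappa(g e^{tX})$ at $t = 0$ for an arbitrary tangent vector $X$, and then to extract the three projected derivatives using the direct sum decomposition $\mathfrak{g} = \mathfrak{n} \oplus \mathfrak{a} \oplus \mathfrak{k}$.

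More concretely, fix $X \in T_g G$, which under the left-translation convention of \S\ref{sectionderivatives} corresponds to the curve $t \mapsto g e^{tX}$. Write the Iwasawa factors as $n(g e^{tX}) = n(g) \tilde n(t)$, $a(g e^{tX}) = a(g) \tilde a(t)$ and $\kappa(g e^{tX}) = \kappa(g) \tilde \kappa(t)$, where $\tilde n, \tilde a, \tilde \kappa$ are smooth curves through the identity in $N$, $A$ and $K$ respectively. By the same left-translation convention, their initial velocities $N(X) \in \mathfrak{n}$, $A(X) \in \mathfrak{a}$, $K(X) \in \mathfrak{k}$ are precisely $(Dn)_g(X)$, $(Da)_g(X)$ and $(D\kappa)_g(X)$, so the task reduces to expressing these three vectors in terms of $X$.

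Left-multiplying the Iwasawa identity by $g^{-1}$, and using the abelianness of $A$ (together with the stability $\operatorname{Ad}_A(\mathfrak n) = \mathfrak n$) to push $\tilde n(t)$ past $a(g)$, one arrives at
\[ e^{tX} = \kappa(g)^{-1} \bigl( a(g)^{-1} \tilde n(t) a(g) \bigr) \tilde a(t)\, \kappa(g)\, \tilde \kappa(t). \]
Differentiating at $t = 0$ by the product rule and applying $\operatorname{Ad}_{\kappa(g)}$ to both sides yields
\[ \operatorname{Ad}_{\kappa(g)}(X) = \operatorname{Ad}_{a(g)^{-1}} N(X) + A(X) + \operatorname{Ad}_{\kappa(g)} K(X). \]
The three summands on the right lie in $\mathfrak{n}$, $\mathfrak{a}$ and $\mathfrak{k}$ respectively, by the stabilities $\operatorname{Ad}_A(\mathfrak{n}) = \mathfrak{n}$ and $\operatorname{Ad}_K(\mathfrak{k}) = \mathfrak{k}$. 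Applying the Iwasawa projections $E_{\mathfrak{n}}$, $E_{\mathfrak{a}}$, $E_{\mathfrak{k}}$ to both sides and inverting the outer $\operatorname{Ad}$-factors then gives the three claimed formulas.

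There is no serious obstacle here: the argument is just careful bookkeeping via the product rule combined with uniqueness of the Iwasawa splitting at the Lie algebra level. The only delicate point is keeping straight the left-translation identifications of the various tangent spaces — in particular, verifying that the initial velocities of $\tilde n$, $\tilde a$, $\tilde \kappa$ really coincide with $(Dn)_g(X)$, $(Da)_g(X)$ and $(D\kappa)_g(X)$ as defined in \S\ref{sectionderivatives}.
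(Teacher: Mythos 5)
Your proposal is correct, and it takes a slightly different route from the paper. You differentiate the global factorization $g e^{tX} = n(ge^{tX})\,a(ge^{tX})\,\kappa(ge^{tX})$ once, left-translate back to the identity, and then read off all three differentials simultaneously from the single identity $\operatorname{Ad}_{\kappa(g)}(X) = \operatorname{Ad}_{a(g)^{-1}} N(X) + A(X) + \operatorname{Ad}_{\kappa(g)} K(X)$ via the direct sum $\mathfrak g = \mathfrak n \oplus \mathfrak a \oplus \mathfrak k$; I checked the product-rule computation and it gives exactly this identity, and the identification $\tilde n'(0) = (Dn)_g(X)$ etc.\ is indeed immediate from the left-translation convention of \S\ref{sectionderivatives}. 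The paper instead treats each projection separately: it uses the equivariance identities $n(ge^{X}) = n\cdot\bigl(a\, n(e^{\operatorname{Ad}_k(X)})\,a^{-1}\bigr)$, $a(ge^{X}) = a\cdot a(e^{\operatorname{Ad}_k(X)})$, $\kappa(ge^{X}) = \kappa(e^{\operatorname{Ad}_k(X)})\,k$ to reduce to the differentials at the identity, which are the Lie algebra projections $E_{\mathfrak n}, E_{\mathfrak a}, E_{\mathfrak k}$, and then applies the chain rule. The two arguments rest on the same facts (smoothness of the Iwasawa diffeomorphism, the Lie algebra splitting, and that $A$ normalizes $N$ while $K$ normalizes $\mathfrak k$); yours is more economical in that one differentiation yields all three formulas at once, while the paper's makes explicit the transformation behavior of the projections under left multiplication, which is reused elsewhere. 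One small quibble: where you "push $\tilde n(t)$ past $a(g)$" nothing about commutativity of $A$ is needed — the expression $\kappa(g)^{-1}\bigl(a(g)^{-1}\tilde n(t) a(g)\bigr)\tilde a(t)\kappa(g)\tilde\kappa(t)$ comes from plain cancellation of $n(g)^{-1}n(g)$ and re-bracketing; what you genuinely use, and do correctly invoke, is $\operatorname{Ad}_{a(g)^{-1}}(\mathfrak n) = \mathfrak n$ so that the first summand stays in $\mathfrak n$.
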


\begin{proof}Write $g = nak$ and take $X \in \mathfrak{g}$. For the $N$-projection, write
\begin{align*}
n(ge^{X}) & = n \cdot n(ake^{X}) \\
& = n \cdot n(a k e^{X}k^{-1}) \\
& = n \cdot (a \cdot n(e^{\operatorname{Ad}_{k}(X)}) \cdot a^{-1}) \,.
\end{align*}
In the last step, we have used that $n(ah) = a n(h) a^{-1}$ for $a \in A$ and $h \in G$. Therefore 
\[ (Dn)_g = (D L_n)_e \circ (D (\left.\operatorname{Ad}_a\right\rvert_N))_e \circ (Dn)_e \circ \operatorname{Ad}_k \,. \] The first statement now follows from the fact that $(Dn)_{e} = E_{\mathfrak{n}}$. The other statements are proved similarly.
For the $A$-projection we write
\[
a(ge^{X}) = a \cdot a(ke^{X}) = a \cdot a(e^{\operatorname{Ad}_{k}(X)}) 
\]
and use that $(D a)_{e} = E_{\mathfrak{a}}$.
For the $K$-projection we write
\[
\kappa(ge^{X})  = \kappa(k e^{X}) \\
= \kappa(e^{\operatorname{Ad}_{k}(X)}) \cdot k
\]
and use that $(D \kappa)_{e} = E_{\mathfrak{k}}$.
\end{proof}

The differential of the $A$-projection is also computed in \cite[Lemma~6.1]{marshall2015}. Even though the proof there is for $G = \operatorname{SL}_3(\mathbb R)$, the argument works in general. Compare also \cite[Corollary~5.2]{duistermaat1983}, but note that the Iwasawa decomposition used there is $KAN$ instead of $NAK$. 

\begin{lemma} \label{computationAsecondderivative} For $X, Y \in \mathfrak{g}$ and $g \in G$ we have
\begin{align*}
%R_{XY} \langle H_{0}, H(g) \rangle & = \langle H_{0}, [E_{\mathfrak{k}}(\operatorname{Ad}_{t(g)^{-1}}(X)), \operatorname{Ad}_{t(g)^{-1}}(Y)] \rangle \\
(L_{X}L_{Y} a)_g & = E_{\mathfrak{a}}( [E_{\mathfrak{k}}(\operatorname{Ad}_{\kappa(g)}(X)), \operatorname{Ad}_{\kappa(g)}(Y)] ) \\
& = E_{\mathfrak{a}}( [ E_{\mathfrak{k}}(\operatorname{Ad}_{\kappa(g)}(X)), E_{\mathfrak{n}}( \operatorname{Ad}_{\kappa(g)}(Y) ) ] ) \\
& = E_{\mathfrak{a}}( [ \operatorname{Ad}_{\kappa(g)}(X), E_{\mathfrak{n}}( \operatorname{Ad}_{\kappa(g)}(Y) ) ] ) \\
&= \sum_{\alpha \in \Sigma^+} \langle\theta R_{-\alpha}(\operatorname{Ad}_{\kappa(g)}(X)), (R_{\alpha} - \theta R_{-\alpha}) ( \operatorname{Ad}_{\kappa(g)}(Y) ) \rangle_\theta \cdot H_\alpha  \,.
\end{align*}
\end{lemma}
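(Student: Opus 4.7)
The plan is to differentiate the first-derivative formula from Lemma~\ref{computationAfirstderivative}, namely $(L_Y a)_h = E_{\mathfrak{a}}(\operatorname{Ad}_{\kappa(h)}(Y))$. Applying $L_X$ at $g$ amounts to computing
\[
\frac{d}{dt}\bigg|_{t=0} E_{\mathfrak{a}}(\operatorname{Ad}_{\kappa(g e^{tX})}(Y)),
\]
and by linearity of $E_{\mathfrak{a}}$ I only need to differentiate $\operatorname{Ad}_{\kappa(g e^{tX})}(Y)$ at $t=0$. Writing $\kappa(g e^{tX}) = \kappa(g) \ell(t)$ with $\ell(0) = e$, the description of $(D\kappa)_g$ in Lemma~\ref{differentialkpart} yields $\ell'(0) = \operatorname{Ad}_{\kappa(g)^{-1}}(E_{\mathfrak{k}}(\operatorname{Ad}_{\kappa(g)}(X)))$. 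Since $\operatorname{Ad}_{\kappa(g) \ell(t)}(Y) = \operatorname{Ad}_{\kappa(g)} \operatorname{Ad}_{\ell(t)}(Y)$, differentiating, applying $\operatorname{Ad}_{\kappa(g)}$ inside the bracket, and then applying $E_{\mathfrak{a}}$ produces the first line of the claim.

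To get from line one to line two I would set $X' := \operatorname{Ad}_{\kappa(g)}(X)$ and $Y' := \operatorname{Ad}_{\kappa(g)}(Y)$, decompose $Y' = E_{\mathfrak{n}}(Y') + E_{\mathfrak{a}}(Y') + E_{\mathfrak{k}}(Y')$, and check that the $\mathfrak{a}$-component of $[E_{\mathfrak{k}}(X'), E_{\mathfrak{a}}(Y') + E_{\mathfrak{k}}(Y')]$ vanishes. The $\mathfrak{k}$-summand of $Y'$ gives a bracket in $[\mathfrak{k}, \mathfrak{k}] \subset \mathfrak{k}$, and the $\mathfrak{a}$-summand gives a bracket in $[\mathfrak{k}, \mathfrak{a}] \subset \mathfrak{n} \oplus \theta\mathfrak{n}$; since every element $\theta N \in \theta\mathfrak{n}$ decomposes as $-N + (N + \theta N)$ in $\mathfrak{n} \oplus \mathfrak{a} \oplus \mathfrak{k}$, neither term has any $\mathfrak{a}$-component. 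Line three follows from the analogous observation that $[\mathfrak{n}, \mathfrak{n}] \subset \mathfrak{n}$ and $[\mathfrak{a}, \mathfrak{n}] \subset \mathfrak{n}$, eliminating the contributions of $E_{\mathfrak{n}}(X')$ and $E_{\mathfrak{a}}(X')$.

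For the final line, I would further decompose $X'$ along \eqref{rootspacedecomp} into its $\mathfrak{a}$-, $\mathfrak{m}$-, and $\mathfrak{g}_{\beta}$-parts, and rewrite $E_{\mathfrak{n}}(Y') = \sum_{\alpha > 0} (R_{\alpha}(Y') - \theta R_{-\alpha}(Y'))$, noting that $\theta R_{-\alpha}(Y') \in \mathfrak{g}_{\alpha}$. Because $[\mathfrak{g}_{\beta}, \mathfrak{g}_{\alpha}] \subset \mathfrak{g}_{\beta+\alpha}$ and $[\mathfrak{m} \oplus \mathfrak{a}, \mathfrak{g}_{\alpha}] \subset \mathfrak{g}_{\alpha}$, only summands landing in $\mathfrak{g}_0 = \mathfrak{a} \oplus \mathfrak{m}$ can contribute to $E_{\mathfrak{a}}$, which forces $\beta = -\alpha$. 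This leaves only the pairs $R_{-\alpha}(X')$ bracketed against the $\alpha$-summand of $E_{\mathfrak{n}}(Y')$. The identity $E_{\mathfrak{a}}([E, F]) = \langle E, F\rangle H_{\alpha}$ for $E \in \mathfrak{g}_{\alpha}$, $F \in \mathfrak{g}_{-\alpha}$ — a direct consequence of $\operatorname{ad}$-invariance of $\langle\cdot,\cdot\rangle$ and the definition of $H_{\alpha}$ — then produces the coefficient of $H_{\alpha}$ in the $\langle\cdot,\cdot\rangle$-language, and the conversion $\langle P, Q\rangle = -\langle\theta P, Q\rangle_{\theta}$ (from $\theta$-invariance of the Killing form plus the definition of $\langle\cdot,\cdot\rangle_\theta$) rewrites it in the desired form. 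The main step requiring care is keeping signs straight across this last conversion; everything else is essentially bookkeeping inside the root space decomposition.
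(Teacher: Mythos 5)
Your proposal is correct and follows essentially the same route as the paper: the first equality is obtained exactly as in the paper's proof, by differentiating $(L_Y a)(g) = E_{\mathfrak{a}}(\operatorname{Ad}_{\kappa(g)}(Y))$ via the formula for $(D\kappa)_g$ in Lemma~\ref{differentialkpart} together with \eqref{derivativeAd}. For the remaining three equalities the paper simply defers to \cite[Lemma~6.1]{duistermaat1983}, and your root-space bookkeeping (the vanishing of the $\mathfrak a$-components of $[\mathfrak k,\mathfrak k]$, $[\mathfrak k,\mathfrak a]$, $[\mathfrak n\oplus\mathfrak a,\mathfrak n]$, the identity $E_{\mathfrak a}([E,F])=\langle E,F\rangle H_\alpha$ for $E\in\mathfrak g_\alpha$, $F\in\mathfrak g_{-\alpha}$, and the conversion $\langle P,Q\rangle=-\langle\theta P,Q\rangle_\theta$) is a correct direct verification of exactly those steps.
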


\begin{proof}
Similar to the proof of \cite[Lemma~6.1]{duistermaat1983}. Alternatively, by Lemma~\ref{computationAfirstderivative} we find
\[ L_{Y} a(g) = E_{\mathfrak{a}} (\operatorname{Ad}_{\kappa(g)}(Y) ) \,. \]
Using the chain rule, \eqref{derivativeAd} and Lemma~\ref{differentialkpart} to compute $(D \kappa)_{g}$ we have
\begin{align*}
 L_{X} L_{Y} a(g) & = E_{\mathfrak{a}} (\operatorname{Ad}_{\kappa(g)}([(D \kappa)_{g} (X), Y]) ) \\
& = E_{\mathfrak{a}}( [E_{\mathfrak{k}}(\operatorname{Ad}_{\kappa(g)}(X)), \operatorname{Ad}_{\kappa(g)}(Y)] ) \,.
\end{align*}
This is the first equality. The other equalities follow as in \cite[Lemma~6.1]{duistermaat1983}.
%The second equality holds because $[\mathfrak k, \mathfrak a + \mathfrak k] \perp \mathfrak a$. The third equality holds because $[\mathfrak n +\mathfrak a, \mathfrak n] \perp \mathfrak a$. The last equality follows by writing $E_{\mathfrak n} = R_+ - \theta R_-$ and using that $[X_{-\alpha}, X_\alpha] $ equals $-\langle X_{-\alpha}, X_\alpha \rangle H_{\alpha} = \langle \theta X_{-\alpha}, X_\alpha \rangle_\theta H_\alpha$ if $X_{\pm \alpha} \in \mathfrak g_{\pm \alpha}$ and is orthogonal to $\mathfrak a$ otherwise.
\end{proof}

\section{The \texorpdfstring{$N$}{N}-projection and the Gram--Schmidt process}
\label{secNprojection}

Unless otherwise specified, $G$ is a semisimple Lie group as in \S\ref{notationliegroups}. In this subsection we prove Theorem~\ref{npartbounded} and the stronger Theorem~\ref{npartboundeduniform}.

%\begin{remark}We will use the words ``relatively compact'' and ``bounded'' interchangeably. One can define a subset $\Omega$ of a connected topological group to be bounded if for every neighborhood $U$ of the identity, there exists $k \in \mathbb N$ such that $\Omega$ lies in $U^k = U \cdots U$. This coincides with the notion of boundedness in topological vector spaces. In a connected locally compact group, being bounded is equivalent to being relatively compact.\end{remark}

Recall that the Iwasawa decomposition for $\operatorname{SL}_n(\mathbb R)$ is nothing but the Gram--Schmidt process: Take $g \in \operatorname{SL}_n(\mathbb R)$. There is a unique $n \in N$ such that the rows of $n^{-1}g$ are orthogonal for the Euclidean inner product on $\mathbb R^{n}$. There is a unique $a \in A$ such that the rows of $k := a^{-1}n^{-1}g$ have norm $1$. The Iwasawa decomposition of $g$ is then $nak$.

\subsection{Intro: \texorpdfstring{$\operatorname{SL}_2(\mathbb R)$}{SL2(R)}}

\label{npartsltwo}

We first prove Theorem~\ref{npartbounded} when $G = \operatorname{SL}_2(\mathbb R)$. This is quite trivial but already gives a good idea of what is happening.

\begin{proof}[Proof of Theorem~\ref{npartbounded} when $G = \operatorname{SL}_2(\mathbb R)$] Write $g = \begin{psmallmatrix}
v \\w
\end{psmallmatrix}$, and view $v, w \in \mathbb R^2$ as row vectors. Let $y > 0$. Multiplication on the right by $a = \operatorname{diag}(y, y^{-1}) \in A$ corresponds to letting the matrix $a$ act on $v$ and $w$. The projection $n(ga)$ is the matrix $\begin{psmallmatrix}
1 & x \\
0 & 1
\end{psmallmatrix}$ where
\[ x = \frac{\langle va, wa \rangle}{\left\langle wa, wa \right\rangle} \,. \]
 It is the unique matrix $n \in N$ for which the rows of $n^{-1}ga$ are orthogonal. We must show that $x$ is bounded. In the generic case where both coordinates of $w$ in the standard basis are nonzero, the denominator $\left\langle wa, wa \right\rangle$ is $\asymp \max(y^2, y^{-2})$, because $a$ acts by $y$ in one coordinate and by $y^{-1}$ in the other. The numerator is always $\ll \max(y^2, y^{-2})$, for the same reason. Therefore in this generic case, $x$ is bounded and the claim follows. When a coordinate of $w$ vanishes, the corresponding term in the denominator $\left\langle wa, wa \right\rangle$ is nonzero, but the same is then true in the numerator. So the same bounds hold, with $\max(y^2, y^{-2})$ replaced by $y^{2}$ or $y^{-2}$, depending on which coordinate of $w$ vanishes. The conclusion follows.
\end{proof}

\subsection{\texorpdfstring{$\operatorname{SL}_n(\mathbb R)$}{SLn(R)}, exterior powers}

\label{npartsln}

For $\operatorname{SL}_n(\mathbb R)$ the same phenomenon occurs, where if a term in a denominator of a certain fraction vanishes, then so does the corresponding term in the numerator. The denominators will here be norms of orthogonal projections onto the orthocomplement of previous vectors, and they are most conveniently expressed using norms on exterior powers.% The argument will generalize to general semisimple $G$ by using a finite-kernel representation into some $\operatorname{SL}_n(\mathbb R)$, where those terms become norms on weight spaces.

When $(V, b)$ is a bilinear space of finite dimension over a field, $b$ can be identified with a linear map $V \to V^*$. It induces a linear map $\bigwedge^k V \to \bigwedge^k V^* \cong (\bigwedge^k V)^*$ for all $k \geq 0$, where the identification $\bigwedge^k V^* \cong (\bigwedge^k V)^*$ comes from the natural pairing between $\bigwedge^k V$ and $\bigwedge^k V^*$. We get a natural bilinear form $\bigwedge^k b$ on $\bigwedge^k V$. Assume now that the field is $\mathbb R$, and that $b$ is symmetric positive definite. Then so is $\bigwedge^k b$. We denote all induced bilinear forms by $\langle \cdot, \cdot \rangle$ for convenience. If $(e_i)$ is an orthonormal basis of $V$, then an induced basis of $\bigwedge^k V$ consisting of elements of the form $e_{j_1} \wedge \cdots \wedge e_{j_k}$ is also orthonormal. As a consequence, if $v, w_1, \ldots, w_k \in V$ with the $w_i$ linearly independent, $W = \operatorname{span}(w_i)$ and $\operatorname{pr}_{W^\perp}$ denotes the orthogonal projection onto $W^\perp$, then
\begin{equation}\label{normprojwedgesingle}
\lVert \operatorname{pr}_{W^\perp}(v) \rVert = \frac{\left\lVert v \wedge \left(\bigwedge_{i=1}^k w_i\right)\right\rVert}{\left\lVert \bigwedge_{i=1}^k w_i \right\rVert} \,,
\end{equation}
where the norms are those induced on $V$, $\bigwedge^{k+1} V$ and $\bigwedge^{k} V$. More generally, for $v_1, v_2 \in V$ we have
\begin{equation}\label{orthprojectionnormintermsofwedge}
\left\langle \operatorname{pr}_{W^\perp}(v_1), \operatorname{pr}_{W^\perp}(v_2) \right\rangle = \frac{\left\langle v_1 \wedge \left(\bigwedge_{i=1}^k w_i\right), v_2 \wedge \left(\bigwedge_{i=1}^k w_i\right)  \right\rangle}{\left\lVert \bigwedge_{i=1}^k w_i \right\rVert^2} \,.
\end{equation}
We also have the $i$th coefficient of $\operatorname{pr}_W(v)$ in the basis $(w_i)$ is given by
\begin{equation}\label{orthoprojectioncoeffintermsofwedge}
\frac{\left \langle w_1 \wedge \cdots \wedge \widehat{w_i} \wedge v\wedge \cdots \wedge w_{k}, \bigwedge_{i=1}^k w_i \right \rangle}{\left\lVert \bigwedge_{i=1}^k w_i \right\rVert^2} \,,
\end{equation}
where the hat denotes omission. 

\begin{proof}[Proof of Theorem~\ref{npartbounded} when $G = \operatorname{SL}_n(\mathbb R)$] Write $g$ as a column of row vectors: $g = (v_1, \ldots, v_n)^T$ with the $v_i \in \mathbb R^n$, and view $a \in A$ as acting on $\mathbb R^n$ diagonally. Define $V_i = \operatorname{span}(v_{i}, \ldots, v_n)$, so that $V_{n+1} = \{0\}$. Define $w_i^a$ to be the $i$th row of $n(ga)^{-1}ga$. Equip $\mathbb R^n$ with the Euclidean inner product. The first step in the Gram--Schmidt process applied to $ga$ is the recurrence relation
\begin{align*}
w_{n}^a &= v_n a\,, \\
w_{i-1}^a &= \operatorname{pr}_{(V_{i}a)^\perp} (v_{i-1}a) \\
&= v_{i-1}a - \sum_{j=i}^n \frac{\langle v_{i-1}a, w_j^a \rangle}{\langle w_j^a, w_j^a \rangle }w_j^a \qquad (i = n, \ldots, 2) \,.
\end{align*}
In the above sum, the coefficient in the term with index $j$ is the $(i-1,j)$th entry of $n(ga)$. We must show that those coefficients are bounded independently of $a$. For such a coefficient, with $i \leq j$, we have
\begin{align*}
\frac{\langle v_{i-1}a, w_j^a \rangle}{\langle w_j^a, w_j^a \rangle }
&= \frac{\left\langle \operatorname{pr}_{(V_{j+1}a)^\perp}(v_{i-1}a), \operatorname{pr}_{(V_{j+1}a)^\perp}(v_ja)\right\rangle}{\left\lVert \operatorname{pr}_{(V_{j+1}a)^\perp}(v_ja)\right\rVert^2}  \\
&= \frac{\left\langle v_{i-1}a \wedge \left(\bigwedge_{k = j+1}^n v_k a\right), \bigwedge_{k = j}^n v_k a \right\rangle}{\left\lVert \bigwedge_{k = j}^n v_k a \right\rVert^2} \,,
\end{align*}
where we have used \eqref{orthprojectionnormintermsofwedge}.
The action of $A$ on $\bigwedge^{n-j+1} \mathbb R^n$ is still diagonal in an induced basis. Fix such a basis and write $a = \operatorname{diag}(a_1, \ldots, a_K)$ in that basis. If $c_1, \ldots, c_K$ denote the coordinates of $v_{i-1} \wedge \left(\bigwedge_{k = j+1}^n v_k \right)$ and $d_1, \ldots, d_K$ those of $\bigwedge_{k = j}^n v_k$, then by the triangle inequality
\begin{align}\label{Nprojtriangleinequality}
\begin{split}
\left\lvert\frac{\langle v_{i-1}a, w_j^a \rangle}{\langle w_j^a, w_j^a \rangle }\right\rvert &= \left\lvert\frac{\sum_{k=1}^K a_k^2 c_k d_k}{\sum_{k=1}^K a_k^2 d_k^2} \right\rvert \\
&\leq \max_{d_k \neq 0} \left\lvert \frac{c_kd_k}{d_k^2}\right\rvert \,.
\end{split}
\end{align}
This bound does not depend on $a$, so that the entries of $n(ga)$ are bounded.
\end{proof}

\subsection{Semisimple groups}

\label{Npartsemisimplegroup}

Now let $G$ be a semisimple Lie group. There is no real reason to assume that $G$ has finite center, and Theorem~\ref{npartbounded} is insensitive to central extensions in any case, but we do so because our setup in \S\ref{notationliegroups} is under this assumption. Let $\rho : G \to \operatorname{GL}(V)$ be any finite-dimensional representation with discrete kernel, such as the adjoint representation $\operatorname{Ad} : G \to \operatorname{GL}(\mathfrak g)$, or the standard representation $\operatorname{Std}$ if $G$ is already linear. The fact that $\rho$ can be arbitrary is not essential, but it leads to questions about uniformity.

\begin{remark}Necessarily $\rho(G) \subset \operatorname{SL}(V)$, because $\mathfrak g = [\mathfrak g,  \mathfrak g]$ consists of commutators. Note that $\rho(G)$ is automatically closed, by \cite[Proposition 7.9]{Knapp2002} or alternatively by properties of Malcev closure \cite[\S1.4.2, Theorem 3]{onishchik1980}.\end{remark}

\begin{lemma}\label{compatibleiwasawa}In a suitable basis of $V$, the groups $\rho(N)$, $\rho(A)$ and $\rho(K)$ are contained in the standard Iwasawa components of $\operatorname{GL}(V)$.\end{lemma}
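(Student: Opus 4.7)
The plan is to exhibit an ordered orthonormal basis of $V$, with respect to a well-chosen inner product, in which $\rho(A)$ is diagonalized and the $\mathfrak{a}$-weights are arranged in an order compatible with $\Sigma^{+}$. Concretely, I would first equip $V$ with an inner product $\langle \cdot, \cdot \rangle_{V}$ for which $d\rho(\mathfrak{k})$ acts by skew-symmetric operators and $d\rho(\mathfrak{p})$ by symmetric operators, i.e.\ $d\rho(\theta X) = -d\rho(X)^{T}$ for all $X \in \mathfrak{g}$. Averaging any starting inner product over the compact group $K$ achieves $K$-invariance and hence skew-symmetry of $d\rho(\mathfrak{k})$; the additional symmetry of $d\rho(\mathfrak{p})$ is the admissibility of $\rho$ and follows from Mostow's theorem, asserting that the Cartan decomposition $G = K\exp(\mathfrak{p})$ is compatible, via $\rho$, with the standard Cartan decomposition of $\operatorname{GL}(V)$ in a suitable basis; see \cite[Chapter VII]{Knapp2002}.

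Next I would decompose $V$ into simultaneous eigenspaces of $d\rho(\mathfrak{a})$. Since $d\rho(\mathfrak{a})$ is a commuting family of symmetric operators, $V = \bigoplus_{\mu} V_{\mu}$ orthogonally, indexed by the restricted $\mathfrak{a}$-weights $\mu \in \mathfrak{a}^{*}$ of $\rho$. Fix any $H_{0} \in \mathfrak{a}^{+}$ and list the weights as $\mu_{1}, \mu_{2}, \ldots$ with $\mu_{1}(H_{0}) \geq \mu_{2}(H_{0}) \geq \cdots$, breaking ties arbitrarily. Pick orthonormal bases of the individual weight spaces $V_{\mu_{i}}$ and concatenate them, in that order, into an orthonormal basis $(e_{1}, \ldots, e_{\dim V})$ of $V$.

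Relative to this basis all three inclusions are immediate. The group $\rho(A) = \exp(d\rho(\mathfrak{a}))$ is diagonal with positive entries, because $d\rho(\mathfrak{a})$ is diagonal in $(e_{i})$; $\rho(K)$ lies in $\operatorname{O}(V)$ because $\langle \cdot, \cdot \rangle_{V}$ is $K$-invariant and $(e_{i})$ is orthonormal. For $\rho(N) = \exp(d\rho(\mathfrak{n}))$, the relation $[H, X] = \alpha(H) X$ for $H \in \mathfrak{a}$, $X \in \mathfrak{g}_{\alpha}$, gives $d\rho(\mathfrak{g}_{\alpha}) V_{\mu} \subseteq V_{\mu + \alpha}$. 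For $\alpha \in \Sigma^{+}$ the weight $\mu + \alpha$ pairs strictly larger with $H_{0}$ than $\mu$ does, so $d\rho(\mathfrak{g}_{\alpha})$ sends each $e_{j}$ to a combination of $e_{i}$ with $i < j$. Thus $d\rho(\mathfrak{n})$ is strictly upper triangular and $\rho(N)$ is upper triangular unipotent.

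The only substantive step is the first: securing a Cartan-compatible inner product on $V$, which is where one invokes admissibility of finite-dimensional representations of semisimple Lie groups. Once this is in hand, the rest is a direct unwinding of the restricted root space decomposition.
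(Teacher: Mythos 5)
Your proof is correct and follows essentially the same route as the paper: invoke the existence of a Cartan-compatible inner product (Knapp, Chapter VII, Proposition 7.9), orthogonally diagonalize $\rho(A)$ to get the restricted weight space decomposition, and order the weight vectors so that positive root vectors raise the weight, making $\rho(N)$ upper triangular unipotent. Your use of a regular element $H_0 \in \mathfrak{a}^{+}$ to totalize the weight ordering is just a concrete implementation of the paper's ``sort by non-increasing weight'' step.
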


\begin{proof}As in the proof of \cite[Proposition 7.9]{Knapp2002}, there is a basis of $V$ such that $\rho(K) \subset \operatorname{SO}_n(\mathbb R)$ and $\rho(P)$ consists of symmetric matrices. Because $A \subset P$ is commutative, by acting on the basis by a suitable element of $\operatorname{SO}_n(\mathbb R)$ we may assume that $\rho(A)$ is diagonal. Then the basis consists of restricted weight vectors. Sorting them by non-increasing weight ensures that $\rho(N)$ is upper triangular unipotent.
\end{proof}

Equip $V$ with an Iwasawa-compatible basis given by Lemma~\ref{compatibleiwasawa} and denote the corresponding Iwasawa decomposition of $\operatorname{SL}(V)$ by $N' A' K'$. Denote the $N'$-projection on $\operatorname{SL}(V)$ by $n'$. 

\begin{proof}[Proof of Theorem~\ref{npartbounded}] By the case $G = \operatorname{SL}_n(\mathbb R)$ proved in \S\ref{npartsln}, the projection $n'(\rho(gA)) \subset n'(\rho(g) A')$ is relatively compact in $N'$. It is contained in the closed set $\rho(N)$, and therefore relatively compact in $\rho(N)$. Because $\rho$ has central kernel contained in $K$, it induces an isomorphism $N \to \rho(N)$. Therefore $n(gA) \cong n'(\rho(gA))$ is relatively compact in $N$.
\end{proof}

\subsection{Uniformity}

\label{npartpartitionnoncanonical}

For $G = \operatorname{SL}_2(\mathbb R)$ it is apparent from the proof in \S\ref{npartsltwo} that uniformity holds in Theorem~\ref{npartbounded} when the entries of $g$ are bounded and both entries on the second row of $g \in G$ are bounded away from $0$. We generalize this and partition any semisimple group $G$ into subsets for which uniformity holds on compact subsets. We briefly describe one such partition here and state a result about a more optimal partition in \S\ref{npartcanonicalpartitions}.

We use the notation from \S\ref{Npartsemisimplegroup}. Let $d = \dim(V)$ and let $\langle \cdot, \cdot \rangle$ be the Euclidean inner product for the chosen basis of $V$. For $\lambda \in \mathfrak a^*$ denote by $V_\lambda$ the restricted weight space. To prove Theorem~\ref{npartbounded}, instead of reducing to the case of $\operatorname{SL}_n(\mathbb R)$ as in \S\ref{Npartsemisimplegroup}, one can also directly use the argument in \S\ref{npartsln}, but restricted to elements $g \in \rho(G)$, and this leads to a statement with uniformity. For every tuple $\mathcal S = (\mathcal S_1, \ldots,\mathcal S_d)$  ($\mathcal S$ for support) of sets of integral weights of $\mathfrak g$, we may consider the subset $\Omega_{\mathcal S}$ of $G$ that consists of the elements $g$ with the following property: For every $j \in  \{1, \ldots, d \}$, the wedge product of every last $j$ rows of $\rho(g)$, as an element of $\bigwedge^j V$, has a nonzero component precisely along the weight spaces $(\bigwedge^j V)_\lambda$ with $\lambda \in \mathcal S_j$. It is then apparent that the upper bound analogous to \eqref{Nprojtriangleinequality} is uniform for $g$ in compact subsets of $\Omega_{\mathcal S}$, because the coefficients $d_k$ (which are now norms of weight space projections) are either zero or bounded away from zero.

\subsection{Canonical partitions}

\label{npartcanonicalpartitions}

The sets $\Omega_{\mathcal S}$ in \S\ref{npartpartitionnoncanonical} depend on the choice of basis of weight vectors of $V$. Moreover, they may form a partition of $G$ that is unnecessarily fine for the uniformity statement to be true; this can happen when the weight spaces of $V$ are not 1-dimensional. It is desirable to construct a basis-independent partition, which is what we do now, and which we expect to be the coarsest possible.

We continue to use the notation from \S\ref{Npartsemisimplegroup} and consider the right action of $\operatorname{GL}(V)$ on $V$ by transposition: $vg := g^T v$. That is, in the chosen basis of $V$, the rows of $g \in \operatorname{GL}(V)$ are the images of the basis elements under the right action of $g$. (This awkward definition is an artifact of our decision to write Iwasawa decompositions as $NAK$ rather than $KAN$.) Note that for $g \in G$ we have $\rho(g)^T = \rho(\theta(g))^{-1}$, because $\rho(A)$ consists of diagonal matrices and $\rho(K) \subset \operatorname{SO}_n(\mathbb R)$. That is, while the right action of $\operatorname{GL}(V)$ by transposition depends on the choice of basis, the restriction to $G$ depends only on the choice of Iwasawa decomposition of $G$ (and in fact, only on $A$ and $K$).

We will prove Theorem~\ref{npartboundeduniform} below in a way similar to the argument sketched in \S\ref{npartpartitionnoncanonical}, but using a block-by-block rather than a row-by-row orthogonalization process, where we transform a matrix so that a block of rows (corresponding to a weight space) is orthogonal to previous blocks of rows. That the Iwasawa decomposition corresponds to such a process, is the content of the following lemma.

For integral weights $\lambda, \mu$ of $\mathfrak g$, we write $\mu < \lambda$ if $\lambda - \mu$ is nonzero and is a nonnegative integer linear combination of positive roots.

\begin{lemma}\label{nprojectionintermsoforthogonalweight}Let $g \in G$ and $n' = n'(\rho(g))$. Then for distinct weights $\lambda, \mu$ of $V$ we have $V_\lambda n'^{-1} \rho(g)\perp V_\mu n'^{-1} \rho(g)$ and $V_\lambda (n'^{-1}-1) \perp V_\lambda$.\end{lemma}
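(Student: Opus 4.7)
The plan is to read off both orthogonality statements directly from the Iwasawa factorization $\rho(g) = n' a' k'$ in $\operatorname{SL}(V)$, exploiting the two key properties of the basis chosen in Lemma~\ref{compatibleiwasawa}: (a) it consists of orthonormal weight vectors, so distinct weight spaces $V_\lambda, V_\mu$ are orthogonal and each $V_\lambda$ is spanned by a subset of basis vectors; (b) $a'$ is diagonal in this basis and hence preserves every $V_\lambda$ setwise.

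For the first claim, I would unpack the right action $v h = h^T v$ applied to $n'^{-1} \rho(g) = a' k'$:
\[
V_\lambda \, n'^{-1} \rho(g) \;=\; (a' k')^T V_\lambda \;=\; k'^T a' V_\lambda \;=\; k'^T V_\lambda,
\]
using $a' V_\lambda = V_\lambda$. Since $k'^T \in K' = \operatorname{SO}(V)$ is orthogonal and $V_\lambda \perp V_\mu$ for $\lambda \neq \mu$, the images $k'^T V_\lambda$ and $k'^T V_\mu$ remain orthogonal.

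For the second claim, the same right-action identity gives $V_\lambda (n'^{-1} - 1) = ((n'^{-1})^T - 1) V_\lambda$, so I need to show $((n'^{-1})^T - 1) V_\lambda \subseteq V_\lambda^{\perp}$. Here I would invoke the identity $\rho(g)^T = \rho(\theta(g))^{-1}$ recorded at the start of \S\ref{npartcanonicalpartitions}. Writing $n' = \rho(n)$ with $n \in N$, this gives $(n')^T = \rho(\theta(n)^{-1}) \in \rho(\theta(N))$, and hence $(n'^{-1})^T \in \rho(\theta(N))$ as well. Since $\theta(N)$ is the opposite unipotent group, generated by the negative root spaces, its image under $\rho$ minus the identity strictly lowers weights: expanding $(n'^{-1})^T = \exp(Y)$ with $Y \in \rho(\bar{\mathfrak n}) = \bigoplus_{\alpha \in \Sigma^{+}} \rho(\mathfrak g_{-\alpha})$, one sees that
\[
((n'^{-1})^T - 1) V_\lambda \;\subseteq\; \bigoplus_{\nu < \lambda} V_\nu,
\]
which is orthogonal to $V_\lambda$ by (a).

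The main obstacle is the bookkeeping rather than any substantive difficulty: one must be careful about the right-action convention $vh = h^T v$, about the transposes of the Iwasawa factors, and about identifying which unipotent subgroup of $\operatorname{SL}(V)$ contains $(n'^{-1})^T$. Once the identity $\rho(g)^T = \rho(\theta(g))^{-1}$ is used to flip $N'$ into the opposite unipotent, both claims reduce to the orthogonality of the weight space decomposition in the chosen basis.
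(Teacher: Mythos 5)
Your proof is correct and takes essentially the same route as the paper: the first claim is read off from $n'^{-1}\rho(g) = a'k'$, with $a'$ preserving each weight space and $k'$ preserving orthogonality, and the second from the fact that transposes of the relevant unipotent elements strictly lower weights, hence map $V_\lambda$ into $\bigoplus_{\mu < \lambda} V_\mu \perp V_\lambda$. Your detour through $\rho(\theta(N))$ via $\rho(g)^T = \rho(\theta(g))^{-1}$ is if anything slightly more careful than the paper's statement about all of $\operatorname{Lie}(N')$ (which genuinely needs $n' \in \rho(N)$ when weight multiplicities exceed one); just insert the one-line justification that $n'(\rho(g)) = \rho(n(g))$, which follows from Lemma~\ref{compatibleiwasawa} and uniqueness of the Iwasawa decomposition of $\operatorname{SL}(V)$.
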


\begin{proof}We have that $n'^{-1} \rho(g) \in A' K'$. The first statement follows from the facts that $V_\lambda A' = V_\lambda$, that $V_\lambda  \perp V_\mu$ and that $K'$ preserves orthogonality. For the second statement, we have for $X \in \operatorname{Lie}(N')$ that $X^T V_\lambda \in \bigoplus_{\mu < \lambda} V_\mu$, and exponentiating gives that $V_\lambda(N' -1)  \in \bigoplus_{\mu < \lambda} V_\mu$.
\end{proof}

Let $\Lambda$ be the sets of weights of $\mathfrak a$ in $V$, denote by $m_\lambda$ the multiplicity of $\lambda \in \Lambda$, and define $s_\lambda = \sum_{\mu < \lambda} m_\mu$. For any tuple $\mathcal S = (\mathcal S_\lambda)_{\lambda \in \Lambda}$ of sets of integral weights of $\mathfrak g$, consider the subset $\Omega_{\mathcal S}$ of $G$ that consists of the elements $g$ with the following property: For every $\lambda \in \Lambda$, the line $\bigwedge^{s_\lambda} \bigoplus_{\mu < \lambda} V_\mu \rho(g) \subset \bigwedge^{s_\lambda} V$ has nonzero orthogonal projection on the weight-$\alpha$ spaces $(\bigwedge^{s_\lambda} V)_\alpha$ for $\alpha \in S_\lambda$, and zero projection when $\alpha \notin \mathcal S_\lambda$.

\begin{theorem}\label{npartboundeduniform} Let $\mathcal S$ be any tuple as above, and $D \subset \Omega_{\mathcal S}$ compact. Then the $N$-projection $n(DA)$ is relatively compact.
\end{theorem}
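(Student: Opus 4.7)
I would generalize the row-by-row Gram--Schmidt argument of \S\ref{npartsln} to a block-by-block version indexed by the weights of $\rho$. As in \S\ref{Npartsemisimplegroup}, since $\rho \colon N \to \rho(N)$ is an isomorphism of Lie groups with $\rho(N)$ closed in $N'$, it suffices to show that the entries of $n' := n'(\rho(ga))$ stay in a bounded subset of $N'$, uniformly for $g \in D$ and $a \in A$. By Lemma~\ref{nprojectionintermsoforthogonalweight}, $n'^{-1}$ is characterized as the unique element of $N'$ such that, for each weight $\lambda$ of $V$, the subspace $V_\lambda n'^{-1} \rho(ga)$ is the orthogonal complement of $\bigoplus_{\mu < \lambda} V_\mu\rho(ga)$ inside $\bigoplus_{\mu \leq \lambda} V_\mu\rho(ga)$.

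Fix an orthonormal weight basis $\{e_j^\lambda\}$ of $V$. Writing $W_{<\lambda}(g) := \bigoplus_{\mu < \lambda} V_\mu \rho(g)$, the characterization above gives, for $v \in V_\lambda$,
\[
v n'^{-1} = v - \operatorname{pr}_{W_{<\lambda}(ga)}\!\bigl(v \rho(ga)\bigr) \cdot \rho(ga)^{-1}.
\]
Taking $v = e_j^\lambda$ and expanding the right-hand side in the basis $\{e_i^\mu : \mu<\lambda\}$ of $V_{<\lambda}$, the projection formula~\eqref{orthoprojectioncoeffintermsofwedge} expresses the $(i,\mu)$-coefficient (and hence, up to sign, an entry of $n'^{-1}$) as
\[
\frac{\langle \eta_{ji}^{\mu\lambda}(ga), \xi_\lambda(ga)\rangle}{\lVert \xi_\lambda(ga)\rVert^2},
\]
where $\xi_\lambda(g) := \bigwedge_{k,\,\nu<\lambda} e_k^\nu\rho(g) \in \bigwedge^{s_\lambda} V$ and $\eta_{ji}^{\mu\lambda}(g)$ is the same wedge with the factor $e_i^\mu\rho(g)$ replaced by $e_j^\lambda\rho(g)$.

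The next step is to observe that $\rho(a)$ is diagonal in the chosen weight basis of $V$, so its induced action on $\bigwedge^{s_\lambda} V$ is diagonal in the induced weight basis. Writing $a = \exp(H)$, this gives
\[
\xi_\lambda(ga) = \sum_\alpha e^{\alpha(H)} \operatorname{pr}_\alpha \xi_\lambda(g),
\]
the sum being over weights $\alpha$ of $\bigwedge^{s_\lambda} V$, and similarly for $\eta_{ji}^{\mu\lambda}(ga)$. By the definition of $\Omega_{\mathcal S}$ the weight support of $\xi_\lambda(g)$ is exactly $\mathcal S_\lambda$, so after using orthogonality of distinct weight spaces both the numerator and denominator of the ratio reduce to sums over $\alpha \in \mathcal S_\lambda$, and a triangle-inequality argument as in \eqref{Nprojtriangleinequality} combined with Cauchy--Schwarz gives
\[
\left|\frac{\langle \eta_{ji}^{\mu\lambda}(ga), \xi_\lambda(ga)\rangle}{\lVert \xi_\lambda(ga)\rVert^2}\right| \leq \max_{\alpha \in \mathcal S_\lambda} \frac{\lVert \operatorname{pr}_\alpha \eta_{ji}^{\mu\lambda}(g)\rVert}{\lVert \operatorname{pr}_\alpha \xi_\lambda(g)\rVert},
\]
a bound independent of $a$.

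On the compact set $D \subset \Omega_{\mathcal S}$ the numerator is a continuous function of $g$ and therefore bounded, while the denominator is continuous and strictly positive by the very definition of $\Omega_{\mathcal S}$, and therefore bounded below. This uniformly bounds all entries of $n'(\rho(ga))^{-1}$, hence of $n'(\rho(ga))$, and completes the proof. The main obstacle will be to cleanly set up the block Gram--Schmidt in terms of the projection-coefficient formula~\eqref{orthoprojectioncoeffintermsofwedge} and to track signs and orderings in the exterior power computations; once that is done, the rest is a direct analog of the argument of \S\ref{npartsln}.
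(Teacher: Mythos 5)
Your proposal is correct and follows essentially the same route as the paper's proof: both reduce to bounding the block entries of $n'(\rho(ga))^{-1}$ via Lemma~\ref{nprojectionintermsoforthogonalweight}, express the projection coefficients through \eqref{orthoprojectioncoeffintermsofwedge}, exploit the diagonal action of $\rho(a)$ on $\bigwedge^{s_\lambda}V$, and conclude with the triangle-inequality bound and compactness of $D$ together with the defining non-vanishing conditions of $\Omega_{\mathcal S}$. The only cosmetic difference is your extra Cauchy--Schwarz step, which replaces the paper's bound $\max_{d_\mu \neq 0}\lvert\langle c_\mu, d_\mu\rangle\rvert/\lVert d_\mu\rVert^2$ by a ratio of norms; this changes nothing of substance.
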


Before proving Theorem~\ref{npartboundeduniform}, we state some basic properties of the sets $\Omega_{\mathcal S}$.

\begin{proposition}\label{coordOmegaspartitionG}The sets of the form $\Omega_{\mathcal S}$ (or those that are non-empty) partition $G$. They are stable on the left by $NAM$. \end{proposition}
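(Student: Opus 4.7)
The plan is to handle the two claims separately. The partition statement is essentially tautological. For every $g \in G$ one reads off the canonical tuple $\mathcal S(g) = (\mathcal S_\lambda(g))_{\lambda \in \Lambda}$, where, writing $W_\lambda := \bigoplus_{\mu < \lambda} V_\mu$, the set $\mathcal S_\lambda(g)$ consists of the integral weights $\alpha$ such that the orthogonal projection of the line $\bigwedge^{s_\lambda} W_\lambda \rho(g) \subset \bigwedge^{s_\lambda} V$ onto $(\bigwedge^{s_\lambda} V)_\alpha$ is nonzero. Then $g \in \Omega_{\mathcal S}$ if and only if $\mathcal S = \mathcal S(g)$, so the non-empty sets $\Omega_{\mathcal S}$ partition $G$.

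For the $NAM$-stability I would first reduce to the following assertion: the subspace $W_\lambda$ is preserved by the right action of $\rho(NAM)$ on $V$. Granted this, $\bigwedge^{s_\lambda} W_\lambda$ is a one-dimensional subspace of $\bigwedge^{s_\lambda} V$ on which each $h \in NAM$ acts by a nonzero scalar $c(h, \lambda)$, so that $\bigwedge^{s_\lambda} W_\lambda \rho(hg) = c(h, \lambda) \cdot \bigwedge^{s_\lambda} W_\lambda \rho(g)$. Since nonzero scaling preserves the support of the weight decomposition, this gives $\mathcal S_\lambda(hg) = \mathcal S_\lambda(g)$ for every $\lambda$, and hence $h\Omega_{\mathcal S} = \Omega_{\mathcal S}$.

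It remains to verify the preservation of $W_\lambda$, which I would do factor by factor. For $a \in A$, $\rho(a) = \rho(a)^T$ is diagonal in the Iwasawa-adapted basis and acts on each $V_\mu$ by a scalar, hence preserves every weight space. For $m \in M \subset K$ we have $\rho(m)^T = \rho(m)^{-1}$ by orthogonality in this basis, and since $M$ centralizes $A$, $\rho(m)^{-1}$ preserves every $V_\mu$. For $n \in N$, the identity $\rho(n)^T = \rho(\theta(n))^{-1}$ places $\rho(n)^T$ inside $\rho(\bar N)$, with $\bar N = \theta(N)$ the opposite unipotent subgroup; its root subgroups shift weights down by positive roots, so $V_\mu \rho(n) \subset V_\mu + \bigoplus_{\nu < \mu} V_\nu$. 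Summing over $\mu < \lambda$ and invoking transitivity of $<$ gives $W_\lambda \rho(n) \subset W_\lambda$, with equality by invertibility of $\rho(n)$.

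The main point to keep straight is that the transposed right action turns the upper-triangular unipotent $\rho(N)$ into the lower-triangular $\rho(\bar N)$, so that $N$ shifts weights downward rather than upward; this is exactly what is needed for $W_\lambda$ (spanned by weights strictly below $\lambda$) to be preserved rather than strictly enlarged. Once that directional subtlety is absorbed, everything reduces to routine bookkeeping with the restricted root-space decomposition.
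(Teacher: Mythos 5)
Your proof is correct and follows essentially the same route as the paper: the partition claim is read off from the definition, and the $NAM$-stability reduces to the fact that left multiplication by $\rho(NAM)$ does not move the lines $\bigwedge^{s_\lambda}\bigoplus_{\mu<\lambda}V_\mu\,\rho(g)$, which is exactly the invariance the paper invokes. Your factor-by-factor check (diagonal $A$, weight-space-preserving $M$, and $\rho(n)^T\in\rho(\bar N)$ shifting weights downward so that $W_\lambda$ is preserved) simply makes explicit what the paper treats as clear.
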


\begin{proof}
That the $\Omega_{\mathcal S}$ partition $G$ is clear from their definition. Acting on the left by $\rho(NAM)$ on $g \in \operatorname{SL}(V)$ does not change the lines $\bigwedge^{s_\lambda} \bigoplus_{\mu < \lambda} V_\mu g$, so that the individual conditions defining $\Omega_{\mathcal S}$ are left-invariant under $NAM$.
\end{proof}

For $\lambda \in \Lambda$, define $\mathcal S^0_\lambda$ to be the set of weights $\alpha$ of $\mathfrak g$ such that the line
\[ \bigwedge^{s_\lambda} \bigoplus_{\mu < \lambda} V_\mu \rho(g) \subset \bigwedge^{s_\lambda} V \]
has nonzero projection onto the weight-$\alpha$ subspace for at least one $g \in G$. Define $\mathcal S^0 = (\mathcal S^0_\lambda)_{\lambda \in \Lambda}$. Then $\Omega_{\mathcal S^0}$ contains ``most'' elements of $G$.

\begin{proposition}\label{omegaopendense}The set $\Omega_{\mathcal S^0}$ is open and dense in $G$. \end{proposition}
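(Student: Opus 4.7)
The plan is to realize $\Omega_{\mathcal S^0}$ as a finite intersection of open dense subsets of $G$ and invoke that $G$ is a Baire space.

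First I would unpack the definition. For each $\lambda \in \Lambda$ and each weight $\alpha$ of $\mathfrak g$, let $U_{\lambda, \alpha} \subset G$ denote the locus where the line $\bigwedge^{s_\lambda} \bigoplus_{\mu < \lambda} V_\mu \rho(g)$ has nonzero projection onto $(\bigwedge^{s_\lambda} V)_\alpha$. By the very definition of $\mathcal S^0_\lambda$, the set $U_{\lambda, \alpha}$ is empty precisely when $\alpha \notin \mathcal S^0_\lambda$. Hence the ``zero projection for $\alpha \notin \mathcal S^0_\lambda$'' conditions appearing in the definition of $\Omega_{\mathcal S^0}$ hold automatically on all of $G$ and can be discarded, giving
\[ \Omega_{\mathcal S^0} \;=\; \bigcap_{\lambda \in \Lambda} \bigcap_{\alpha \in \mathcal S^0_\lambda} U_{\lambda, \alpha} \,. \]
This is a finite intersection, since $\Lambda$ and each $\mathcal S^0_\lambda$ lie inside the finite set of weights of $\bigwedge^{s_\lambda} V$.

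Next I would show each $U_{\lambda, \alpha}$ is open. Fix a basis $e_1, \dots, e_{s_\lambda}$ of $\bigoplus_{\mu < \lambda} V_\mu$ and consider the real-analytic map
\[ \Phi_\lambda : G \to \bigwedge^{s_\lambda} V \,, \quad g \mapsto (e_1 \rho(g)) \wedge \cdots \wedge (e_{s_\lambda}\rho(g)) \,. \]
Since $\rho(g)$ is invertible, $\Phi_\lambda$ never vanishes and spans the line in question. If $\pi_{\lambda, \alpha}$ denotes the orthogonal projection onto $(\bigwedge^{s_\lambda} V)_\alpha$, then $U_{\lambda, \alpha}$ is the non-vanishing locus of the real-analytic vector-valued function $\pi_{\lambda, \alpha} \circ \Phi_\lambda$, hence open. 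By definition of $\mathcal S^0_\lambda$, for $\alpha \in \mathcal S^0_\lambda$ this function is not identically zero on $G$.

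The main point, and only non-routine step, is density: I would invoke that $G$ is a connected real-analytic manifold, and hence a real-analytic function on $G$ that is not identically zero has zero set with empty interior. This makes each $U_{\lambda, \alpha}$ open and dense, and a finite intersection of open dense sets in the Baire space $G$ is again open and dense. After this analytic identity argument, the rest is bookkeeping.
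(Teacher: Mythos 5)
Your proof is correct. The reduction is sound: by the definition of $\mathcal S^0_\lambda$, the vanishing conditions for $\alpha \notin \mathcal S^0_\lambda$ hold identically on $G$, so $\Omega_{\mathcal S^0}$ is indeed the finite intersection of the non-vanishing loci $U_{\lambda,\alpha}$ with $\alpha \in \mathcal S^0_\lambda$; each $\pi_{\lambda,\alpha}\circ\Phi_\lambda$ is real-analytic (as $\rho$ is an analytic homomorphism and the wedge and projection are polynomial/linear), is not identically zero precisely because $\alpha \in \mathcal S^0_\lambda$, and since $G$ is connected (semisimple groups in this paper are connected by convention) the identity principle gives that its zero set has empty interior. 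Where you differ from the paper is in the mechanism for density: the paper passes to $\rho(G)$, quotes that it is the identity component of the real points of a closed real algebraic subgroup of $\operatorname{SL}(V)$, and observes that $\rho(\Omega_{\mathcal S^0})$ is cut out by the non-vanishing of finitely many rational functions, hence Zariski open and therefore dense. Your analytic-identity argument is more elementary and self-contained (no algebraicity input or citation needed), while the paper's version records the extra structural fact that the complement is contained in a proper Zariski-closed subset, which is what underlies the ``Zariski open generic set versus lower-dimensional exceptional sets'' picture described in the introduction; either formulation also yields that the complement has measure zero. Two cosmetic remarks: the appeal to Baire is unnecessary, since a finite intersection of open dense sets is open and dense by a direct argument; and to apply the identity principle to the vector-valued function you can simply note that some component, or the real-analytic function $\lVert \pi_{\lambda,\alpha}(\Phi_\lambda(g))\rVert^2$, is not identically zero.
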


\begin{proof}
 The image $\rho(G)$ is the identity component of the real points of a real algebraic closed subgroup of $\operatorname{SL}(V)$ \cite[\S3.3.3]{onishchik1980}. The set $\rho(\Omega_{\mathcal S^0})$ is defined by the non-vanishing of finitely many rational functions (corresponding to projections onto weight spaces) and therefore Zariski open in $\rho(G)$. (There are no vanishing conditions when $\mathcal S = \mathcal S^0$, or rather, they are satisfied on all of $\rho(G)$.) It follows that $\Omega_{\mathcal S^0}$ is open and dense in $G$.
\end{proof}

\begin{example}Take $G = \operatorname{SL}_2(\mathbb R)$ and $\rho = \operatorname{Std}$ the standard representation. Then $\Omega_{\mathcal S^0} = G - NAM'$, because the only defining condition is that both entries on the bottom row are nonzero. The only other non-empty sets of the form $\Omega_{\mathcal S}$ are $NAM$ and $NAM \cdot\begin{psmallmatrix}
0 & 1 \\
-1 & 0
\end{psmallmatrix}$. The geodesics in $G/K$ corresponding to elements of $\Omega_{\mathcal S^0}$ are semicircles. The other $\Omega_{\mathcal S}$ give rise to the vertical geodesics, with both orientations. One can check using an explicit computation that $\rho = \operatorname{Ad}$ gives the same partition of $\operatorname{SL}_2(\mathbb R)$. %var('u1, u2, v1, v2')
%g = matrix([[u1, u2], [v1, v2]])
%e = matrix([[0, 1], [0, 0]])
%f = matrix([[0, 0], [1, 0]])
%h = matrix([[1, 0], [0, -1]])
%print(g*e*g.adjugate())
%print(g*f*g.adjugate())
%print(g*h*g.adjugate())
\end{example}

\begin{example}\label{omegaslthree}Take $G = \operatorname{SL}_3(\mathbb R)$ and $\rho = \operatorname{Std}$. The sets $\Omega_{\mathcal S}$ are determined by their $K$-projection, in view of Lemma~\ref{coordOmegaspartitionG}. The sets $\kappa(\Omega_{S})$ come in all possible dimensions: The $0$-dimensional ones, which are the six right cosets of $M$ in $M'$. The $1$-dimensional ones: the three right $M'$-translates of $G \cap (\operatorname{O}(2) \times \operatorname{O}(1)) - M'$ and the three right $M'$-translates of $G \cap (\operatorname{O}(1) \times \operatorname{O}(2)) - M'$. The $2$-dimensional ones: the three right $M'$-translates of the product $(G  \cap (\operatorname{O}(2) \times \operatorname{O}(1)) - M' )(G \cap (\operatorname{O}(1) \times \operatorname{O}(2)) - M')$ and the three right $M'$-translates of that product with the two factors interchanged. And finally, the dense open set $\kappa(\Omega_{S^0})$.\end{example}

\begin{remark}For any $G$, the set $\kappa(\Omega_{S^0})$ is disjoint from the sets $LM'$ with $L \in \mathcal L$ a standard Levi subgroup of a semistandard parabolic; this follows already from the constraint corresponding to the minimal weights $\lambda$. But in general it is smaller than just the complement of the sets of the form $K \cap L M'$, as illustrated in Example~\ref{omegaslthree}.

It remains unclear to us whether the partition of $G$ into the $\Omega_{\mathcal S}$ depends on the choice of representation $\rho$, and if not, whether the partition is the coarsest possible (up to splitting into connected components) for uniformity to hold. It would also be desirable to have a concrete description of the partition in general, as we do have when $G = \operatorname{SL}_3(\mathbb R)$.\end{remark}

\begin{proof}[Proof of Theorem~\ref{npartboundeduniform}] Let $g \in G$ and $a \in A$. We view $n'(\rho(ga))^{-1}$ as a block matrix: For weights $\lambda \geq \mu$ of $V$, define
\begin{align*}
T_{\lambda, \mu} : V_\lambda &\to V_\mu \\
v & \mapsto \operatorname{pr}_{V_\mu} \left( v \cdot n'(\rho(ga))^{-1} \right)
\end{align*}
and define $T_\lambda = \sum_{\mu < \lambda} T_{\lambda, \mu} : V_\lambda \to \bigoplus_{\mu < \lambda} V_\mu$. We must show that each $T_\lambda$ is a bounded operator, uniformly in $a \in A$ and $g$ in compact subsets of each $\Omega_{\mathcal S}$. By Lemma~\ref{nprojectionintermsoforthogonalweight}, the map $T_\lambda$ satisfies
\[ ((1+ T_{\lambda}) V_\lambda) \rho(ga) \perp \bigoplus_{\mu < \lambda} V_\mu  \rho(ga) \,. \]
That is, $(T_{\lambda} v)  \rho(ga) = - \operatorname{pr}_{\bigoplus_{\mu < \lambda} V_\mu \rho(ga)} (v \rho(ga) )$. (These two identities express the block-by-block orthogonalization process.) To show that $T_\lambda$ is bounded, we may fix any basis $(b_i)_{1 \leq i \leq s_\lambda}$ of $\bigoplus_{\mu < \lambda} V_\mu $ and we must show that for every $v \in V_\lambda$ the coordinates of the orthogonal projection of $v \rho(ga)$ onto $\bigoplus_{\mu < \lambda} V_\mu \rho(ga)$ in the basis $(b_i \rho(ga))$ are bounded. By \eqref{orthoprojectioncoeffintermsofwedge}, the $i$th coordinate is equal to
\begin{align*}
\frac{\langle (b_1 \wedge \cdots \wedge \widehat{b_i} \wedge v\wedge \cdots \wedge b_{s_\lambda}) \rho(ga), (b_1 \wedge \cdots \wedge  b_{s_\lambda}) \rho(ga) \rangle}{\left\lVert (b_1 \wedge \cdots \wedge b_{s_\lambda})\rho(ga) \right\rVert^2} \,,
\end{align*}
where the hat denotes omission. We may bound this using an inequality similar to \eqref{Nprojtriangleinequality}. Concretely, call $d_\mu$ the projection of $(b_1 \wedge \cdots \wedge b_{s_\lambda})\rho(g)$ onto the weight-$\mu$ subspace of $\bigwedge ^{s_\lambda}V$, and $c_\mu$ that of $(b_1 \wedge \cdots \wedge \widehat{b_i} \wedge v\wedge \cdots \wedge b_{s_\lambda}) \rho(g)$. Then the fraction above equals
\[
\frac{\sum_{\mu} \mu(a)^2 \langle c_\mu, d_\mu \rangle}{\sum_{\mu} \mu(a)^2 \lVert d_\mu \rVert^2}\,, \]
where we define the weights on $A$ using the exponential map. By the triangle inequality, this is at most
\[ \max_{d_\mu \neq 0} \frac{\lvert\langle c_\mu, d_\mu \rangle \rvert}{\lVert d_\mu \rVert^2} \,,\]
which gives a uniform upper bound for $g$ in compact subsets of each $\Omega_{\mathcal S}$, because the $d_\mu$ are then either zero or are bounded away from zero.
\end{proof}

\section{The \texorpdfstring{$A$}{A}-projection and extreme points}
\label{secAprojection}

In this subsection we prove Theorem~\ref{aresultsallintro}. Unless otherwise stated, $G$ is a semisimple Lie group as in \S\ref{notationliegroups}. For $H_0 \in \mathfrak a$ and $g \in G$ define
\begin{align*}
h_{H_{0}, g} : A & \to \mathbb{R} \\
a & \mapsto \langle H_{0}, H(g a) \rangle \,.
\end{align*}
When $H_0$ corresponds to $\lambda \in \mathfrak a^*$ under the isomorphism given by the Killing form, this is precisely the function $h_{\lambda , g}$ in Theorem~\ref{aresultsallintro}. Recall from \S\ref{defgenericset} that $\lambda$ is nonsingular if and only if $H_0$ is, $\lambda$ lies in the positive chamber of $\mathfrak a^*$ if and only if $H_0 \in \mathfrak a^{+}$, and $\lambda$ does not lie in a proper subspace spanned by roots if and only if $H_0 \notin \bigcup_{L \in \mathcal L - \{ G\}} \mathfrak a^L$. Thus the condition on $\lambda$ in Theorem~\ref{aresultsallintro} is equivalent to $H_0 \in \mathfrak a^{\operatorname{gen}, +}$.

The first part of Theorem~\ref{aresultsallintro}, concerning uniqueness and nondegeneracy, is proved in \S\ref{sectionuniqueness}; see Corollary~\ref{criticalpointshnondegenerate} and Proposition~\ref{uniquenesscriticalpointcartan}. The second part of Theorem~\ref{aresultsallintro}, concerning existence, is proved in \S\ref{sectionstructurelevelsets} (although the hard work is done in \S\ref{sectionexistence}); see Corollary~\ref{levelsetGcodim} and Proposition~\ref{regularsetGopen}.

\subsection{Critical points and level sets}

Using Lemma~\ref{computationAfirstderivative} we find that the differential of $h_{H_{0}, g}$ at $a \in A$ is
\begin{align} \label{cartanorbitdifferential}
\begin{split}
(Dh_{H_{0}, g})_{a} : \mathfrak{a} & \to \mathbb{R} \\
H & \mapsto \langle H_{0}, \operatorname{Ad}_{\kappa(ga)}(H) \rangle \,.
\end{split}
\end{align}
When $G$ is any reductive group with all associated data as in \S \ref{notationliegroups}, and $H_{0} \in \mathfrak{a}$, define the set $\mathcal C(G, H_0)$ as follows:
\begin{equation}\label{definitionCreductive}
\mathcal{C}(G, H_{0}) = \{ k \in K : \operatorname{Ad}_{k^{-1}} (H_{0}) \perp \mathfrak{a} \} \,.
\end{equation}
We allow $G$ to be reductive because we will occasionally need to work with the sets $\mathcal C(L, H_0)$ for $L \in \mathcal L$.

Now let $G$ again be semisimple. The following two lemmas are clear from \eqref{cartanorbitdifferential} and the definition of $\mathcal{C}(G, H_{0})$.

\begin{lemma} \label{criticalpointsintermsofC}Let $H_{0} \in \mathfrak{a}$ and $g \in G$. Then $a \in A$ is a critical point of $h_{H_{0}, g}$ if and only if $\kappa(ga) \in \mathcal{C}(G, H_{0})$. \hfill $\square$\end{lemma}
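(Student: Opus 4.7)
The plan is to unwind the definitions on both sides and match them via the Ad-invariance of $\langle \cdot, \cdot \rangle$. By \eqref{cartanorbitdifferential}, the point $a \in A$ is a critical point of $h_{H_0, g}$ if and only if
\[ \langle H_0, \operatorname{Ad}_{\kappa(ga)}(H) \rangle = 0 \quad \text{for all } H \in \mathfrak{a}. \]
Using Ad-invariance of $\langle \cdot, \cdot \rangle$ (it is Ad-invariant on $\mathfrak{g}_{ss}$ as the Killing form, and the extension to $\mathfrak{z}(\mathfrak{g})$ is trivially Ad-invariant since $\mathfrak{z}(\mathfrak{g})$ is pointwise fixed), I can rewrite this as $\langle \operatorname{Ad}_{\kappa(ga)^{-1}}(H_0), H \rangle = 0$ for all $H \in \mathfrak{a}$, i.e.\ $\operatorname{Ad}_{\kappa(ga)^{-1}}(H_0) \perp \mathfrak{a}$ with respect to $\langle \cdot, \cdot \rangle$.

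The only (minor) point of care is that $\mathcal{C}(G, H_0)$ is defined via orthogonality with respect to $\langle \cdot, \cdot \rangle_\theta$ rather than $\langle \cdot, \cdot \rangle$. But for any $X \in \mathfrak{g}$ and $H \in \mathfrak{a}$, one has $\langle X, H \rangle_\theta = \langle X, -\theta(H) \rangle = \langle X, H \rangle$ because $\mathfrak{a} \subset \mathfrak{p}$ and $\theta$ acts as $-1$ on $\mathfrak{p}$. Hence the two notions of ``orthogonal to $\mathfrak{a}$'' coincide, which closes the equivalence and yields $\kappa(ga) \in \mathcal{C}(G, H_0)$.

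There is no real obstacle here; the argument is a one-line manipulation, and the statement is explicitly flagged by the author as ``clear''. The only thing to watch out for is the convention mismatch between $\langle \cdot, \cdot \rangle$ and $\langle \cdot, \cdot \rangle_\theta$ in the definition of $\mathcal{C}(G, H_0)$, which is harmless on $\mathfrak{a}$ as noted above.
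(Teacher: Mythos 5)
Your proof is correct and is essentially the paper's argument: the paper treats this lemma as immediate from \eqref{cartanorbitdifferential} and the definition \eqref{definitionCreductive}, and you have simply spelled out that step via Ad-invariance of the form. Your side remark about $\langle \cdot,\cdot\rangle$ versus $\langle \cdot,\cdot\rangle_\theta$ is accurate (they agree whenever one argument lies in $\mathfrak p$, in particular on $\mathfrak a$) and harmless.
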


\begin{lemma}\label{levelsetszerocritpoint}The set of $k \in K$ for which $1$ is a critical point of $h_{H_0, k}$ equals $\mathcal C(G, H_0)$. More generally, the set of $k \in K$ for which $a \in A$ is a critical point of $h_{H_0, k}$ equals $\kappa(\mathcal C(G, H_0)a^{-1})$.\hfill $\square$\end{lemma}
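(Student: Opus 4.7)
The plan is to deduce both claims directly from Lemma~\ref{criticalpointsintermsofC}, which characterizes critical points via the $K$-projection, together with uniqueness in the Iwasawa decomposition. Essentially nothing needs to be proved that is not already packaged in that lemma; the point is just to rewrite the condition in the stated form.

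First I would handle the case $a = 1$. By Lemma~\ref{criticalpointsintermsofC} applied with $g = k \in K$, the element $1 \in A$ is a critical point of $h_{H_0, k}$ precisely when $\kappa(k \cdot 1) \in \mathcal C(G, H_0)$. Since $k \in K$ we have $\kappa(k) = k$, so this condition reduces to $k \in \mathcal C(G, H_0)$, giving the first statement.

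For the general $a \in A$ case, Lemma~\ref{criticalpointsintermsofC} tells us the condition is $\kappa(ka) \in \mathcal C(G, H_0)$. Equivalently, there exists $c \in \mathcal C(G, H_0)$ with $ka \in NA \cdot c$ by uniqueness of the Iwasawa decomposition $G = NAK$, i.e.\ $k \in NA \cdot ca^{-1}$. Taking the $K$-projection of both sides (and using $k \in K$) yields $k = \kappa(ca^{-1})$, so the set of admissible $k$ is exactly $\{\kappa(ca^{-1}) : c \in \mathcal C(G, H_0)\} = \kappa(\mathcal C(G, H_0)\, a^{-1})$, which is the second statement.

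There is no real obstacle here: both assertions are bookkeeping on top of Lemma~\ref{criticalpointsintermsofC} and \eqref{cartanorbitdifferential}, which is consistent with the ``$\square$'' that follows the statement and the author's comment that it is clear from the preceding definitions.
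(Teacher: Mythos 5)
Your argument is correct and is exactly the bookkeeping the paper has in mind: the lemma is stated without proof, as an immediate consequence of Lemma~\ref{criticalpointsintermsofC}, \eqref{cartanorbitdifferential} and uniqueness in the Iwasawa decomposition. The only point worth a half-sentence is that your final step is reversible --- given $c \in \mathcal C(G,H_0)$, writing $ca^{-1} = n a' \,\kappa(ca^{-1})$ shows $\kappa(ca^{-1})\,a \in NA\,c$, hence $\kappa\bigl(\kappa(ca^{-1})\,a\bigr) = c$ --- which supplies the reverse inclusion and hence the asserted set equality.
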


In view of Lemma~\ref{levelsetszerocritpoint}, we refer to the sets $\kappa(\mathcal C(G, H_0)a^{-1})$ as level sets. This will be fully justified after we show uniqueness of critical points in Proposition~\ref{uniquenesscriticalpointcartan}.

By decomposing $g$ in the Iwasawa decomposition we see that
\begin{equation}\label{heightfunctionreducetokpart}
h_{H_{0}, g}(a) = \langle H_{0}, H(g) \rangle + h_{H_{0}, \kappa(g)}(a) \,. 
\end{equation}
The first term in the right hand side being a mere constant, we see that the critical points of $h_{H_0, g}$ coincide with those of $h_{H_0, \kappa(g)}$. It therefore suffices to understand the critical points of $h_{H_{0}, k}$ for $k \in K$.

\begin{example} When $G = \operatorname{PSL}_2(\mathbb R)$ it is clear from the geometric picture in the introduction that a critical point of $h_{H_{0}, g}$ is a point $a \in A$ such that the tangent line to $gAi$ at $gai$ is horizontal. This is the content of the above Lemma~\ref{criticalpointsintermsofC} in this case. The set $\mathcal{C}(G, H_{0})$ consists of the two elements
\begin{align*}
c_{\pm} = \begin{pmatrix}
\cos(\pi /4) & \pm \sin(\pi /4) \\
\mp \sin(\pi /4) & \cos(\pi /4)
\end{pmatrix}
\end{align*}
corresponding to the two horizontal directions. See Figure~\ref{sltwocpicture} for a picture.
\end{example}

\begin{figure}[h!]
\begin{center}
\captionsetup{justification=centering}
\includegraphics[scale=1,angle=0]{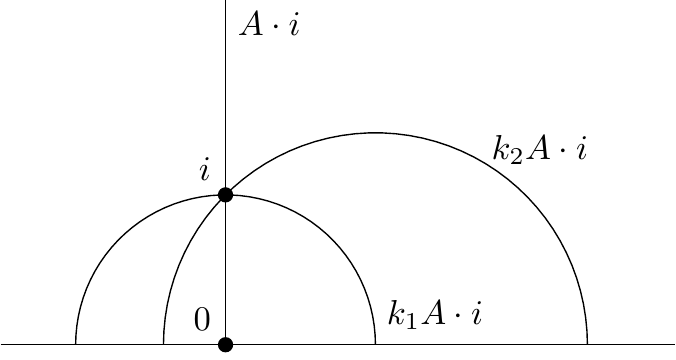}
\caption{Geodesics corresponding to an element $k_1 \in \mathcal C(G, H_0)$, and to an element $k_2$ that does not lie in $\mathcal C(G, H_0)$.}
\label{sltwocpicture}
\end{center}
\end{figure}

\begin{example}\label{examplemidpointdepends} Let $G = \operatorname{SL}_3(\mathbb R)$. Take elements $H_0, H_0' \in \mathfrak a$ that are not proportional, and take $k\in K$. We claim that $a \in A$ can never be a critical point of both $h_{H_0, k}$ and $h_{H_0', k}$. Indeed, replacing $k$ by $\kappa(ka)$ we may assume that $a = 0$, and that $k \in \mathcal C(G, H_0) \cap \mathcal C(G, H_0')$. That is, the symmetric matrices $\operatorname{Ad}_{k^{-1}}(H_0)$ and $\operatorname{Ad}_{k^{-1}}(H_0')$ have zeros on the diagonal. Because $\mathfrak a$ is $2$-dimensional the same is then true for any $H_0'' \in \mathfrak a$. Take now $H_0'' = H_0^2$. The matrix $\operatorname{Ad}_{k^{-1}}(H_0'') = \operatorname{Ad}_{k^{-1}}(H_0)\operatorname{Ad}_{k^{-1}}(H_0)^T$ has zeros on the diagonal on the one hand, but its trace is the sum of squares of the entries of $H_0$ on the other hand. This is a contradiction.\end{example}

\subsection{Existence of critical points}

\label{sectionexistence}

We want to show that there exists $k \in K$ with the property that $h_{H_0, k}$ has a critical point. Equivalently, in view of Lemma~\ref{levelsetszerocritpoint}, that the set $\mathcal C(G, H_0)$ is nonempty. Before proving that this is indeed the case for generic $H_0 \in\mathfrak a$, we need a negative result.

When $k \in K$ centralizes a nonzero subspace $V \subset \mathfrak a$, then $a \mapsto H(ka)$ grows linearly in the directions of $V$. In particular, for $h_{H_0, k}$ to have a critical point, $H_0$ must be orthogonal to $V$. Those critical points behave badly, and this is the reason to impose that $H_0 \notin \bigcup_{L \in \mathcal L - \{ G\}} \mathfrak a^{L}$ in Theorem~\ref{aresultsallintro}. We make this more precise in Lemma~\ref{mLnocriticalpoints}.

\begin{lemma} \label{kpartofmL}When $L \in \mathcal{L}$ and $m \in M'$, we have $\kappa(mL) \subset mL$.\end{lemma}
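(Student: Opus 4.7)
My plan is to reduce the statement to the case $m = e$, i.e.\ to show that $\kappa(L) \subset L$ for every $L \in \mathcal{L}$, and then derive the reduced statement from uniqueness of the Iwasawa decomposition in $G$.

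For the reduction, I would use that $m \in M' = N_K(A)$ normalizes $A$, and that any $L \in \mathcal{L}$ is the centralizer in $G$ of some subgroup $S \subset A$. Then $L' := mLm^{-1} = Z_G(mSm^{-1})$ is the centralizer of another subgroup of $A$, so $L' \in \mathcal{L}$. Any element of $mL$ can be written as $m\ell = (m \ell m^{-1}) m = \ell' m$ with $\ell' \in L'$, and since $m \in K$, right-$K$-equivariance of the Iwasawa $K$-projection gives $\kappa(\ell' m) = \kappa(\ell') m$. Granting the reduced statement applied to $L'$, we would obtain $\kappa(\ell') \in L'$, hence $\kappa(m\ell) \in L' m = mL$.

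To prove the reduced statement, I would use that $L$ is reductive and inherits its Iwasawa data from $G$ compatibly: its maximal $\mathbb{R}$-split torus is $A$, its maximal compact subgroup is $K_L = K \cap L$, and its unipotent subgroup for the induced positive system $\Sigma^+ \cap \Sigma_L$ is
\[ N_L = \exp\Bigl(\bigoplus_{\alpha \in \Sigma^+ \cap \Sigma_L} \mathfrak g_\alpha\Bigr) \subset N. \]
Every $\ell \in L$ then admits an Iwasawa decomposition $\ell = n_\ell a_\ell k_\ell$ in $L$ with $n_\ell \in N_L$, $a_\ell \in A$, $k_\ell \in K_L$; this is \emph{a fortiori} a valid $NAK$ factorization in $G$, and by uniqueness of the Iwasawa decomposition of $G$ it must coincide with the factorization $\ell = n(\ell)a(\ell)\kappa(\ell)$. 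In particular $\kappa(\ell) = k_\ell \in K_L \subset L$, as required.

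There is no serious obstacle: the key observation is simply that the $N$-, $A$- and $K$-components of $L$ all sit inside the corresponding components of $G$, which forces the Iwasawa decompositions in $L$ and in $G$ to agree and delivers the desired containment $\kappa(L) \subset L$.
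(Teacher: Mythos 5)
Your proof is correct and follows essentially the same route as the paper: reduce to the conjugated Levi $L' = mLm^{-1} \in \mathcal{L}$ via right-$K$-equivariance of $\kappa$, then use that a semistandard Levi satisfies $\kappa(L') \subset L'$. The only difference is that the paper cites \cite[Propositions 7.25 and 7.31]{Knapp2002} for this last fact, whereas you unfold it by observing that the Iwasawa components $N_{L'} \subset N$, $A \subset L'$, $K \cap L' \subset K$ of $L'$ sit inside those of $G$, so uniqueness of the $NAK$ decomposition forces the two factorizations to agree.
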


\begin{proof}
It follows from \cite[Proposition 7.25, Proposition 7.31]{Knapp2002} that when $L' \subset G$ is a semistandard Levi subgroup, one has $\kappa(L') \subset L'$. We may apply this to $L' = m L m^{-1}$.
\end{proof}

\begin{proposition} \label{mLnocriticalpoints}When $H_{0} \notin \bigcup_{L \in \mathcal L - \{ G\}}\mathfrak{a}^{L}$ and $k \in \bigcup_{L \in \mathcal L - \{ G\}} M' L$, the function $h_{H_{0}, k}$ has no critical point.
\end{proposition}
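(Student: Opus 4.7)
The plan is to show that if $a \in A$ were a critical point of $h_{H_0,k}$, then $H_0$ would have to lie in $\mathfrak{a}^{L'}$ for some proper $L' \in \mathcal{L}-\{G\}$, contradicting the hypothesis on $H_0$. By Lemma~\ref{criticalpointsintermsofC}, the critical-point condition is $\kappa(ka) \in \mathcal{C}(G, H_0)$. I would fix a factorization $k = m_0 \ell_0$ with $m_0 \in M'$ and $\ell_0 \in L$ for some $L \in \mathcal{L}-\{G\}$. Since $A \subset L$ (as $L$ is the centralizer in $G$ of a subset of $\mathfrak{a}$), one has $ka \in m_0 L$, so Lemma~\ref{kpartofmL} gives $\kappa(ka) = m_0 \ell$ for some $\ell \in L$. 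Setting $H_0' := \operatorname{Ad}_{m_0^{-1}}(H_0)$, which lies in $\mathfrak{a}$ because $m_0 \in M' = N_K(A)$ normalizes $\mathfrak{a}$, the condition $\operatorname{Ad}_{\kappa(ka)^{-1}}(H_0) \perp \mathfrak{a}$ becomes $\operatorname{Ad}_{\ell^{-1}}(H_0') \perp \mathfrak{a}$.

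The heart of the argument is to exploit the orthogonal decomposition $\mathfrak{a} = \mathfrak{a}^L \oplus \mathfrak{a}_L$ together with the fact that $\mathfrak{a}_L \subset \mathfrak{z}(\mathfrak{l})$ is fixed pointwise by $\operatorname{Ad}_L$. Writing $H_0' = X + Y$ with $X \in \mathfrak{a}^L$ and $Y \in \mathfrak{a}_L$ yields
\[ \operatorname{Ad}_{\ell^{-1}}(H_0') = \operatorname{Ad}_{\ell^{-1}}(X) + Y, \]
and $\operatorname{Ad}_{\ell^{-1}}(X) \in \mathfrak{l}_{ss}$ since $\mathfrak{l}_{ss}$ is $\operatorname{Ad}_L$-invariant. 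Pairing with an arbitrary $H \in \mathfrak{a}_L$ and invoking the orthogonality $\mathfrak{l}_{ss} \perp \mathfrak{z}(\mathfrak{l})$ under $\langle\cdot,\cdot\rangle$ eliminates the first term, leaving $\langle Y, H\rangle = 0$ for all $H \in \mathfrak{a}_L$; nondegeneracy of $\langle\cdot,\cdot\rangle$ on $\mathfrak{a}$ then forces $Y = 0$, so $H_0' \in \mathfrak{a}^L$.

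Finally, transporting back, $H_0 = \operatorname{Ad}_{m_0}(H_0') \in \operatorname{Ad}_{m_0}(\mathfrak{a}^L) = \mathfrak{a}^{m_0 L m_0^{-1}}$, and $m_0 L m_0^{-1}$ is again a proper element of $\mathcal{L}$ because $m_0 \in N_K(A)$ permutes the standard Levis. This contradicts $H_0 \notin \bigcup_{L' \in \mathcal{L}-\{G\}} \mathfrak{a}^{L'}$, completing the proof. The one step requiring care is the orthogonality $\mathfrak{l}_{ss} \perp \mathfrak{z}(\mathfrak{l})$ in the form $\langle\cdot,\cdot\rangle$; I expect this to follow by orthogonally decomposing $\mathfrak{z}(\mathfrak{l}) = \mathfrak{z}(\mathfrak{g}) \oplus (\mathfrak{z}(\mathfrak{l}) \cap \mathfrak{g}_{ss})$, noting that elements of the second summand still centralize all of $\mathfrak{l}$, and then applying $\operatorname{ad}$-invariance of the Killing form on $\mathfrak{g}_{ss}$ to write a typical element of $\mathfrak{l}_{ss}$ as a commutator $[A,B]$ with $A,B \in \mathfrak{l}$.
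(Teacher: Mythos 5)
Your argument is correct and follows essentially the same route as the paper: write $\kappa(ka) = m_0\ell$ with $\ell \in L$ via Lemma~\ref{kpartofmL}, test the critical-point condition against $\mathfrak{a}_L$, and conclude $H_0 \in \mathfrak{a}^{\operatorname{Ad}_{m_0}(L)}$, contradicting the hypothesis. The only difference is cosmetic: since $\operatorname{Ad}_\ell$ fixes $\mathfrak{a}_L$ pointwise, the paper reads off $H_0 \perp \operatorname{Ad}_{m_0}(\mathfrak{a}_L) = \mathfrak{a}_{\operatorname{Ad}_{m_0}(L)}$ directly from \eqref{cartanorbitdifferential}, so your (correct) detour through the decomposition $H_0' = X + Y$ and the orthogonality $\mathfrak{l}_{ss} \perp \mathfrak{z}(\mathfrak{l})$ is not needed.
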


\begin{proof}Assume $a \in A$ is a critical point of $h_{H_{0}, k}$, and let $m \in M'$ and $L \in \mathcal L$ be such that $k \in mL$. By Lemma~\ref{kpartofmL} we also have $\kappa(ka) \in mL$. From \eqref{cartanorbitdifferential}, letting $H$ vary in $\mathfrak a_L$, we see that $H_0 \perp \operatorname{Ad}_m(\mathfrak a_L) = \mathfrak a_{\operatorname{Ad}_m(L)}$. That is, $H_0 \in \mathfrak a^{\operatorname{Ad}_m(L)}$, which contradicts our assumption.
\end{proof}

An equivalent formulation of Proposition~\ref{mLnocriticalpoints} is the following.

\begin{lemma}\label{CdoesnotmeetML} When $H_{0} \notin \bigcup_{L \in \mathcal L - \{ G\} }\mathfrak{a}^{L}$, the set $\mathcal{C}(G, H_{0})$ does not meet the set $\bigcup_{L \in \mathcal L - \{ G\}} M' L$.\end{lemma}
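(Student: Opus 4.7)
The plan is to derive this as an immediate consequence of Proposition~\ref{mLnocriticalpoints} combined with the critical-point characterization in Lemma~\ref{criticalpointsintermsofC}. Suppose for contradiction that there exist $L \in \mathcal{L} - \{G\}$ and $k \in \mathcal{C}(G, H_0) \cap M' L$. Since $\kappa(k \cdot 1) = k \in \mathcal{C}(G, H_0)$, Lemma~\ref{criticalpointsintermsofC} applied with $g = k$ and $a = 1$ shows that $1 \in A$ is a critical point of $h_{H_0, k}$. On the other hand, $k \in M' L$ with $L$ a proper element of $\mathcal{L}$, so Proposition~\ref{mLnocriticalpoints} prevents $h_{H_0, k}$ from having any critical point whatsoever. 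This contradiction forces $\mathcal{C}(G, H_0) \cap M' L = \emptyset$ for every proper $L \in \mathcal{L}$.

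For the converse implication, which justifies the author's description of the two statements as ``equivalent'', given $k \in M' L$ and a critical point $a \in A$ of $h_{H_0, k}$, one notes that $ka \in M' L A = M' L$ since $A \subset L$, so Lemma~\ref{kpartofmL} yields $\kappa(ka) \in M' L$; combined with $\kappa(ka) \in \mathcal{C}(G, H_0)$ from Lemma~\ref{criticalpointsintermsofC}, this would produce a point in $\mathcal{C}(G, H_0) \cap M' L$, contradicting the conclusion of the lemma.

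I do not anticipate any real obstacle here: the lemma is a verbatim translation of Proposition~\ref{mLnocriticalpoints} once critical points are identified with the preimage of $\mathcal{C}(G, H_0)$ under $a \mapsto \kappa(ka)$, and the only small point of care is that $M' L$ is stable under the operations $a \mapsto ka$ and then $\kappa$, which is exactly what Lemma~\ref{kpartofmL} provides.
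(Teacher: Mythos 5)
Your proof is correct and is essentially identical to the paper's: the paper also deduces the lemma from Proposition~\ref{mLnocriticalpoints} by noting that any $k \in \mathcal{C}(G, H_0)$ admits $1$ as a critical point of $h_{H_0, k}$. The converse direction you include (via Lemma~\ref{kpartofmL}) is not needed for the stated lemma but correctly justifies the ``equivalent formulation'' remark.
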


\begin{proof}
The elements $k \in \mathcal{C}(G, H_{0})$ have the property that $h_{H_0, k}$ has a critical point, namely $1$.
\end{proof}

When $G$ is reductive, the set $\mathcal C(G, H_0)$ is trivially empty when $H_{0}$ has a component along $Z(\mathfrak{g})$. That is, when $H_0 \notin \mathfrak g_{ss}$. In particular, when $L$ is a semistandard Levi subgroup of $G$, the set $\mathcal{C}(L, H_{0})$ is empty if $H_{0} \notin (\mathfrak{a} \cap \mathfrak{l}_{ss}) = \mathfrak{a}^{L}$. To study the sets $\mathcal{C}(G, H_{0})$ with $G$ reductive and $H_{0} \in \mathfrak{a} \cap \mathfrak{g}_{ss}$, consider the map
\begin{align}\label{definitionfmapC}
\begin{split}
f_{G, H_{0}} : K & \to \mathfrak{a} \cap \mathfrak{g}_{ss} \\
k & \mapsto E_{\mathfrak{a}} (\operatorname{Ad}_{k^{-1}}(H_{0})) \,.
\end{split}
\end{align}
By definition, $\mathcal{C}(G, H_{0}) = f_{G, H_{0}}^{-1}(0)$. By \eqref{derivativeAdinverse}, the differential of $f_{G, H_{0}}$ at $k \in K$ is given by
\begin{align} \label{differentialf}
\begin{split}
(Df_{G, H_{0}})_{k} : \mathfrak{k} & \to \mathfrak{a} \cap \mathfrak{g}_{ss} \\
X & \mapsto - E_{\mathfrak{a}} ([X, \operatorname{Ad}_{k^{-1}}(H_{0})] ) \,.
\end{split}
\end{align}
For $G$ semisimple, we want to prove that the sets $\mathcal{C}(G, H_{0})$ are generically nonempty. To this end, define
\begin{align*}
g_{G, H_{0}} : K & \to \mathbb{R}_{\geq 0} \\
k & \mapsto \lVert f_{G, H_{0}}(k) \rVert^{2} \,.
\end{align*}
We have $\mathcal{C}(G, H_{0}) = g_{G, H_{0}}^{-1}(0)$. Because $g_{G, H_{0}}$ is a continuous function on a compact set, it reaches a minimum. Our aim is to show that the minima of $g_{G, H_{0}}$ satisfy $g_{G, H_{0}}(k) = 0$, it it will follow that $\mathcal{C}(G, H_{0})$ is nonempty.

Using Leibniz's rule and \eqref{differentialf}, the differential of $g_{G, H_0}$ at $k \in K$ is given by
\begin{align}
(Dg_{G, H_{0}})_{k}(X) & = 2 \langle (Df_{G, H_{0}})_{k}(X), f_{G, H_{0}}(k) \rangle \nonumber \\
& = -2 \langle [X, \operatorname{Ad}_{k^{-1}}(H_{0})], f_{G, H_{0}}(k) \rangle \label{differentialg} \,.
\end{align}

\begin{lemma} \label{criticalsetsubmersion}  Let $G$ be reductive and $H_{0} \in \mathfrak{a}^{\operatorname{reg}} \cap \mathfrak{g}_{ss}$. Then $f_{G, H_{0}}$ is a submersion at points $k \notin \bigcup_{L \in \mathcal L - \{ G\}} M' L$.
\end{lemma}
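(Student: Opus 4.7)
The plan is to show that the differential $(Df_{G, H_0})_k : \mathfrak{k} \to \mathfrak{a} \cap \mathfrak{g}_{ss}$ is surjective whenever $k \notin \bigcup_{L \in \mathcal{L} - \{G\}} M'L$. By the formula \eqref{differentialf}, the image is a subspace of $\mathfrak{a} \cap \mathfrak{g}_{ss}$, so surjectivity fails if and only if there exists a nonzero $H_1 \in \mathfrak{a} \cap \mathfrak{g}_{ss}$ orthogonal (in $\langle \cdot, \cdot\rangle_\theta$, which on $\mathfrak{a}$ agrees with $\langle \cdot, \cdot \rangle$) to every $E_\mathfrak{a}([X, H])$, where I abbreviate $H = \operatorname{Ad}_{k^{-1}}(H_0)$. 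I will argue by contrapositive: assuming such $H_1$ exists, I will produce a proper $L \in \mathcal{L}$ with $k \in M'L$.

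First I will rephrase the orthogonality condition. Since $E_\mathfrak{a}$ is orthogonal projection and $H_1 \in \mathfrak{a}$, the condition becomes $\langle H_1, [X, H]\rangle_\theta = 0$ for all $X \in \mathfrak{k}$. Because $[X, H] \in [\mathfrak{k}, \mathfrak{p}] \subset \mathfrak{p}$, on which $\langle\cdot,\cdot\rangle_\theta = \langle\cdot,\cdot\rangle$, I can apply $\operatorname{ad}$-invariance of $\langle\cdot,\cdot\rangle$ to rewrite this as $\langle [H_1, X], H\rangle_\theta = 0$ for all $X \in \mathfrak{k}$. In other words, $H$ is orthogonal to $[H_1, \mathfrak{k}]$ inside $\mathfrak{p}$.

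Next I will compute $[H_1, \mathfrak{k}]$ using the decomposition $\mathfrak{k} = \mathfrak{m} \oplus \bigoplus_{\alpha > 0}(1+\theta)\mathfrak{g}_\alpha$. Since $[H_1, \mathfrak{m}] = 0$ and $[H_1, (1+\theta)\mathfrak{g}_\alpha] = \alpha(H_1)(1 - \theta)\mathfrak{g}_\alpha$, one gets
\[
[H_1, \mathfrak{k}] = \bigoplus_{\alpha > 0, \, \alpha(H_1) \neq 0} \mathfrak{p}_\alpha \,,
\]
whose orthogonal complement in $\mathfrak{p}$ is precisely $\mathfrak{p} \cap \mathfrak{l}_1$, where $L_1 = Z_G(H_1) \in \mathcal{L}$. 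Thus $H \in \mathfrak{l}_1$, so $[H_0, \operatorname{Ad}_k(H_1)] = 0$, i.e.\ $\operatorname{Ad}_k(H_1) \in \mathfrak{z}_\mathfrak{g}(H_0)$. Because $H_0 \in \mathfrak{a}^{\operatorname{reg}}$, we have $\mathfrak{z}_\mathfrak{g}(H_0) = \mathfrak{a} \oplus \mathfrak{m}$, so Lemma~\ref{truelemma} yields $k \in M' Z_G(H_1) = M' L_1$.

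It remains to observe that $L_1$ is a proper Levi. Since $\mathfrak{a} = \mathfrak{z}_\mathfrak{p}(\mathfrak{g}) \oplus (\mathfrak{a} \cap \mathfrak{g}_{ss})$ orthogonally and the restricted roots span $(\mathfrak{a} \cap \mathfrak{g}_{ss})^*$, any nonzero $H_1 \in \mathfrak{a} \cap \mathfrak{g}_{ss}$ is non-trivially paired with some root, so $L_1 \neq G$. This contradicts the hypothesis on $k$ and completes the proof. The main subtlety to navigate carefully is the transition between $\langle\cdot,\cdot\rangle_\theta$ and the $\operatorname{ad}$-invariant form $\langle\cdot,\cdot\rangle$; everything works smoothly because all relevant vectors lie in $\mathfrak{p}$, where the two forms coincide. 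The rest is a direct root-space computation followed by an invocation of Lemma~\ref{truelemma}.
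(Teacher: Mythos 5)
Your proof is correct and follows essentially the same route as the paper: argue by contrapositive, take a nonzero $H_1 \in \mathfrak a \cap \mathfrak g_{ss}$ annihilating the image of the differential, use invariance of the form to deduce $[H_1, \operatorname{Ad}_{k^{-1}}(H_0)] = 0$, and then use regularity of $H_0$ together with a conjugation lemma to conclude $k \in M' Z_G(H_1)$ with $Z_G(H_1)$ a proper Levi. The only differences are cosmetic: where you compute $[H_1,\mathfrak k]$ and its orthocomplement in $\mathfrak p$ via root spaces and then invoke Lemma~\ref{truelemma}, the paper instead notes that $[\operatorname{Ad}_{k^{-1}}(H_0),H_1]\in[\mathfrak p,\mathfrak p]\subset\mathfrak k$ is orthogonal to $\mathfrak k$, hence zero, and then concludes via $Z_{\mathfrak p}(H_0)=\mathfrak a$ and Lemma~\ref{conjugatetoAimplieslevi}.
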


\begin{proof}Suppose $(Df_{G, H_{0}})_{k}$ is not surjective for some $k \in K$. Then there exists a nonzero $H \in \mathfrak{a} \cap \mathfrak{g}_{ss}$ such that $\langle [X, \operatorname{Ad}_{k^{-1}}(H_{0})] , H \rangle = 0$ for all $X \in \mathfrak{k}$. Using associativity of the Killing form, this is equivalent to
\[ [\operatorname{Ad}_{k^{-1}}(H_{0}), H] \perp \mathfrak{k} \,. \]
But $[\operatorname{Ad}_{k^{-1}}(H_{0}), H] \in [\mathfrak{p}, \mathfrak{p}] \subset \mathfrak{k}$, so that $[\operatorname{Ad}_{k^{-1}}(H_{0}), H] = 0$. That is, $\operatorname{Ad}_{k}(H) \in Z_{\mathfrak{p}}(H_{0})$. Because $H_{0}$ is regular, this implies $\operatorname{Ad}_{k}(H) \in \mathfrak{a}$ (see \cite[Lemma~6.50]{Knapp2002}), and Lemma~\ref{conjugatetoAimplieslevi} then implies that $k \in M' L$ with $L = Z_{G}(H)$. Because $H \notin Z(\mathfrak{g})$, this is a proper semistandard Levi subgroup. This proves the first statement.
\end{proof}

\begin{corollary} \label{maintermgenericsubmersionatcriticalsetreductive}  Let $G$ be reductive and $H_{0} \in \mathfrak{a}^{\operatorname{gen}} \cap \mathfrak{g}_{ss}$. Then $f_{G, H_0}$ is a submersion at the points of $\mathcal{C}(G, H_{0})$.\end{corollary}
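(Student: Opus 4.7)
The plan is simply to combine the two preceding results. Lemma~\ref{criticalsetsubmersion} guarantees that, under the weaker hypothesis $H_0 \in \mathfrak{a}^{\operatorname{reg}} \cap \mathfrak{g}_{ss}$, the map $f_{G, H_{0}}$ is already a submersion at every $k \in K$ lying outside the exceptional set $\bigcup_{L \in \mathcal{L} - \{G\}} M' L$. The upgrade from $\mathfrak{a}^{\operatorname{reg}}$ to $\mathfrak{a}^{\operatorname{gen}}$ is precisely what is needed to rule out critical points on this exceptional set.

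First I would unpack the genericity assumption: by definition of $\mathfrak{a}^{\operatorname{gen}}$ (\S\ref{defgenericset}), the hypothesis $H_0 \in \mathfrak{a}^{\operatorname{gen}}$ means $H_0 \in \mathfrak{a}^{\operatorname{reg}}$ and in addition $H_0 \notin \bigcup_{L \in \mathcal{L} - \{G\}} \mathfrak{a}^{L}$. The second condition is exactly the hypothesis of Lemma~\ref{CdoesnotmeetML}, which then tells us that
\[ \mathcal{C}(G, H_{0}) \cap \bigcup_{L \in \mathcal{L} - \{G\}} M' L = \emptyset \,. \]
Combining this with Lemma~\ref{criticalsetsubmersion} (applicable because $H_0 \in \mathfrak{a}^{\operatorname{reg}} \cap \mathfrak{g}_{ss}$) immediately gives that $f_{G, H_0}$ is a submersion at every point of $\mathcal{C}(G, H_0)$.

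The only genuine caveat to check is a bookkeeping one: Lemma~\ref{CdoesnotmeetML} and Proposition~\ref{mLnocriticalpoints} were stated within \S\ref{secAprojection} under the standing assumption that $G$ is semisimple, while the corollary allows $G$ to be reductive. This is a non-issue, however, because the proof of Proposition~\ref{mLnocriticalpoints} uses only Lemma~\ref{kpartofmL} (which is stated for reductive $G$) and the formula \eqref{cartanorbitdifferential} for $(D h_{H_0, g})_a$, both of which remain valid in the reductive setting with $H_0 \in \mathfrak{a}$. I do not anticipate a real obstacle here; the corollary is essentially an immediate consequence of the two preceding lemmas combined with the precise definition of $\mathfrak{a}^{\operatorname{gen}}$.
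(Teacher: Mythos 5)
Your proof is correct and is essentially the paper's own argument: the corollary is obtained by combining Lemma~\ref{CdoesnotmeetML} (which applies because $\mathfrak{a}^{\operatorname{gen}}$ excludes the spaces $\mathfrak{a}^{L}$ for $L \neq G$) with Lemma~\ref{criticalsetsubmersion}. Your additional remark on the semisimple-versus-reductive bookkeeping is sound and merely makes explicit what the paper leaves implicit.
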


\begin{proof}The set $\mathcal C(G, H_0)$ does not meet any of the sets $M' L$ by Lemma~\ref{CdoesnotmeetML}, and therefore $f_{G, H_0}$ is a submersion at points of $\mathcal C(G, H_0)$, by Lemma~\ref{criticalsetsubmersion}.
\end{proof}

\begin{lemma} \label{criticalpointgequations}Let $H_{0} \in \mathfrak{a}^{\operatorname{reg}}$. Define $\mathcal{D}_{H_{0}} = \{ k \in K : k \in Z_{G}(f_{G, H_{0}}(k)) \}$. The following hold:
\begin{enumerate}[label = (\roman*)]
\item The function $g_{G, H_{0}}$ is right invariant under $M'$.
\item The set of critical points of $g_{G, H_{0}}$ is $\mathcal{D}_{H_{0}} M'$.
\item Let $k \in \mathcal{D}_{H_{0}}$ and define $L = Z_{G}(f_{G, H_{0}}(k))$. Write $H_{0} = H_{L} + H^{L}$ with $H_{L} \in \mathfrak{a}_{L}$ and $H^{L} \in \mathfrak{a}^{L}$. Then $f_{G, H_{0}}(k) = H_{L}$ and $k \in \mathcal{C}(L, H^{L})$.
\end{enumerate}
\end{lemma}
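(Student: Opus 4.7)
I will prove (i) and (iii) first by direct algebraic manipulation, and then derive (ii) by combining them with the expression \eqref{differentialg} for $(Dg_{G, H_0})_k$. The reverse inclusion in (ii)---passing from the critical-point condition to the explicit description $\mathcal{D}_{H_0}M'$---is the main obstacle.

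Part (i) reduces to showing that $\operatorname{Ad}_{m^{-1}}$ commutes with $E_{\mathfrak{a}}$ for $m \in M' = N_K(A)$, which holds because $m$ preserves the restricted root-space decomposition $\mathfrak{g} = \mathfrak{a} \oplus \mathfrak{m} \oplus \bigoplus_\alpha \mathfrak{g}_\alpha$, permuting the $\mathfrak{g}_\alpha$ and preserving $\mathfrak{a}$ and $\mathfrak{m}$. From this one gets $f_{G, H_0}(km) = \operatorname{Ad}_{m^{-1}}(f_{G, H_0}(k))$; since $\operatorname{Ad}_{m^{-1}}$ acts on $\mathfrak{a}$ as an element of the Weyl group, hence as an isometry for $\langle \cdot, \cdot \rangle_\theta$, $g_{G, H_0}$ is right-$M'$-invariant.

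For part (iii), let $k \in \mathcal{D}_{H_0}$, set $Z = f_{G, H_0}(k)$ and $L = Z_G(Z)$, so that $Z \in \mathfrak{a}_L$. Because $L$ is a connected Levi subgroup and $k \in L \cap K = K_L$, the automorphism $\operatorname{Ad}_{k^{-1}}$ fixes $\mathfrak{z}(\mathfrak{l}) \supset \mathfrak{a}_L$ pointwise and preserves $\mathfrak{l}_{ss}$. Decomposing $H_0 = H_L + H^L$ and applying $E_{\mathfrak{a}}$ then yields
\[ Z = H_L + E_{\mathfrak{a}}(\operatorname{Ad}_{k^{-1}}(H^L)) \,, \]
whose two summands lie in $\mathfrak{a}_L$ and $\mathfrak{a}^L$ respectively (using that $E_{\mathfrak{a}}$ sends $\mathfrak{l}_{ss}$ into $\mathfrak{a}^L$). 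Since $Z \in \mathfrak{a}_L$, the $\mathfrak{a}^L$-component must vanish, giving simultaneously $Z = H_L$ and $k \in \mathcal{C}(L, H^L)$.

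For part (ii), the inclusion $\mathcal{D}_{H_0}M' \subset \operatorname{crit}(g_{G, H_0})$ follows painlessly: for $k_0 \in \mathcal{D}_{H_0}$, part (iii) gives $f_{G, H_0}(k_0) = H_L \in \mathfrak{z}(\mathfrak{l})$ and $\operatorname{Ad}_{k_0^{-1}}(H_0) \in \mathfrak{l}$, hence $[\operatorname{Ad}_{k_0^{-1}}(H_0), f_{G, H_0}(k_0)] = 0$, so \eqref{differentialg} combined with ad-invariance of the Killing form makes $(Dg_{G, H_0})_{k_0}$ vanish; part (i) then extends this to all of $\mathcal{D}_{H_0}M'$. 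The reverse inclusion is the main hurdle. After rewriting the critical-point condition via ad-invariance as $[\operatorname{Ad}_{k^{-1}}(H_0), f_{G, H_0}(k)] \perp \mathfrak{k}$ and noting that this bracket lies in $[\mathfrak{p}, \mathfrak{p}] \subset \mathfrak{k}$ on which $\langle \cdot, \cdot \rangle$ is definite, the bracket must be zero, i.e.\ $\operatorname{Ad}_{k^{-1}}(H_0) \in \mathfrak{l} := Z_{\mathfrak{g}}(f_{G, H_0}(k))$. To promote this Lie-algebra statement to the group-level conclusion $k \in \mathcal{D}_{H_0}M'$, I would observe that $\operatorname{Ad}_{k^{-1}}(H_0) \in \mathfrak{p} \cap \mathfrak{l}$, invoke the conjugacy of Cartan subspaces in the symmetric pair $(L, K_L)$ to produce $k_L \in K_L$ with $\operatorname{Ad}_{k_L k^{-1}}(H_0) \in \mathfrak{a}$, and then apply Lemma~\ref{conjugatetoAimplieslevi} together with $Z_G(H_0) = MA$ (from regularity of $H_0$) to obtain $k_L k^{-1} \in M' A$. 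Intersecting with $K$ forces $k_L k^{-1} \in M'$, so $k \in M' L$; a short calculation exploiting (i) then exhibits a factorization $k = k_0 m$ with $k_0 \in \mathcal{D}_{H_0}$ and $m \in M'$.
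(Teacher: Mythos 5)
Your proposal is sound in overall structure: part (i) and the easy inclusion $\mathcal D_{H_0}M' \subset \operatorname{crit}(g_{G,H_0})$ of (ii) coincide with the paper's argument (the paper leaves that inclusion implicit; you spell it out correctly via $f_{G,H_0}(k_0)\in\mathfrak z(\mathfrak l)$ and $\operatorname{Ad}_{k_0^{-1}}(H_0)\in\mathfrak l$). For the hard inclusion of (ii) you take a genuinely different route: from $[\operatorname{Ad}_{k^{-1}}(H_0), f_{G,H_0}(k)]=0$ you conclude $\operatorname{Ad}_{k^{-1}}(H_0)\in\mathfrak l=Z_{\mathfrak g}(f_{G,H_0}(k))$, conjugate it into $\mathfrak a$ by some $k_L\in K\cap L$ using conjugacy of maximal abelian subspaces of $\mathfrak p\cap\mathfrak l$, and then invoke Lemma~\ref{conjugatetoAimplieslevi} for $H_0$ together with $Z_G(H_0)=MA$ to land in $M'L$. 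The paper is shorter: regularity of $H_0$ gives $Z_{\mathfrak p}(\operatorname{Ad}_{k^{-1}}(H_0))=\operatorname{Ad}_{k^{-1}}(\mathfrak a)$, hence $f_{G,H_0}(k)\in\operatorname{Ad}_{k^{-1}}(\mathfrak a)$, and Lemma~\ref{conjugatetoAimplieslevi} applied directly to $f_{G,H_0}(k)$ gives $k\in M'Z_G(f_{G,H_0}(k))$. Your detour is valid but needs two extra standard inputs ($K_L$-conjugacy of maximal abelian subspaces of $\mathfrak p\cap\mathfrak l$ in the possibly disconnected reductive group $L$, and $Z_G(H_0)=MA$ for regular $H_0$), and the final step you only assert (``a short calculation'') is the same right-translation by $M'$ that the paper performs, using $f_{G,H_0}(km)=\operatorname{Ad}_{m^{-1}}(f_{G,H_0}(k))$; it does go through.

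The one genuine misstep is in (iii): $L=Z_G(f_{G,H_0}(k))$ is in general \emph{not} connected (already for regular $Z$ one has $Z_G(Z)=MA$ with $M$ disconnected, e.g.\ in $\operatorname{SL}_2(\mathbb R)$), so you cannot deduce that $\operatorname{Ad}_{k^{-1}}$ fixes $\mathfrak z(\mathfrak l)$ pointwise ``because $L$ is connected''. The statement you need is true but weaker, and can be obtained without any pointwise-fixing claim: since $k\in K\cap L$, $\operatorname{Ad}_{k^{-1}}$ preserves $\mathfrak z(\mathfrak l)\cap\mathfrak p$, which equals $\mathfrak a_L$ (an element of $\mathfrak p$ centralizing $\mathfrak a$ lies in $\mathfrak a$ by maximality). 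Hence $f_{G,H_0}(k)=\operatorname{Ad}_{k^{-1}}(H_L)+E_{\mathfrak a}(\operatorname{Ad}_{k^{-1}}(H^L))$ with the first summand in $\mathfrak a_L$ and the second in $\mathfrak a^L$ (your $\mathfrak l_{ss}$ argument, or the paper's orthogonality argument), so $E_{\mathfrak a}(\operatorname{Ad}_{k^{-1}}(H^L))=0$ and $f_{G,H_0}(k)=\operatorname{Ad}_{k^{-1}}(H_L)$; now apply $\operatorname{Ad}_k$ and use $\operatorname{Ad}_k(f_{G,H_0}(k))=f_{G,H_0}(k)$ (the definition of $\mathcal D_{H_0}$) to get $f_{G,H_0}(k)=H_L$. (The pointwise fixing of $\mathfrak a_L$ by all of $L$ is in fact true --- it is the assertion $Z_G(Z)=Z_G(\mathfrak a_L)$ --- but it requires its own argument or reference; the paper's ``using that $k\in L$'' glosses the same point.) With this repair your proof is complete.
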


\begin{proof}
\begin{enumerate}[label = (\roman*)]
\item We show that $f_{G, H_{0}}(km) = \operatorname{Ad}_{m^{-1}}(f_{G, H_{0}}(k))$ for $m \in M'$. Because the adjoint action of $M'$ on $\mathfrak{a}$ is isometric, it will then follow that $g_{G, H_{0}}$ is right invariant under $M'$.

Because $M'$ normalizes $\mathfrak{a}$, it normalizes the orthogonal complement $\mathfrak{a}^{\perp}$, so that the adjoint action of $M'$ commutes with $E_{\mathfrak{a}}$. This implies
\begin{align*}
f_{G, H_{0}}(km) & = E_{\mathfrak{a}}(\operatorname{Ad}_{m^{-1}k^{-1}}(H_{0})) \\
& = \operatorname{Ad}_{m^{-1}} (E_{\mathfrak{a}}( \operatorname{Ad}_{k^{-1}}(H_{0}))) \\
& = \operatorname{Ad}_{m^{-1}}(f_{G, H_{0}}(k)) \,.
\end{align*}
\item Let $k$ be a critical point of $g$. Using \eqref{differentialg} we have for all $X \in \mathfrak{k}$,
\begin{align*}
0 = (Dg_{G, H_{0}})_{k} & = -2 \langle E_{\mathfrak{a}}([X, \operatorname{Ad}_{k^{-1}}(H_{0})]), f_{G, H_{0}}(k) \rangle \,.
\end{align*}
That is,
\[ \langle \mathfrak{k}, [\operatorname{Ad}_{k^{-1}}(H_{0}), f_{G, H_{0}}(k)] \rangle = 0 \,. \]
But $[\operatorname{Ad}_{k^{-1}}(H_{0}), f_{G, H_{0}}(k)] \in [\mathfrak{p}, \mathfrak{p}] = \mathfrak{k}$, so that this must be zero. Because $H_{0} \in \mathfrak{a}^{\operatorname{reg}}$, this implies that $f_{G, H_{0}}(k) \in \operatorname{Ad}_{k^{-1}}(\mathfrak{a})$. By Lemma~\ref{conjugatetoAimplieslevi} it follows that $k \in M' Z_{G}(f_{G, H_{0}}(k))$. Replacing $k$ by an appropriate right translate under $M'$, this becomes $k \in Z_{G}(f_{G, H_{0}}(k))$. That is, $k \in \mathcal{D}_{H_{0}}$.
\item Take $k \in \mathcal{D}_{H_{0}}$. By definition of $L$ we have $f_{G, H_{0}}(k) \in \mathfrak{a}_{L}$. Writing $H_{0} = H_{L} + H^{L}$ and using that $k \in L$, we have $f_{G, H_{0}}(k) = E_{\mathfrak{a}}(\operatorname{Ad}_{k^{-1}}(H_{0})) = H_{L} + E_{\mathfrak{a}}(\operatorname{Ad}_{k^{-1}}(H^{L}))$. If we prove that $E_{\mathfrak{a}}(\operatorname{Ad}_{k^{-1}}(H^{L})) = 0$, the two statements follow. On the one hand $f_{G, H_{0}}(k) \in \mathfrak{a}_{L}$ implies $E_{\mathfrak{a}}(\operatorname{Ad}_{k^{-1}}(H^{L})) = f_{G, H_{0}}(k) - H_{L} \in \mathfrak{a}_{L}$. On the other hand, $H^{L} \perp \mathfrak{a}_{L}$ and $k \in L$ implies $\operatorname{Ad}_{k^{-1}}(H^{L}) \perp \mathfrak{a}_{L}$, and hence $E_{\mathfrak{a}}(\operatorname{Ad}_{k^{-1}}(H^{L})) \perp \mathfrak{a}_{L}$. It follows that $E_{\mathfrak{a}}(\operatorname{Ad}_{k^{-1}}(H^{L})) \in \mathfrak{a}_{L} \cap \mathfrak{a}^{L} = \{ 0 \}$. \qedhere
\end{enumerate}
\end{proof}

\begin{lemma} Let $H_{0} \in \mathfrak{a}^{\operatorname{reg}}$, $\mathcal{D}_{H_{0}}$ be as in Lemma~\ref{criticalpointgequations} and $k \in \mathcal{D}_{H_{0}}$ a critical point of $g_{G, H_{0}}$. Define $L$, $H_{L}$ and $H^{L}$ as in the same lemma. Then the Hessian of $g_{G, H_{0}}$ at $k$ satisfies
\begin{align}
\label{existenceHessiansimplified}
\begin{split}
\frac12 (\operatorname{Hess}_{k} g_{G, H_0}) (X, X)  & = - \left\lVert [X, H_{L}] \right\rVert^{2} - \langle [X, \operatorname{Ad}_{k^{-1}}(H^{L})], [X, H_{L}] \rangle \\
& \mathrel{\phantom{=}} + \left\lVert E_{\mathfrak{a}}([ X, \operatorname{Ad}_{k^{-1}}(H^{L})]) \right\rVert^{2}
\end{split}
\end{align}
for all $X \in \mathfrak{k}$.
\end{lemma}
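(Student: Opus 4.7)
The strategy is to compute the diagonal of the Hessian via the second left-invariant derivative: since $k$ is a critical point of $g_{G, H_{0}}$, one has $(\operatorname{Hess}_{k} g_{G, H_{0}})(X, X) = (L_{X} L_{X} g_{G, H_{0}})(k)$. Writing $g_{G, H_{0}} = \langle f_{G, H_{0}}, f_{G, H_{0}} \rangle_{\theta}$ and applying Leibniz's rule gives
\[ \tfrac{1}{2}(L_{X} L_{X} g_{G, H_{0}})(k) = \langle (L_{X} L_{X} f_{G, H_{0}})(k), f_{G, H_{0}}(k) \rangle_{\theta} + \lVert (L_{X} f_{G, H_{0}})(k) \rVert^{2} . \]

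From \eqref{differentialf} one has $(L_{X} f_{G, H_{0}})(k) = -E_{\mathfrak{a}}([X, \operatorname{Ad}_{k^{-1}}(H_{0})])$, and differentiating once more using \eqref{derivativeAdinverse} yields $(L_{X} L_{X} f_{G, H_{0}})(k) = E_{\mathfrak{a}}([X, [X, \operatorname{Ad}_{k^{-1}}(H_{0})]])$. The decisive input at this stage is Lemma~\ref{criticalpointgequations}(iii), which asserts $f_{G, H_{0}}(k) = H_{L}$ and $k \in L = Z_{G}(H_{L})$. In particular $\operatorname{Ad}_{k^{-1}}(H_{L}) = H_{L}$, so that $\operatorname{Ad}_{k^{-1}}(H_{0}) = H_{L} + \operatorname{Ad}_{k^{-1}}(H^{L})$.

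The third term of \eqref{existenceHessiansimplified} follows immediately. Because $\mathfrak{k}$ sits inside $\mathfrak{m} \oplus \bigoplus_{\alpha \in \Sigma} \mathfrak{g}_{\alpha}$ and $[\mathfrak{g}_{\alpha}, \mathfrak{a}] \subset \mathfrak{g}_{\alpha}$, one has $E_{\mathfrak{a}}([X, H_{L}]) = 0$, whence $\lVert (L_{X} f_{G, H_{0}})(k) \rVert^{2} = \lVert E_{\mathfrak{a}}([X, \operatorname{Ad}_{k^{-1}}(H^{L})]) \rVert^{2}$. For the first term, $f_{G, H_{0}}(k) = H_{L} \in \mathfrak{a}$ lets me drop the outer $E_{\mathfrak{a}}$, and ad-invariance of $\langle \cdot, \cdot \rangle$ then converts the expression into $-\langle [X, \operatorname{Ad}_{k^{-1}}(H_{0})], [X, H_{L}] \rangle$. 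Substituting $\operatorname{Ad}_{k^{-1}}(H_{0}) = H_{L} + \operatorname{Ad}_{k^{-1}}(H^{L})$ splits this into $-\langle [X, H_{L}], [X, H_{L}] \rangle$ and $-\langle [X, \operatorname{Ad}_{k^{-1}}(H^{L})], [X, H_{L}] \rangle$, producing the remaining two terms of \eqref{existenceHessiansimplified}.

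The main care point is the bookkeeping between $\langle \cdot, \cdot \rangle$ and $\langle \cdot, \cdot \rangle_{\theta}$, which is resolved by noting that $[X, H_{L}] \in [\mathfrak{k}, \mathfrak{p}] \subset \mathfrak{p}$, where $-\theta$ acts as the identity; hence $\langle \cdot, [X, H_{L}] \rangle$ and $\langle \cdot, [X, H_{L}] \rangle_{\theta}$ coincide, and in particular $-\langle [X, H_{L}], [X, H_{L}] \rangle = -\lVert [X, H_{L}] \rVert^{2}$. Beyond this sign tracking the derivation is mechanical, so I do not anticipate a serious obstacle.
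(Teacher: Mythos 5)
Your proof is correct and follows essentially the same route as the paper: both compute the Hessian at the critical point via Leibniz's rule and \eqref{derivativeAdinverse}, then use Lemma~\ref{criticalpointgequations}(iii) (so $f_{G,H_0}(k)=H_L$, $\operatorname{Ad}_{k^{-1}}(H_L)=H_L$) together with $E_{\mathfrak a}([X,H_L])=0$ to arrive at \eqref{existenceHessiansimplified}. The only cosmetic differences are that you differentiate $\lVert f_{G,H_0}\rVert^2$ directly along the diagonal rather than differentiating \eqref{differentialg}, and you justify $E_{\mathfrak a}([X,H_L])=0$ via the root space decomposition instead of associativity of the Killing form.
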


\begin{proof}
Starting from \eqref{differentialg} and using Leibniz's rule and \eqref{derivativeAdinverse}, we have that the Hessian of $g$ at the critical point $k$ takes the form
\begin{align*}
\operatorname{Hess}_{k} g : \mathfrak{k} \times \mathfrak{k} & \to \mathbb{R} \\
(X, Y) & \mapsto 2 \left \langle E_{\mathfrak{a}}( [ Y, [ X, \operatorname{Ad}_{k^{-1}}(H_{0})]]) , f_{G, H_{0}}(k) \right\rangle \\
& \mathrel{\phantom{\mapsto}} + 2 \left \langle E_{\mathfrak{a}}( [ Y, \operatorname{Ad}_{k^{-1}}(H_{0})]) , E_{\mathfrak{a}}([ X, \operatorname{Ad}_{k^{-1}}(H_{0})]) \right\rangle \,.
\end{align*}
As $f_{G, H_{0}}(k) \in \mathfrak{a}$, we may drop the projection $E_{\mathfrak{a}}$ in the first term. Replacing $f_{G, H_{0}}(k)$ by $H_{L}$ and using associativity of the Killing form we obtain
\begin{align*}
& -2 \left \langle [ X, \operatorname{Ad}_{k^{-1}}(H_{0})] , [Y, H_{L}] \right\rangle \\
& \qquad + 2 \left \langle E_{\mathfrak{a}}( [ Y, \operatorname{Ad}_{k^{-1}}(H_{0})]), E_{\mathfrak{a}}([ X, \operatorname{Ad}_{k^{-1}}(H_{0})]) \right\rangle \,.
\end{align*}
The associated quadratic form on $\mathfrak{k}$ sends
\begin{align*}
X \mapsto -2 \left \langle [ X, \operatorname{Ad}_{k^{-1}}(H_{0})] , [X, H_{L}] \right \rangle + 2 \left\lVert E_{\mathfrak{a}}([ X, \operatorname{Ad}_{k^{-1}}(H_{0})]) \right\rVert^{2} \,.
\end{align*}
Writing $H_{0} = H_{L} + H^{L}$ and recalling that $k \in L$ centralizes $H_{L}$, this becomes
\begin{align*}
& - 2 \left\lVert [X, H_{L}] \right\rVert^{2} -2 \langle [X, \operatorname{Ad}_{k^{-1}}(H^{L})], [X, H_{L}] \rangle \\
& \quad + 2 \left\lVert E_{\mathfrak{a}}([ X, H_{L} + \operatorname{Ad}_{k^{-1}}(H^{L})]) \right\rVert^{2} \,.
\end{align*}
Because $H_{L} \in \mathfrak{a}$, associativity of the Killing form implies $E_{\mathfrak{a}}([X, H_{L}]) = 0$, so that the third term simplifies and we obtain \eqref{existenceHessiansimplified}.
\end{proof}

\begin{lemma} \label{criticalpointgminimum}Let $H_{0} \in \mathfrak{a}^{\operatorname{gen}}$ and $k \in K$ be a critical point of $g_{G, H_{0}}$ with positive semidefinite Hessian. Then $g_{G, H_{0}}(k) = 0$. \end{lemma}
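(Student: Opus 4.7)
The plan is to assume, for contradiction, that $g_{G, H_0}(k) > 0$ and to exhibit an $X \in \mathfrak{k}$ for which the Hessian formula \eqref{existenceHessiansimplified} takes a strictly negative value.

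First, I would invoke Lemma~\ref{criticalpointgequations} to reduce to the case $k \in \mathcal{D}_{H_0}$: since $k$ is a critical point, it lies in $\mathcal{D}_{H_0} M'$, and right multiplication by elements of $M'$ is a diffeomorphism of $K$ under which $g_{G, H_0}$ is invariant, so positive semidefiniteness of the Hessian is preserved by this right action. With this reduction in place, define $L = Z_G(f_{G, H_0}(k))$ and decompose $H_0 = H_L + H^L$ as in Lemma~\ref{criticalpointgequations}(iii). Then $f_{G, H_0}(k) = H_L$ and $k \in L$, so $g_{G, H_0}(k) = \lVert H_L\rVert^2$ and it suffices to show $H_L = 0$. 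Suppose not: semisimplicity of $\mathfrak{g}$ ensures that the restricted roots span $\mathfrak{a}^*$, so there is some root $\alpha \in \Sigma$ with $\alpha(H_L) \neq 0$. Equivalently $\alpha \notin \Phi_L$, where $\Phi_L = \{\beta \in \Sigma : \beta(H_L) = 0\}$ is the restricted root system of $L$.

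The key step is to test \eqref{existenceHessiansimplified} on $X = X_\alpha + \theta X_\alpha$ with $0 \neq X_\alpha \in \mathfrak{g}_\alpha$. Set $Z = \operatorname{Ad}_{k^{-1}}(H^L)$. Because $k \in L$ centralizes $H_L$, a direct calculation gives
\[ [X, H_L] = -\alpha(H_L)\bigl(X_\alpha - \theta X_\alpha\bigr) \in (\mathfrak{g}_\alpha + \mathfrak{g}_{-\alpha}) \cap \mathfrak{p} \,,\]
with squared norm $2\alpha(H_L)^2 \lVert X_\alpha\rVert_\theta^2$. On the other hand, $Z$ lies in $\mathfrak{p} \cap \mathfrak{l}_{ss}$ and is orthogonal to $\mathfrak{a}$ (because $k \in \mathcal{C}(L, H^L)$), hence decomposes over the root spaces $\mathfrak{g}_{\pm\beta}$ with $\beta \in \Phi_L^+$. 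Consequently $[X, Z]$ lies in the sum of the root spaces $\mathfrak{g}_{\pm\alpha \pm\beta}$ for $\beta \in \Phi_L$. I would then check that none of these indices equals $0$ or $\pm\alpha$: such an equality forces $\beta \in \{0, \pm 2\alpha\}$, which is ruled out because $0 \notin \Sigma$ and $\alpha(H_L) \neq 0$ implies $(\pm 2\alpha)(H_L) \neq 0$, so $\pm 2\alpha \notin \Phi_L$. Orthogonality of distinct root spaces under $\langle\cdot, \cdot\rangle_\theta$ then forces both $E_\mathfrak{a}[X, Z] = 0$ and $\langle [X, Z], [X, H_L]\rangle = 0$, and \eqref{existenceHessiansimplified} collapses to
\[ \tfrac12 (\operatorname{Hess}_k g_{G, H_0})(X, X) = -\lVert [X, H_L]\rVert^2 = -2\alpha(H_L)^2 \lVert X_\alpha\rVert_\theta^2 < 0 \,, \]
contradicting positive semidefiniteness. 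Therefore $H_L = 0$ and $g_{G, H_0}(k) = 0$.

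The main obstacle is really a bookkeeping one: verifying that the two potentially obstructive terms in \eqref{existenceHessiansimplified}, namely the cross term and the $\lVert E_\mathfrak{a}[\cdot, \cdot]\rVert^2$ term, both vanish for this specific $X$. The conceptual idea is that choosing a root direction outside $\Phi_L$ confines $[X, H_L]$ to the single root-space component $\mathfrak{g}_\alpha \oplus \mathfrak{g}_{-\alpha}$ while shifting $[X, Z]$ entirely off of that component, decoupling the three terms so that only the manifestly negative leading term survives.
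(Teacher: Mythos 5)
Your proof is correct, but the core construction is genuinely different from (and simpler than) the paper's. The paper, after the same reduction to $k \in \mathcal{D}_{H_0}$ via Lemma~\ref{criticalpointgequations} and the same Hessian formula \eqref{existenceHessiansimplified}, splits into cases according to whether $H^L$ is regular: in one case it kills the last two terms by choosing $X$ along a root annihilating $H^L$, and in the harder case it first makes the cross term strictly negative (using $\langle H^L, H_L\rangle = 0$ to find $\alpha$ with $\alpha(H^L)\alpha(H_L) > 0$) and then corrects $X$ by some $Y \in \mathfrak{k} \cap \mathfrak{l}$ to annihilate the $\lVert E_{\mathfrak a}(\cdot)\rVert^2$ term, which requires the surjectivity statement of Lemma~\ref{criticalsetsubmersion} applied to a minimal Levi $L'$ with $k \in L'M'$. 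You avoid both the case split and the submersion argument: choosing $X \in \mathfrak{k} \cap (\mathfrak{g}_\alpha + \mathfrak{g}_{-\alpha})$ for any $\alpha$ with $\alpha(H_L) \neq 0$, and using that $Z = \operatorname{Ad}_{k^{-1}}(H^L)$ lies in $\mathfrak{l} \cap \mathfrak{p}$ with $E_{\mathfrak a}(Z)=0$, so its root support is contained in $\Phi_L$, you get that the support of $[X,Z]$ avoids $0$ and $\pm\alpha$ (the forced values $\beta \in \{0, \pm\alpha, \pm 2\alpha\}$ are all excluded since $\alpha(H_L)\neq 0$ — note your list omits $\pm\alpha$, which is what the index $0$ actually forces, but it is excluded for the same reason), so the cross term and the $E_{\mathfrak a}$ term vanish identically and only the manifestly negative first term survives. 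This buys a shorter argument that does not even use $H_0 \in \mathfrak{a}^{\operatorname{gen}}$ beyond the regularity already needed for Lemma~\ref{criticalpointgequations} and \eqref{existenceHessiansimplified}; what it gives up is only the extra structural information the paper's route extracts along the way (the interplay with the minimal Levi $L'$), which is not needed for this lemma.
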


\begin{proof}The proof is by contradiction. Assuming that $f_{G, H_{0}}(k) \neq 0$, we will construct a direction in which the Hessian of $g_{G, H_{0}}$ is negative definite at $k$.

Let $\mathcal{D}_{H_{0}}$ be as in Lemma~\ref{criticalpointgequations}. By Lemma~\ref{criticalpointgequations} and right invariance of $g$ under $M'$, we may assume that $k \in \mathcal{D}_{H_{0}}$. Let $L = Z_{G}(f_{G, H_{0}}(k))$ and let $H_{L}$ and $H^{L}$ be as in Lemma~\ref{criticalpointgequations}, so that $f_{G, H_{0}}(k) = H_{L}$ and $k \in \mathcal{C}(L, H^{L})$. 
Suppose that $H_L \neq 0$. We will construct $X \in \mathfrak{k}$ for which the third term in \eqref{existenceHessiansimplified} is zero, and for which the other terms are nonpositive and not both zero.

Suppose first that $H^{L} \notin \mathfrak{a}^{\operatorname{reg}}$. Then there exists a nonzero element $X \in \mathfrak{k}$ with $[X, \operatorname{Ad}_{k^{-1}}(H^{L})] = 0$. Indeed, if $\alpha \in \Sigma$ is such that $\alpha(H^{L}) = 0$, take a nonzero $X' \in (\mathfrak{g}_{\alpha} + \mathfrak{g}_{- \alpha}) \cap \mathfrak{k}$. Then $[X', H^{L}] = 0$, and we can take $X = \operatorname{Ad}_{k^{-1}}(X')$.

With this choice of $X$, the second and third terms in \eqref{existenceHessiansimplified} vanish. We have $[X, H_{L}] = [X, \operatorname{Ad}_{k^{-1}}(H_{0})] \neq 0$ by regularity of $H_{0}$, so that
\[ (\operatorname{Hess}_{k} g_{G, H_0}) (X, X) = -2 \cdot \left\lVert [X, H_{L}] \right\rVert^{2} < 0 \,. \]
This is a contradiction.

Suppose now that $H^{L} \in \mathfrak{a}^{\operatorname{reg}}$.
We first show that there exists $X \in \mathfrak{k}$ for which the second term in \eqref{existenceHessiansimplified} is strictly negative. After that, we will modify $X$ in such a way that the second term stays the same and such that the third becomes zero.

The second term in \eqref{existenceHessiansimplified} equals
\[ - \langle [\operatorname{Ad}_{k}(X), H^{L}], [\operatorname{Ad}_{k}(X), H_{L}] \rangle \,. \]
Write $\operatorname{Ad}_{k}(X)$ in the restricted root space decomposition \eqref{rootspacedecomp} as
\[ X = \sum_{\alpha \in \Sigma} X_{\alpha} \,. \]
Then the above equals
\begin{equation} \label{secondtermnonemptyproofroots}
- \sum_{\alpha \in \Sigma} \alpha(H^{L}) \alpha(H_{L}) \lVert X_{\alpha} \rVert_{\theta}^{2} \,.
\end{equation}
Because $H_{L} \neq 0$ by assumption, there exists $\beta \in \Sigma$ such that $\beta(H_{L}) \neq 0$. Because $H^{L} \in \mathfrak{a}^{\operatorname{reg}}$, we have $\beta(H^{L}) \beta(H_{L}) \neq 0$. Using \cite[Corollary 2.24]{Knapp2002} we have
\[ 0 = \langle H^{L}, H_{L} \rangle = \sum_{\alpha \in \Sigma} \alpha(H^{L}) \alpha(H_{L}) \,, \]
so that there is at least one strictly positive term in this sum. Let $\alpha \in \Sigma$ be such that $\alpha(H^{L}) \alpha(H_{L}) > 0$. Take now a nonzero $X \in \mathfrak{k} \cap (\mathfrak{g}_{\alpha} + \mathfrak{g}_{- \alpha})$, then \eqref{secondtermnonemptyproofroots} is strictly negative. That is, the second term in \eqref{existenceHessiansimplified} is strictly negative.

Observe that when $Y \in \mathfrak{k} \cap \mathfrak{l}$, adding $Y$ to $X$ does not change the first two terms of \eqref{existenceHessiansimplified}, because $[Y, H_{L}] = 0$. Thus in order to make the third term in \eqref{existenceHessiansimplified} zero and obtain a contradiction, it suffices to find $Y \in \mathfrak{k} \cap \mathfrak{l}$ with
\[ E_{\mathfrak{a}}([Y, \operatorname{Ad}_{k^{-1}}(H^{L})]) = E_{\mathfrak{a}}([X, \operatorname{Ad}_{k^{-1}}(H^{L})]) \,, \]
and it will follow that
\[ (\operatorname{Hess}_{k} g_{G, H_0}) (X - Y, X - Y) = -2 \left\lVert [X, H_{L}] \right\rVert^{2} - 2 \langle [X, \operatorname{Ad}_{k^{-1}}(H^{L})], [X, H_{L}] \rangle < 0 \,. \]
Let $L' \subset L$ be the smallest semistandard Levi subgroup with the property that $k \in L' M'$, say $k  = \ell'm$. From $k \in \mathcal{C}(L, H^{L})$ it follows that $\operatorname{Ad}_{k^{-1}} (H^{L}) \perp \mathfrak{a}$, and therefore $H^{L} \in \mathfrak{a}^{L'}$. We also have that $[\mathfrak{k}, \operatorname{Ad}_{k^{-1}}(H^{L})] \perp \operatorname{Ad}_{k^{-1}}(\mathfrak{a}) \supset \operatorname{Ad}_{m}^{-1}(\mathfrak{a}_{L'})$, so that $E_{\mathfrak{a}}([\mathfrak{k}, \operatorname{Ad}_{k^{-1}}(H^{L})]) \subset \operatorname{Ad}_{m}^{-1}(\mathfrak{a}^{L'})$. Hence it suffices to show that the map
\begin{align} \label{nonemptycmapsurjective}
\begin{split}
\mathfrak{k} \cap \mathfrak{l} & \to \operatorname{Ad}_{m}^{-1}(\mathfrak{a}^{L'}) \\
Y & \mapsto E_{\mathfrak{a}}([Y, \operatorname{Ad}_{k^{-1}}(H^{L})])
\end{split}
\end{align}
is surjective. Its restriction to $\mathfrak{k} \cap \mathfrak{l}'$ is precisely $-\operatorname{Ad}_{m}^{-1} \circ (Df_{L', H^{L}})_{\ell'}$ (compare \eqref{differentialf}). We seek to apply Lemma~\ref{criticalsetsubmersion} to $L'$.

By assumption we have $H^{L} \in \mathfrak{a}^{\operatorname{reg}}$, so that $H^{L} \in (\mathfrak{a}^{L'})^{\operatorname{reg}}$. By minimality of $L'$, we have $\ell' \notin \bigcup_{L'' \subsetneq L'} M' L''$. Therefore Lemma~\ref{criticalsetsubmersion} shows that $(Df_{L', H^{L}})_{\ell'}$ is surjective, so that the map \eqref{nonemptycmapsurjective} is surjective and the conclusion follows.
\end{proof}

\begin{corollary} \label{Cnonempty}When $H_{0} \in \mathfrak{a}^{\operatorname{gen}}$ we have $\mathcal{C}(G, H_{0}) \neq \varnothing$.\end{corollary}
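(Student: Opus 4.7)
The plan is to exploit the work already done in Lemma~\ref{criticalpointgminimum}. The function $g_{G, H_0}: K \to \mathbb R_{\geq 0}$ is continuous on the compact group $K$, hence attains its minimum at some $k_0 \in K$. Since $K$ is a manifold without boundary, $k_0$ is in particular a critical point of $g_{G, H_0}$, and the Hessian $\operatorname{Hess}_{k_0} g_{G, H_0}$ is positive semidefinite.

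With $H_0 \in \mathfrak a^{\operatorname{gen}}$, Lemma~\ref{criticalpointgminimum} applies directly to $k_0$ and yields $g_{G, H_0}(k_0) = 0$. Since $\mathcal C(G, H_0) = g_{G, H_0}^{-1}(0)$, this shows $k_0 \in \mathcal C(G, H_0)$, and so $\mathcal C(G, H_0) \neq \varnothing$.

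There is essentially no obstacle here — all the technical work (ruling out critical points $k$ with $f_{G, H_0}(k) \neq 0$ by producing a direction of negative curvature in the Hessian) has been absorbed into Lemma~\ref{criticalpointgminimum}. The present corollary is just the packaging: compactness provides a minimizer, minimizers have positive semidefinite Hessian, and the lemma forces such critical points to lie in the zero set of $f_{G, H_0}$.
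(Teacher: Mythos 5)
Your proof is correct and is exactly the paper's argument: compactness of $K$ gives a minimizer of $g_{G, H_0}$, which is a critical point with positive semidefinite Hessian, so Lemma~\ref{criticalpointgminimum} forces the minimum value to be $0$, i.e.\ $\mathcal{C}(G, H_0) = g_{G, H_0}^{-1}(0) \neq \varnothing$. Nothing to add.
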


\begin{proof}
The continuous function $g_{G, H_{0}} : K \to \mathbb{R}_{\geq 0}$ attains a minimum, which must be $0$ by Lemma~\ref{criticalpointgminimum}.
\end{proof}

\subsection{Uniqueness of critical points}

\label{sectionuniqueness}

\begin{lemma} \label{hessianquadformnegativedefinite} Let $H_{0} \in \mathfrak{a}^{+}$ and $k \in K - \bigcup_{L\in \mathcal L - \{ G\}} M' L$. Then the quadratic form on $\mathfrak{a}$ defined by
\[ H \mapsto \langle H_0, [ \operatorname{Ad}_{k}(H), E_{\mathfrak n} ( \operatorname{Ad}_{k}(H) ) ] \rangle \]
is negative definite. In particular, it is nondegenerate.
\end{lemma}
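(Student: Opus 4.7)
The plan is to make the quadratic form explicit using the root space decomposition \eqref{rootspacedecomp} and associativity of $\langle\cdot,\cdot\rangle$. Since $H \in \mathfrak{a} \subset \mathfrak{p}$ and $k \in K$, the element $A := \operatorname{Ad}_k(H)$ satisfies $\theta(A) = -A$. Writing $A = H' + X_0 + \sum_{\alpha \in \Sigma} X_\alpha$ in \eqref{rootspacedecomp} and applying $\theta$ (using $\theta|_{\mathfrak a} = -1$, $\theta|_{\mathfrak m} = +1$, $\theta(\mathfrak{g}_\alpha) = \mathfrak{g}_{-\alpha}$) yields the two identities $X_0 = 0$ and $\theta(X_\alpha) = -X_{-\alpha}$. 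The second will convert $\langle X_{-\alpha}, X_\alpha\rangle$ into $\lVert X_\alpha\rVert_\theta^2 \geq 0$ via $\langle X_\alpha, -\theta(X_\alpha)\rangle$.

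Then by associativity of the Killing form, I would rewrite $\langle H_0, [A, E_{\mathfrak{n}}(A)]\rangle = \langle [H_0, A], E_{\mathfrak{n}}(A)\rangle$, expand $[H_0, A] = \sum_{\alpha \in \Sigma} \alpha(H_0) X_\alpha$ and $E_{\mathfrak{n}}(A) = \sum_{\alpha \in \Sigma^+} X_\alpha$, and use that $\mathfrak{g}_\alpha \perp \mathfrak{g}_\beta$ when $\alpha + \beta \neq 0$. Only index pairs $(-\beta, \beta)$ with $\beta \in \Sigma^+$ contribute, leading to
\[ \langle H_0, [\operatorname{Ad}_k(H), E_{\mathfrak{n}}(\operatorname{Ad}_k(H))]\rangle = -\sum_{\alpha \in \Sigma^+} \alpha(H_0)\, \lVert X_\alpha \rVert_\theta^2 \,. \]
Each coefficient $\alpha(H_0)$ is strictly positive since $H_0 \in \mathfrak{a}^+$, so the form is negative semidefinite, vanishing iff all $X_\alpha$ vanish; by the $\theta$-relation this is equivalent to $\operatorname{Ad}_k(H) \in \mathfrak{a}$.

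The final step is to upgrade negative semidefiniteness to negative definiteness using the hypothesis on $k$. If some nonzero $H \in \mathfrak{a}$ satisfied $\operatorname{Ad}_k(H) \in \mathfrak{a}$, then Lemma~\ref{conjugatetoAimplieslevi} would give $k \in M' Z_G(H)$; but for semisimple $G$ and $H \neq 0$ the centralizer $Z_G(H)$ is a proper element of $\mathcal{L}$, contradicting $k \notin \bigcup_{L \in \mathcal L - \{G\}} M' L$. The bulk of the argument is a routine bookkeeping exercise in the root space decomposition; the one conceptual point is recognizing that the exclusion of the sets $M' L$ serves precisely to eliminate the degeneracy at the boundary, namely the case $\operatorname{Ad}_k(H) \in \mathfrak{a}$ for some nonzero $H$.
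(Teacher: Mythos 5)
Your argument is correct and is essentially the paper's own proof: the paper obtains the same expression $-2\sum_{\alpha\in\Sigma^+}\alpha(H_0)\,\lVert X_\alpha\rVert_\theta^2$ by quoting Lemma~\ref{computationAsecondderivative}, and then upgrades semidefiniteness to definiteness exactly as you do, using Lemma~\ref{conjugatetoAimplieslevi} to rule out $\operatorname{Ad}_k(H)\in\mathfrak a$ for nonzero $H$. One harmless slip: since $\theta X_{-\alpha}=-X_\alpha$, the Iwasawa projection is $E_{\mathfrak n}(\operatorname{Ad}_k(H))=\sum_{\alpha\in\Sigma^+}\bigl(X_\alpha-\theta X_{-\alpha}\bigr)=2\sum_{\alpha\in\Sigma^+}X_\alpha$ (not $\sum_{\alpha\in\Sigma^+}X_\alpha$), so your displayed value is off by a factor of $2$, which does not affect negative definiteness.
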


\begin{proof}Take a nonzero $H \in \mathfrak{a}$, and write the element $\operatorname{Ad}_{k}(H) \in \mathfrak{p}$ in the restricted root space decomposition \eqref{rootspacedecomp} as $\sum_{\alpha \in \Sigma \cup \{ 0 \} } X_{\alpha}$. Then $\theta X_{-\alpha} = -X_\alpha$. By Lemma~\ref{computationAsecondderivative}, the quadratic form evaluated at $H$ equals
\[ \sum_{\alpha \in \Sigma^{+}} \langle -X_{\alpha}, 2 X_{\alpha} \rangle_{\theta} \langle H_\alpha, H_0 \rangle = -2 \sum_{\alpha \in \Sigma^{+}} \lVert X_{\alpha} \rVert_{\theta}^2 \cdot \alpha(H_0) \,, \]
which is nonpositive because $H_{0}$ lies in the positive Weyl chamber. Therefore the quadratic form is negative semidefinite. The fact that $k \notin \bigcup_{L \in \mathcal L - \{ G\}} M' L$ implies $\operatorname{Ad}_{k}(H) \notin \mathfrak{a}$ (Lemma~\ref{conjugatetoAimplieslevi}). So at least one $X_{\alpha}$ is nonzero, and the statement follows.
\end{proof}

\begin{remark}\label{hessiancriticalsometimesdef}When $H_{0}$ lies in a mixed Weyl chamber, the quadratic form in Lemma~\ref{hessianquadformnegativedefinite} can sometimes be degenerate. This happens already for $G = \operatorname{SL}_{3}(\mathbb{R})$, even for completely generic values of $H_0$, and there appears to be no structure in the bad pairs $(H_0, k)$.\end{remark}

\begin{proposition} \label{surjectivityorbitalintegralH} Let $H_{0} \in \mathfrak{a}^{+}$ and $k \in K - \bigcup_{L \in \mathcal L - \{ G\}} M'L$. Then the Hessian of $h_{H_{0}, k}$ is everywhere negative definite.\end{proposition}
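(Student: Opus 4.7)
The plan is to reduce the Hessian computation to the setting of Lemma~\ref{hessianquadformnegativedefinite}, which already establishes negative definiteness of the associated quadratic form for a fixed element of $K$ outside $\bigcup_{L \in \mathcal L - \{G\}} M' L$. The reduction proceeds in two moves: translate the computation from an arbitrary $a \in A$ to $1 \in A$, and then verify that the element of $K$ which appears in the translated form, namely $\kappa(ka)$, still lies outside $\bigcup_{L \in \mathcal L - \{G\}} M' L$.

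First, exploiting that $A$ is abelian, for $b \in A$ one has $kab = n_0 a_0 \kappa(ka) b$ when $n_0 a_0 \kappa(ka)$ is the Iwasawa decomposition of $ka$. Taking Iwasawa projections gives the functional identity $h_{H_0, k}(ab) = h_{H_0, k}(a) + h_{H_0, \kappa(ka)}(b)$, so the Hessian of $h_{H_0, k}$ at $a$ agrees with the Hessian of $h_{H_0, \kappa(ka)}$ at $1$. Evaluating the latter via Lemma~\ref{computationAsecondderivative} at $g = \kappa(ka)$ and dropping $E_{\mathfrak a}$ (since $H_0 \in \mathfrak a$), the Hessian of $h_{H_0, k}$ at $a$ on $X \in \mathfrak a$ becomes
\[ \langle H_0, [\operatorname{Ad}_{\kappa(ka)}(X), E_{\mathfrak n}(\operatorname{Ad}_{\kappa(ka)}(X))] \rangle \,, \]
which is precisely the quadratic form in Lemma~\ref{hessianquadformnegativedefinite} with $k$ there replaced by $\kappa(ka)$.

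The remaining step, which I expect to be the main obstacle, is to check that the condition $k \notin \bigcup_{L \in \mathcal L - \{G\}} M' L$ passes to $\kappa(ka)$. Contrapositively, suppose $\kappa(ka) \in mL$ with $m \in M'$ and $L$ a proper semistandard Levi, and write $\kappa(ka) = m\ell$. The Iwasawa decomposition of $ka$ expresses $k$ as $n_0 a_0 m \ell a^{-1}$, and since $A \subset L$ and $m \in N_K(A)$ normalizes $A$, one rewrites this as $k = n_0 m \ell'$ with $\ell' \in L$. Then Lemma~\ref{kpartofmL} applied to $m\ell' \in mL$ gives $\kappa(k) = \kappa(m\ell') \in mL$, and combined with $k \in K$ and $m \in K$ this forces $k \in m(K \cap L) \subset M'L$, contradicting the hypothesis on $k$. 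Applying Lemma~\ref{hessianquadformnegativedefinite} with $\kappa(ka)$ in place of $k$ then concludes the proof.
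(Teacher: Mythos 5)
Your proof is correct and follows essentially the same route as the paper: reduce the Hessian at $a$ to the quadratic form of Lemma~\ref{hessianquadformnegativedefinite} evaluated at $\kappa(ka)$ (via Lemma~\ref{computationAsecondderivative}) and check that $\kappa(ka)$ stays outside $\bigcup_{L \in \mathcal L - \{G\}} M'L$. The step you flag as the main obstacle is exactly what the paper disposes of with its terse appeal to Lemma~\ref{kpartofmL}; your contrapositive argument just spells out that deduction.
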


\begin{proof}Take $a \in A$ and define $k_{1} = \kappa(k a)$. From Lemma~\ref{kpartofmL} it follows that $k_{1} \notin \bigcup_{L\in \mathcal L - \{ G\}} M' L$. Using Lemma~\ref{computationAsecondderivative} we have for $H \in \mathfrak{a}$ that
\begin{align*}
(\operatorname{Hess}_{a} h_{H_{0}, k})(H) & = \langle H_{0}, [\operatorname{Ad}_{k_{1}}(H), E_{\mathfrak{n}}(\operatorname{Ad}_{k_{1}}(H))] \rangle \\
& = \langle [H_{0}, \operatorname{Ad}_{k_{1}}(H)], E_{\mathfrak n}(\operatorname{Ad}_{k_{1}}(H)) \rangle \,.
\end{align*}
By Lemma~\ref{hessianquadformnegativedefinite}, this quadratic form is negative definite.
\end{proof}

\begin{corollary} \label{criticalpointshnondegenerate}When $H_{0} \in \mathfrak{a}^{\operatorname{gen}, +}$ and $k \in K$, the critical points of $h_{H_{0}, k}$ are nondegenerate.\end{corollary}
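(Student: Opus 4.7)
The plan is to split on whether $k$ lies in the exceptional set $\bigcup_{L \in \mathcal L - \{ G\}} M' L$ or not, since the two preceding results are tailored precisely to these two cases and their hypotheses combine to exactly the conditions defining $\mathfrak a^{\operatorname{gen}, +}$.

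First, suppose $k \notin \bigcup_{L \in \mathcal L - \{ G\}} M' L$. Since $H_0 \in \mathfrak a^{\operatorname{gen}, +} \subset \mathfrak a^{+}$, Proposition~\ref{surjectivityorbitalintegralH} applies and shows that the Hessian of $h_{H_0, k}$ is negative definite at every point of $A$. In particular, at any critical point the Hessian is nondegenerate, which gives the conclusion in this case.

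Next, suppose $k \in \bigcup_{L \in \mathcal L - \{ G\}} M' L$. Since $H_0 \in \mathfrak a^{\operatorname{gen}, +} \subset \mathfrak a^{\operatorname{gen}}$, we have $H_0 \notin \bigcup_{L \in \mathcal L - \{ G\}} \mathfrak a^{L}$, and Proposition~\ref{mLnocriticalpoints} then asserts that $h_{H_0, k}$ has no critical point at all. The nondegeneracy statement is vacuously true.

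Since the two cases exhaust all $k \in K$, this completes the argument. The main point to keep in mind is that the decomposition $\mathfrak a^{\operatorname{gen}, +} = \mathfrak a^{+} \cap \mathfrak a^{\operatorname{gen}}$ is tuned so that the positivity hypothesis of Proposition~\ref{surjectivityorbitalintegralH} and the genericity hypothesis of Proposition~\ref{mLnocriticalpoints} together cover every $k$; no additional computation is needed beyond invoking the two previous results.
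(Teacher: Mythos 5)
Your proof is correct and is essentially the paper's own argument: the paper phrases it contrapositively (a critical point forces $k \notin \bigcup_{L \in \mathcal L - \{G\}} M'L$ by Proposition~\ref{mLnocriticalpoints}, whence Proposition~\ref{surjectivityorbitalintegralH} gives a negative definite Hessian), while you present the same two ingredients as a case split, with the exceptional case handled vacuously. No substantive difference.
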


\begin{proof}If $h_{H_{0}, k}$ has a critical point, then $k \notin \bigcup_{L \in \mathcal L - \{ G\}} M'L$ by Proposition~\ref{mLnocriticalpoints}. The Hessian of $h_{H_{0}, k}$ is then everywhere nondegenerate by Proposition~\ref{surjectivityorbitalintegralH}. In particular, its critical points are nondegenerate.
\end{proof}

\begin{proposition} \label{uniquenesscriticalpointcartan}When $H_{0} \in \mathfrak{a}^{\operatorname{gen}, +}$ and $k \in K$, the function $h_{H_{0}, k}$ has at most one critical point, which maximizes $h_{H_{0}, k}$.\end{proposition}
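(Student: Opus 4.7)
The plan is to observe that this statement is essentially a direct consequence of the preceding work, combined with standard facts about strictly concave functions on $\mathbb{R}^n$. First I would dispose of the trivial case: if $h_{H_0, k}$ has no critical point, there is nothing to prove. So assume a critical point exists. By Proposition~\ref{mLnocriticalpoints}, this forces $k \notin \bigcup_{L \in \mathcal L - \{G\}} M' L$, since $H_0 \in \mathfrak a^{\mathrm{gen}, +}$ lies outside every proper $\mathfrak a^L$. Proposition~\ref{surjectivityorbitalintegralH} then applies and gives that the Hessian of $h_{H_0, k}$ is negative definite at \emph{every} point of $A$.

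Next I would transfer this pointwise negative-definiteness into a global strict concavity statement. Because $A$ is abelian and simply connected, the exponential map $\exp : \mathfrak a \to A$ is a diffeomorphism and satisfies $\exp(H') \exp(tH) = \exp(H' + tH)$. Consequently, for any $H, H' \in \mathfrak a$,
\[
\frac{d^2}{dt^2}\bigg|_{t=0} (h_{H_0, k} \circ \exp)(H' + tH) = (\operatorname{Hess}_{\exp(H')} h_{H_0, k})(H, H),
\]
which by Proposition~\ref{surjectivityorbitalintegralH} is strictly negative for $H \neq 0$. Thus $f := h_{H_0, k} \circ \exp$ is a smooth function on the Euclidean space $\mathfrak a$ whose Hessian is negative definite everywhere, so $f$ is strictly concave.

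Finally I invoke the standard fact that a strictly concave $C^2$ function on $\mathbb R^n$ has at most one critical point, and that such a critical point, if it exists, is the unique global maximum. The one-line justification: if $x_0, x_1 \in \mathfrak a$ were two distinct critical points of $f$, then $\varphi(t) := f((1-t)x_0 + t x_1)$ would be strictly concave on $[0,1]$ with $\varphi'(0) = \varphi'(1) = 0$, contradicting the strict decrease of $\varphi'$. The same one-dimensional argument applied along the ray from a critical point to an arbitrary $x$ shows the critical point is a strict global max. Pulling this back through $\exp$ yields the claim for $h_{H_0, k}$.

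I do not expect any genuine obstacle here; the only point that deserves care is the identification of the intrinsic Hessian on $A$ (defined via left-invariant vector fields, as used in \S\ref{sectionderivatives} and Lemma~\ref{computationAsecondderivative}) with the ordinary Hessian on the vector space $\mathfrak a$ under $\exp$. This identification is legitimate precisely because $A$ is abelian, which makes $\exp$ a group isomorphism and lets the pointwise Hessian bound integrate into global concavity; without commutativity one would need a convexity argument in the Riemannian sense, which is not needed here.
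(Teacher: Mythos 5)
Your proof is correct and takes essentially the same route as the paper: the existence of a critical point forces $k \notin \bigcup_{L \in \mathcal L - \{G\}} M'L$ by Proposition~\ref{mLnocriticalpoints}, Proposition~\ref{surjectivityorbitalintegralH} then gives an everywhere negative definite Hessian, and uniqueness plus maximality follow by strict concavity. The only difference is that you make explicit the (correct) identification, via $\exp : \mathfrak a \to A$ and commutativity of $A$, of the left-invariant Hessian with the Euclidean Hessian on $\mathfrak a$, a step the paper leaves implicit.
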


\begin{proof}By Proposition~\ref{mLnocriticalpoints}, the existence of one critical point implies that $k \notin \bigcup_{L \in \mathcal L - \{ G\}} M'L$. Proposition~\ref{surjectivityorbitalintegralH} then implies that the Hessian of $h_{H_{0}, k}$ is everywhere negative definite, so that there can be no other critical point and any critical point maximizes $h_{H_{0}, k}$.
\end{proof}

\subsection{Structure of the level sets}

\label{sectionstructurelevelsets}

We have all the necessary information to describe the set $\mathcal C(G, H_0)$.

\begin{proposition} \label{genericcriticalsetstructuresemisimple} When $H_{0} \in \mathfrak{a}^{\operatorname{gen}}$, the set $\mathcal{C}(G, H_{0})$ is a smooth manifold of dimension $\dim K - \dim A$ that varies smoothly with $H_{0}$.
\end{proposition}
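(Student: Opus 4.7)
The plan is to apply the regular-value theorem, first pointwise in $H_0$ and then with $H_0$ varying as a parameter; all the technical work has already been done in Corollary~\ref{maintermgenericsubmersionatcriticalsetreductive}, so the remaining task is essentially to assemble the pieces.

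Since $G$ is semisimple, $\mathfrak{g}_{ss} = \mathfrak{g}$, so that $\mathfrak{a} \cap \mathfrak{g}_{ss} = \mathfrak{a}$ and $f_{G, H_0}$ has target $\mathfrak{a}$, with $\mathcal{C}(G, H_0) = f_{G, H_0}^{-1}(0)$. For $H_0 \in \mathfrak{a}^{\operatorname{gen}}$, Corollary~\ref{maintermgenericsubmersionatcriticalsetreductive} asserts that $f_{G, H_0}$ is a submersion at every point of $\mathcal{C}(G, H_0)$. The regular-value theorem then gives that $\mathcal{C}(G, H_0)$ is a smooth embedded submanifold of $K$ of codimension $\dim \mathfrak{a} = \dim A$, hence of dimension $\dim K - \dim A$. (Non-emptiness is guaranteed by Corollary~\ref{Cnonempty}, but is not needed for the manifold claim.)

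For smooth variation in $H_0$, I would bundle the whole family into one map. Consider
\begin{align*}
F : K \times \mathfrak{a}^{\operatorname{gen}} &\to \mathfrak{a}, \\
(k, H_0) &\mapsto E_{\mathfrak{a}}(\operatorname{Ad}_{k^{-1}}(H_0)),
\end{align*}
so that $F^{-1}(0) = \{ (k, H_0) : k \in \mathcal{C}(G, H_0)\}$. The partial differential of $F$ in the $K$-variable at $(k, H_0)$ is exactly $(Df_{G, H_0})_k$, which by Corollary~\ref{maintermgenericsubmersionatcriticalsetreductive} is surjective whenever $(k, H_0) \in F^{-1}(0)$. Hence $(DF)_{(k, H_0)}$ is surjective on $F^{-1}(0)$, and the regular-value theorem shows that $F^{-1}(0)$ is a smooth submanifold of $K \times \mathfrak{a}^{\operatorname{gen}}$ of dimension $\dim K$.

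Finally, the projection $\pi : F^{-1}(0) \to \mathfrak{a}^{\operatorname{gen}}$ is a smooth submersion: given $H_1 \in T_{H_0}\mathfrak{a}^{\operatorname{gen}} = \mathfrak{a}$, surjectivity of the $K$-partial of $F$ lets us solve for $X \in \mathfrak{k}$ so that $(X, H_1) \in \ker (DF)_{(k, H_0)} = T_{(k, H_0)} F^{-1}(0)$, and this tangent vector maps to $H_1$. The fiber of $\pi$ over $H_0$ is (the graph of) $\mathcal{C}(G, H_0)$, so the total space of the family is a smooth submersion over the parameter space, which is the intended meaning of ``varies smoothly with $H_0$''. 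I do not expect any real obstacle in writing this out; the hardest part — the submersion property — is precisely what Corollary~\ref{maintermgenericsubmersionatcriticalsetreductive} provides.
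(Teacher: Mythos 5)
Your proposal is correct and follows essentially the same route as the paper: Corollary~\ref{maintermgenericsubmersionatcriticalsetreductive} plus the regular-value theorem gives the manifold statement, and smooth dependence on $H_0$ comes from the submersion property (the paper invokes the local normal form for submersions to get parametrizations depending smoothly on $H_0$, while you package the same fact as a parametrized map $F$ on $K \times \mathfrak{a}^{\operatorname{gen}}$ whose zero set submerses onto the parameter space). The only cosmetic difference is that the paper also cites Corollary~\ref{Cnonempty} for non-emptiness, which you correctly note is not needed for the manifold claim itself.
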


\begin{proof} By Corollary~\ref{Cnonempty}, this set is nonempty, and by Lemma~\ref{maintermgenericsubmersionatcriticalsetreductive}, $f_{G, H_{0}}$ is a submersion at the points of $\mathcal{C}(G, H_{0})$. Thus $\mathcal{C}(G, H_{0})$ is a smooth manifold of dimension $\dim K - \dim A$, and from the local normal form for submersions it follows that it admits local parametrizations that depend smoothly on $H_{0}$.
\end{proof}

\begin{lemma}\label{CinvariantbyM} When $H_0 \in \mathfrak a$, the set $\mathcal{C}(G, H_{0})$ is invariant on both sides by $M$.
\end{lemma}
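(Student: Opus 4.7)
The plan is to unwind the definition $\mathcal{C}(G, H_0) = \{k \in K : \operatorname{Ad}_{k^{-1}}(H_0) \perp \mathfrak{a}\}$ and use the two properties of $M = Z_K(A)$: first, that $\operatorname{Ad}_M$ fixes $\mathfrak{a}$ pointwise (by definition), and second, that $\operatorname{Ad}_M$ acts on $\mathfrak{g}$ by isometries of $\langle \cdot, \cdot \rangle_\theta$ (since $M \subset K$), hence preserves $\mathfrak{a}^\perp$ setwise.

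For left-invariance, take $m \in M$ and $k \in \mathcal{C}(G, H_0)$. Then
\[ \operatorname{Ad}_{(mk)^{-1}}(H_0) = \operatorname{Ad}_{k^{-1}} \operatorname{Ad}_{m^{-1}}(H_0) = \operatorname{Ad}_{k^{-1}}(H_0), \]
using $H_0 \in \mathfrak{a}$ and the fact that $m^{-1}$ centralizes $\mathfrak{a}$. This already lies in $\mathfrak{a}^\perp$, so $mk \in \mathcal{C}(G, H_0)$. For right-invariance, take instead
\[ \operatorname{Ad}_{(km)^{-1}}(H_0) = \operatorname{Ad}_{m^{-1}}\bigl( \operatorname{Ad}_{k^{-1}}(H_0) \bigr). \]
Since $\operatorname{Ad}_{k^{-1}}(H_0) \in \mathfrak{a}^\perp$ and $\operatorname{Ad}_{m^{-1}}$ preserves $\mathfrak{a}^\perp$ (isometry argument above), the right hand side still lies in $\mathfrak{a}^\perp$, so $km \in \mathcal{C}(G, H_0)$.

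There is no real obstacle: the lemma is essentially a bookkeeping statement about the two defining properties of $M$. The only point worth stating carefully is the distinction between ``$M$ fixes $\mathfrak{a}$ pointwise'' (which gives the left-invariance directly) and ``$M$ preserves $\mathfrak{a}^\perp$'' (which is what right-invariance needs, and which follows from $M \subset K$ together with $\operatorname{Ad}_M(\mathfrak{a}) = \mathfrak{a}$).
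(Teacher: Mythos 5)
Your proof is correct and matches the paper's (one-line) argument: the paper also deduces both invariances directly from the definition of $\mathcal{C}(G,H_0)$, using that $M$ centralizes $\mathfrak{a}$ and that $\operatorname{Ad}_M$ preserves the relevant inner product (phrased there as $\operatorname{Ad}_G$-invariance of the Killing form, which on $\mathfrak{p}$ agrees with $\langle\cdot,\cdot\rangle_\theta$). Your spelling out of the left/right cases is just a more explicit version of the same argument.
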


\begin{proof}This is clear from the definition \eqref{definitionCreductive}, using $\operatorname{Ad}_G$-invariance of the Killing form.
\end{proof}

\begin{remark}Many arguments simplify when $\mathcal C(G, H_0)$ is but a finite union of cosets of $M$ in $K$, effectively reducing arguments to the case of $G = \operatorname{SL}_2(\mathbb R)$. However, the set $\mathcal C(G, H_0)$ is usually much bigger; see Proposition~\ref{Klargecharacterization}.\end{remark}

\begin{corollary}\label{levelsetGcodim}When $H_{0} \in \mathfrak{a}^{\operatorname{gen}}$ and $a \in A$, the set of $g \in G$ for which $a$ is a critical point of $h_{H_0, g}$ is a smooth submanifold of codimension $\dim(A)$. It is stable on the left by $NAM$ and on the right by $M$.\end{corollary}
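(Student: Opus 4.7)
By Lemma~\ref{criticalpointsintermsofC}, the set in question is
\[ X_a \;=\; \{ g \in G : \kappa(g a) \in \mathcal{C}(G, H_0) \} \;=\; \phi^{-1}(\mathcal{C}(G, H_0)) \,, \]
where $\phi : G \to K$, $g \mapsto \kappa(ga)$. The plan is to show that $\phi$ is a submersion and then invoke Proposition~\ref{genericcriticalsetstructuresemisimple}, which provides that $\mathcal{C}(G, H_0)$ is a smooth submanifold of $K$ of codimension $\dim A$.

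To see $\phi$ is a submersion, apply the chain rule together with Lemma~\ref{differentialkpart} and the formula $(DR_a)_g = \operatorname{Ad}_{a^{-1}}$ from \S\ref{sectionderivatives}:
\[ (D\phi)_g \;=\; \operatorname{Ad}_{\kappa(ga)^{-1}} \circ E_{\mathfrak{k}} \circ \operatorname{Ad}_{\kappa(ga)} \circ \operatorname{Ad}_{a^{-1}} \,. \]
The outer two terms and $\operatorname{Ad}_{\kappa(ga)}$ are automorphisms of $\mathfrak g$ (resp.\ $\mathfrak k$), and $E_{\mathfrak{k}} : \mathfrak g \to \mathfrak k$ is surjective, so $(D\phi)_g$ is surjective. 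For nonemptiness of $X_a$, Corollary~\ref{Cnonempty} gives some $k_0 \in \mathcal{C}(G, H_0)$, and then $g = k_0 a^{-1}$ satisfies $\phi(g) = k_0 \in \mathcal{C}(G, H_0)$. Hence $X_a$ is a smooth submanifold of $G$ of codimension $\dim A$.

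It remains to verify the invariance properties. A direct manipulation of the Iwasawa decomposition gives $\kappa(namh) = m\kappa(h)$ for $n \in N$, $a' \in A$, $m \in M$, $h \in G$: write $h = n(h) a(h) \kappa(h)$ and push $m$ past $n(h)$ using that $M$ normalizes $N$, then past $a(h)$ using that $M$ centralizes $A$. Combined with $\kappa(hm) = \kappa(h)m$ (immediate from $m \in K$) and the commutation of $m \in M$ with $a \in A$, we obtain for $g \in X_a$:
\[ \kappa(nam\, g\, a) = m\, \kappa(ga) \in m\, \mathcal{C}(G, H_0) \,, \qquad \kappa(gm \cdot a) = \kappa(ga)\, m \in \mathcal{C}(G, H_0)\, m \,. \]
Both land in $\mathcal{C}(G, H_0)$ by Lemma~\ref{CinvariantbyM}, giving left-invariance under $NAM$ and right-invariance under $M$.

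The only nontrivial step is the submersion check, and it follows at once from the derivative formulas already established in \S\ref{sectionderivatives}; I do not anticipate a real obstacle.
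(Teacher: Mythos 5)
Your proof is correct and follows essentially the same route as the paper: both arguments rest on Proposition~\ref{genericcriticalsetstructuresemisimple} for the manifold structure of $\mathcal C(G,H_0)$, on Lemma~\ref{criticalpointsintermsofC} to identify the set in question, and on Lemma~\ref{CinvariantbyM} plus the standard Iwasawa manipulations for the $NAM$- and $M$-stability. The only (cosmetic) difference is that you transfer the submanifold structure to $G$ via the submersion $g \mapsto \kappa(ga)$ and the preimage theorem, whereas the paper writes the set explicitly as $NA\cdot\kappa(\mathcal C(G,H_0)a^{-1})$ and uses the product structure $G = NAK$; both give codimension $\dim(A)$.
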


\begin{proof}From \eqref{heightfunctionreducetokpart} we saw that $h_{H_0, g}$ and $h_{H_0, \kappa(g)}$ have the same critical points. Therefore by Lemma~\ref{levelsetszerocritpoint} the set in question is equal to $NA \cdot \kappa(\mathcal C(G, H_0)a^{-1})$, which has codimension $\dim(A)$ in $G = NAK$ by Proposition~\ref{genericcriticalsetstructuresemisimple}. It is stable on the left by $NA$. It is stable on the left by $M$ because $M$ normalizes $NA$, which implies that $\kappa(\cdot)$ commutes with left multiplication by $M$, and $\mathcal C(G, H_0)$ is stable on the left by $M$ by Lemma~\ref{CinvariantbyM}. Similarly, invariance on the right follows from Lemma~\ref{CinvariantbyM}, using that $\kappa(\cdot)$ commutes with right multiplication by $K$.
\end{proof}

When $H_0 \in \mathfrak a$, define $\mathcal{R}_{H_{0}} \subset G$ to be the set of elements $g$ for which the function $h_{H_{0}, g}$ has a critical point.

\begin{proposition} \label{regularsetGopen} When $H_{0} \in \mathfrak{a}^{\operatorname{gen}, +}$, the set $\mathcal{R}_{H_0} \subset G$ is open and stable on the left by $NAM$.\end{proposition}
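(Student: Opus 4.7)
The plan is to handle the two statements independently. Left invariance under $NAM$ will follow from a short direct computation using that $h_{H_0, g}$ and $h_{H_0, \kappa(g)}$ have the same critical points (equation \eqref{heightfunctionreducetokpart}). Openness will follow from the implicit function theorem, applied to a natural map whose partial derivative reproduces the Hessian known to be nondegenerate.

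For the $NAM$ invariance, let $g' = n_0 a_0' m_0 g$ with $n_0 \in N$, $a_0' \in A$, $m_0 \in M$. Since $M$ commutes with $A$ and normalizes $N$, a direct computation gives $\kappa(g') = m_0 \kappa(g)$, and for any $a \in A$ the element $m_0 \kappa(g) a$ has the same $A$-projection as $\kappa(g) a$ (the factor $m_0$ can be pushed past $n(\kappa(g)a) a(\kappa(g)a) \kappa(\kappa(g)a)$ without affecting the $A$-component). Combined with \eqref{heightfunctionreducetokpart}, this yields $h_{H_0, g'}(a) - h_{H_0, g'}(1) = h_{H_0, g}(a) - h_{H_0, g}(1)$, so $h_{H_0, g'}$ and $h_{H_0, g}$ have exactly the same critical points. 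Hence $g \in \mathcal{R}_{H_0}$ iff $g' \in \mathcal{R}_{H_0}$.

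For openness, define the smooth map
\[ \Phi : G \times A \to \mathfrak{a}, \qquad \Phi(g, a) = f_{G, H_0}(\kappa(ga)) = E_{\mathfrak{a}}(\operatorname{Ad}_{\kappa(ga)^{-1}} H_0). \]
Because $\mathfrak{n}$ and $\mathfrak{k}$ are both orthogonal to $\mathfrak{a}$ under $\langle\cdot,\cdot\rangle$, and the differential $(Dh_{H_0,g})_a(H) = \langle H_0, \operatorname{Ad}_{\kappa(ga)}(H)\rangle = \langle \operatorname{Ad}_{\kappa(ga)^{-1}}(H_0), H\rangle$ vanishes on $\mathfrak a$ precisely when $\Phi(g,a)=0$, the equation $\Phi(g,a)=0$ characterizes critical points of $h_{H_0, g}$. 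Now fix $g_0 \in \mathcal{R}_{H_0}$ with critical point $a_0$, so that $\Phi(g_0, a_0) = 0$. For any $H_1, H_2 \in \mathfrak{a}$,
\[ \langle (D_a\Phi)_{(g_0, a_0)}(H_1), H_2 \rangle = L_{H_1} \langle \Phi(g_0, \cdot), H_2\rangle\big\rvert_{a_0} = (\operatorname{Hess}_{a_0} h_{H_0, g_0})(H_1, H_2). \]
By Corollary~\ref{criticalpointshnondegenerate} the right-hand side is a nondegenerate quadratic form, and since $\langle\cdot,\cdot\rangle$ restricts to a nondegenerate form on $\mathfrak{a}$, the linear map $(D_a\Phi)_{(g_0, a_0)} : \mathfrak{a} \to \mathfrak{a}$ is invertible. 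The implicit function theorem then produces a neighborhood $U$ of $g_0$ in $G$ and a smooth map $U \to A$, $g \mapsto a(g)$, with $a(g_0) = a_0$ and $\Phi(g, a(g)) = 0$ for all $g \in U$. Thus $U \subset \mathcal{R}_{H_0}$, proving that $\mathcal{R}_{H_0}$ is open.

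The only real content is the identification of the partial derivative $D_a\Phi$ with the Hessian of $h_{H_0, g_0}$, which unlocks the implicit function theorem; everything else is bookkeeping about Iwasawa decompositions.
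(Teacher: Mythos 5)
Your proof is correct and follows essentially the same route as the paper: openness via the implicit function theorem applied to the critical-point equation, with invertibility of the $\mathfrak a$-derivative coming from the nondegeneracy in Corollary~\ref{criticalpointshnondegenerate}. The only difference is that you verify the $NAM$-stability by a direct computation with the Iwasawa factors, whereas the paper cites Corollary~\ref{levelsetGcodim}, whose proof amounts to the same observation.
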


\begin{proof} Take $g_0 \in \mathcal{R}_{H_0}$. By Proposition~\ref{criticalpointshnondegenerate}, the critical point of $h_{H_{0}, g_0}$ is nondegenerate, call it $\xi$. We may reformulate this by saying that the map
\begin{align*}
A & \to \mathfrak{a}^{*} \\
a & \mapsto (D h_{H_{0}, g_0})_{a}
\end{align*}
has invertible differential at the level set above $0$, which is the singleton $\{\xi \}$. By the implicit function theorem applied to this smooth map with parameter $g \in G$, it follows that it has a zero for all $g$ in a neighborhood of $g_0$, which is to say that $\mathcal{R}_{H_0}$ is open in $G$. The stability under $NAM$ follows from Corollary~\ref{levelsetGcodim}.
\end{proof}

\begin{remark} \label{atonegativechamber} Let $\rho : G \to \operatorname{SL}(V)$ be any representation with finite kernel, and let $\Omega_{\mathcal S^0} \subset G$ be the set defined in \S\ref{npartcanonicalpartitions}. It is open and dense by Proposition~\ref{omegaopendense}. It is reasonable to expect that for $g \in \Omega_{\mathcal S^0}$ and $\lambda \in (\mathfrak a^*)^+$ we have $\lambda(H(ga)) \to - \infty$ as $a \to \infty$ in $A$. This would then imply that when $H_0 \in \mathfrak a^+$ the function $h_{H_0, g}$ has a critical point for all $g \in \Omega_{S^0}$, in particular, for all $g$ in an open dense subset of $G$ that does not depend on $H_0$. In fact, when $H_0 \in \mathfrak a^{\operatorname{gen}, +}$ it is reasonable to expect that $\mathcal R_{H_0} = \Omega_{\mathcal S^0}$.

The statement about the limit and the corollary that $\mathcal R_{H_0} \supset \Omega_{\mathcal S^0}$ are certainly true when $G = \operatorname{SL}_n(\mathbb R)$ and $\rho = \operatorname{Std}$. Then the entries of $H(ga)$ can be expressed in terms of quotients of sums of subdeterminants (using \eqref{normprojwedgesingle}), and when $g \in \Omega_{\mathcal S^0}$ those entries are bounded from above and below by constants times the entries of $a$, but arranged in decreasing order. The general case would require a more careful analysis of the weights of the exterior powers of $V$. This would in particular give an alternative proof of Corollary~\ref{Cnonempty}, whose only proof we have now is very technical and little intuitive.\end{remark}

\subsection{Some dimension bounds}

\begin{lemma}\label{complexrootsbigger}Let $\mathfrak g$ be a complex simple Lie algebra of rank $r$, $\mathfrak h$ a Cartan subalgebra with roots $\Sigma$ and a choice of simple roots $\Pi$. Let $\alpha \in \Pi$. Then there exist at least $r$ roots $\beta \in \Sigma$ with the property that $\beta \geq \alpha$.\end{lemma}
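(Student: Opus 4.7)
The plan is to interpolate between $\alpha$ and the highest root $\theta \in \Sigma$ by a chain of positive roots, each obtained from the previous by adding a single simple root, and then to observe that this chain has length at least $r$.

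First I would invoke the standard chain lemma for root systems: if $\alpha', \beta' \in \Sigma^{+}$ satisfy $\beta' \geq \alpha'$, then there is a sequence $\alpha' = \beta_0, \beta_1, \ldots, \beta_k = \beta'$ of positive roots with $\beta_{i+1} - \beta_i \in \Pi$ for all $i$, and necessarily $k = \operatorname{ht}(\beta') - \operatorname{ht}(\alpha')$. This is proved by induction on the height difference, using that when $\beta' > \alpha'$, at least one simple root $\gamma$ in the support of $\beta' - \alpha'$ can be ``peeled off'', in the sense that $\beta' - \gamma \in \Sigma^{+}$ and $\beta' - \gamma \geq \alpha'$.

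Applying this with $\alpha' = \alpha$ and $\beta' = \theta$ (the highest root, which dominates every positive root) yields a chain of $\operatorname{ht}(\theta)$ distinct positive roots, all of which satisfy $\beta_i \geq \alpha$: the simple-root coefficient of $\alpha$ in $\beta_i$ starts at $1$ and is non-decreasing along the chain. So it suffices to show $\operatorname{ht}(\theta) \geq r$.

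Writing $\theta = \sum_{\gamma \in \Pi} c_\gamma \gamma$, one has $\operatorname{ht}(\theta) = \sum_\gamma c_\gamma$, so the remaining task is to show $c_\gamma \geq 1$ for every $\gamma \in \Pi$. If some $c_\gamma = 0$, then on the one hand $\langle \theta, \gamma \rangle \leq 0$ since distinct simple roots have nonpositive inner product; on the other hand, maximality of $\theta$ (the sum $\theta + \gamma$ cannot be a root) forces $\langle \theta, \gamma \rangle \geq 0$. Hence $\langle \theta, \gamma \rangle = 0$, meaning $\{\gamma\}$ is orthogonal to the support of $\theta$; this disconnects the Dynkin diagram and contradicts simplicity of $\mathfrak{g}$. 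Therefore every $c_\gamma \geq 1$, whence $\operatorname{ht}(\theta) \geq r$, and the chain constructed above exhibits at least $r$ positive roots $\beta \geq \alpha$. The main obstacle is really only recalling the chain lemma; once that is in hand, the rest follows from standard properties of the highest root in an irreducible root system.
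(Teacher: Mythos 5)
Your argument is correct, but it takes a genuinely different route from the paper's. The paper proceeds by induction on connected subsets of the Dynkin diagram: starting from $S=\{\alpha\}$, irreducibility supplies at each step a simple root $\gamma \notin S$ not orthogonal to $\operatorname{span}(S)$, and the standard fact that $\langle \beta,\gamma\rangle<0$ forces $\beta+\gamma\in\Sigma$ then produces a new root $\geq\alpha$ outside the previous span; this yields $r$ \emph{linearly independent} roots $\geq\alpha$. You instead build a saturated chain from $\alpha$ up to the highest root $\theta$ and bound its length from below by $\operatorname{ht}(\theta)\geq r$. Both work: your version gives $\operatorname{ht}(\theta)$ roots above $\alpha$ (usually more than $r$), while the paper's gives linear independence (which the application does not need) and relies only on two very elementary facts from Knapp. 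Two small remarks. First, the chain lemma you invoke is true (the root poset is graded by height), but your parenthetical sketch, which only peels simple roots off the top of $\beta'$, is not quite how the standard induction goes: from $\langle\beta'-\alpha',\beta'-\alpha'\rangle>0$ one finds $\gamma$ in the support of $\beta'-\alpha'$ with $\langle\beta'-\alpha',\gamma\rangle>0$, and then either $\langle\beta',\gamma\rangle>0$, so one may peel $\beta'-\gamma$ from the top, or else $\langle\alpha',\gamma\rangle<0$, in which case one must instead grow from the bottom via $\alpha'+\gamma\in\Sigma^{+}$; the induction on $\operatorname{ht}(\beta')-\operatorname{ht}(\alpha')$ then closes. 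Second, the full-support property of $\theta$ is immediate from the domination property you already use: $\theta\geq\gamma$ for every simple $\gamma$ gives $c_\gamma\geq 1$ directly; your orthogonality argument is also fine, but as stated a single $\gamma$ with $c_\gamma=0$ being orthogonal to $\operatorname{supp}(\theta)$ does not by itself disconnect the diagram --- one should apply the argument to all simple roots with $c_\gamma=0$ at once to split $\Pi$ into two mutually orthogonal nonempty parts.
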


\begin{proof}We will show by induction the following statement: for every $1 \leq m \leq r$, there exists a set of simple roots $S$ with $|S| = m$ whose span contains $m$ linearly independent roots $\beta \geq \alpha$. For $m = 1$ we take $S = \{\alpha\}$. Assume we have found $S$ of cardinality $m < r$. Because $\mathfrak g$ is simple, $\Sigma$ is irreducible, so there exists a simple root $\gamma \in \Pi - S$ which is not orthogonal to all roots in $S$. Thus there exists a positive root $\beta \in \operatorname{span}(S)$ with $\beta \geq \alpha$ which is not orthogonal to $\gamma$. Because $\langle \beta, \gamma \rangle_\theta \leq 0$ by \cite[Lemma 2.51]{Knapp2002}, we must have $\langle \beta, \gamma \rangle_\theta < 0$. By \cite[Proposition 2.48]{Knapp2002} this implies that $\beta + \gamma$ is a root. We may now take $S' = S \cup \{\gamma\}$, whose span contains $m$ linearly independent roots $\geq \alpha$ in $\operatorname{span}(S)$ together with the root $\beta + \gamma \geq \alpha$. This completes the induction.
\end{proof}

\begin{lemma}\label{simplewithfewroots}Let $\mathfrak g$ be a real simple Lie algebra with Cartan decomposition $\mathfrak k \oplus \mathfrak p$, maximal abelian subalgebra $\mathfrak a \subset \mathfrak p$ with restricted roots $\Sigma \subset \mathfrak a^*$ and choice of positive roots $\Sigma^+$. The following are equivalent:
\begin{enumerate}
\item $\mathfrak g$ is compact or isomorphic to $\mathfrak{sl}_2(\mathbb R)$.
\item $\sum_{\alpha \in \Sigma^+} m(\alpha) = \dim(\mathfrak a)$.
\end{enumerate}
\end{lemma}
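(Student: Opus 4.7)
The direction (1) $\Rightarrow$ (2) is a direct verification: for compact $\mathfrak g$ one has $\mathfrak a = 0$ and $\Sigma = \varnothing$, so both sides vanish, while for $\mathfrak{sl}_2(\mathbb R)$ both sides equal $1$.

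For the converse, I would first assemble a chain of inequalities. Assume $\mathfrak g$ is noncompact and set $r = \dim \mathfrak a \geq 1$. Since $\Pi \subset \Sigma^+$ and $|\Pi| = r$, one has
\[ \sum_{\alpha \in \Sigma^+} m(\alpha) \geq |\Sigma^+| \geq |\Pi| = r \,. \]
Equality in (2) therefore forces $m(\alpha) = 1$ for every $\alpha \in \Sigma^+$ and $\Sigma^+ = \Pi$. The latter condition says that no sum of two distinct simple roots is a root, so by the standard fact that non-orthogonal simple roots sum to a root, the simple roots are pairwise orthogonal. Because $\mathfrak g$ is simple the restricted root system $\Sigma$ is irreducible, and pairwise orthogonality of $\Pi$ then forces $|\Pi| = 1$. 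Hence $r = 1$, and $|\Sigma^+| = 1$ rules out the nonreduced possibility $BC_1$, leaving $\Sigma = \{\pm \alpha\}$ of type $A_1$ with $m(\alpha) = 1$.

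The remaining and, in my view, main step is to upgrade this structural information to $\mathfrak g \cong \mathfrak{sl}_2(\mathbb R)$. The restricted root space decomposition gives $\mathfrak g = \mathfrak m \oplus \mathfrak a \oplus \mathfrak g_\alpha \oplus \mathfrak g_{-\alpha}$ with the last three summands each one-dimensional, so it suffices to show $\mathfrak m = 0$. The plan is to observe that the adjoint action of $\mathfrak m$ on the one-dimensional space $\mathfrak g_\alpha$ is a Lie algebra homomorphism $\mathfrak m \to \mathbb R$ which vanishes because $\mathfrak m \subset \mathfrak k$ is compact; applying $\theta$ gives the same for $\mathfrak g_{-\alpha}$, and $\mathfrak m$ centralizes $\mathfrak a$ by definition. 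Hence $\mathfrak m$ is an ideal of $\mathfrak g$, and simplicity combined with $\mathfrak m \subsetneq \mathfrak g$ forces $\mathfrak m = 0$. Then $\dim \mathfrak g = 3$, and the only three-dimensional noncompact real simple Lie algebra is $\mathfrak{sl}_2(\mathbb R)$.
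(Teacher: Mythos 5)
Your argument is correct, but it takes a genuinely different route from the paper's. The paper argues through the complexification: either $\mathfrak g_{\mathbb C}$ is non-simple, in which case every restricted root has multiplicity $2$, or $\mathfrak g_{\mathbb C}$ is simple, in which case Lemma~\ref{complexrootsbigger} counts the complex roots lying over a simple restricted root and forces $\mathfrak g_{\mathbb C} \cong \mathfrak{sl}_2(\mathbb C)$. You instead stay entirely inside the restricted root system: equality forces all multiplicities to equal $1$ and $\Sigma^+ = \Pi$; since non-orthogonal roots sum to a root (\cite[Proposition 2.48]{Knapp2002}, valid also for nonreduced systems), the simple restricted roots are pairwise orthogonal, irreducibility of $\Sigma$ gives rank one with $\Sigma = \{\pm\alpha\}$ and $m(\alpha) = 1$, and then a structural argument ($\mathfrak m$ is a proper ideal, hence $0$, hence $\dim \mathfrak g = 3$) identifies $\mathfrak{sl}_2(\mathbb R)$. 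This bypasses the complexification dichotomy and Lemma~\ref{complexrootsbigger} altogether, at the cost of two inputs the paper does not use: irreducibility of the restricted root system of a simple real Lie algebra (true and standard, but it deserves a citation or a short argument), and the classification of three-dimensional simple real Lie algebras.

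One justification should be repaired. That the action of $\mathfrak m$ on the line $\mathfrak g_\alpha$ ``vanishes because $\mathfrak m$ is compact'' is not valid as an abstract statement: a compact Lie algebra can have nonzero abelian quotients (already $\mathfrak m \cong \mathfrak u(1)$ occurs, e.g.\ for $\mathfrak{su}(2,1)$), so a Lie algebra homomorphism $\mathfrak m \to \mathbb R$ need not vanish on the center of $\mathfrak m$. The correct reason uses the stronger information available here: for $X \in \mathfrak m \subset \mathfrak k$, the operator $\operatorname{ad}_X$ is skew-symmetric with respect to $\langle \cdot, \cdot \rangle_\theta$ and preserves $\mathfrak g_\alpha$, and a skew-symmetric operator on a one-dimensional space is zero; equivalently, the compact connected group $M^{\circ}$ acts on the line $\mathfrak g_\alpha$ through a compact, hence finite, subgroup of $\operatorname{GL}_1(\mathbb R)$, so the infinitesimal action is trivial. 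With this fix $[\mathfrak m, \mathfrak g_{\pm\alpha}] = 0$ and $[\mathfrak m, \mathfrak a] = 0$, so $\mathfrak m$ is indeed an ideal and the rest of your argument goes through.
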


\begin{proof}Write $r = \dim(\mathfrak a)$.
Note that $\Sigma$ spans $\mathfrak a^*$ so that we always have
\[ \sum_{\alpha \in \Sigma^+} m(\alpha) \geq \#\Sigma^+ \geq r \,. \]
It is clear that equality holds if $\mathfrak g$ is compact or isomorphic to $\mathfrak{sl}_2(\mathbb R)$.

Now let $\mathfrak g$ be any simple Lie algebra. We will show that equality only holds for the examples stated. There are two cases, coming from the classification of simple real Lie algebras.

Suppose first that the complexification $\mathfrak g_{\mathbb C}$ is not simple. By \cite[Theorem 6.94]{Knapp2002} $\mathfrak g$ is the restriction of scalars of a simple complex Lie algebra. Let $\mathfrak h \supset \mathfrak a$ be a Cartan subalgebra. It follows as in \cite[\S VI.11, p.425]{Knapp2002} that all roots of $\mathfrak h$ have nonzero restriction to $\mathfrak a$ (and all restrictions are different) and all restricted roots have multiplicity $2$. It follows that
\[ \sum_{\alpha \in \Sigma^+} m(\alpha) = 2\#\Sigma^+ \geq 2r \geq r + 1 \,, \]
because $r \geq 1$, meaning that equality does not hold in this case.

Suppose now that the complexification $\mathfrak g_{\mathbb C}$ is simple. Let $\Pi$ be a system of simple restricted roots of $\mathfrak a$. Let $\theta$ be a Cartan involution corresponding to $\mathfrak k \oplus \mathfrak p$ and $\mathfrak h \subset \mathfrak g$ a $\theta$-stable Cartan subalgebra containing $\mathfrak a$. We may choose an ordering on $\mathfrak h_{\mathbb C}^*$ such that the restrictions of positive roots of $\mathfrak h_{\mathbb C}$ to $\mathfrak a$ are either zero or positive. As in \cite[\S VI.12, Problem 7]{Knapp2002}, all simple $\alpha \in \Pi$ are restrictions of simple roots of $\mathfrak h_{\mathbb C}$. Let $\alpha_1, \ldots, \alpha_r$ be simple lifts to $\mathfrak h_{\mathbb C}$ of the simple restricted roots of $\mathfrak a$. If $\mathfrak g$ is noncompact, then $r \geq 1$. Because $\mathfrak g_{\mathbb C}$ is simple, by Lemma~\ref{complexrootsbigger} there are at least $\dim_{\mathbb C}(\mathfrak h_{\mathbb C}) - 1$ positive roots $\beta$ of $\mathfrak h_{\mathbb C}$ with $\beta > \alpha_1$. The restrictions of all these roots are positive. Though the restrictions may coincide, they give the following lower bound for multiplicities:
\[ \sum_{\alpha \in \Sigma^+} m(\alpha) \geq r + \dim_{\mathbb C}(\mathfrak h_{\mathbb C}) - 1 \,. \]
if this is equal to $r$, then $\dim_{\mathbb C}(\mathfrak h_{\mathbb C}) = 1$. That is, $\mathfrak g_{\mathbb C} \cong \mathfrak{sl}_2(\mathbb C)$. Because we are assuming $\mathfrak g$ is noncompact, it must be the unique noncompact form of $\mathfrak{sl}_2(\mathbb C)$, which is $\mathfrak{sl}_2(\mathbb R)$.
\end{proof}

\begin{proposition}\label{Klargecharacterization}Let $G$ be a semisimple Lie group. The following are equivalent:
\begin{enumerate}
\item $\dim(K) - \dim(A) = \dim(M)$.
\item All simple factors of $\mathfrak g$ are either compact or $\mathfrak{sl}_2(\mathbb R)$.
\end{enumerate}
\end{proposition}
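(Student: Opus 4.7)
The plan is to convert condition (1) into an equality involving restricted root multiplicities, decompose $\mathfrak g$ into simple ideals, and invoke Lemma~\ref{simplewithfewroots} factor by factor.

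First I would compute $\dim(\mathfrak k)$ using the restricted root space decomposition~\eqref{rootspacedecomp}. The Cartan involution $\theta$ is the identity on $\mathfrak m \subset \mathfrak k$, acts as $-1$ on $\mathfrak a$, and sends $\mathfrak g_\alpha$ to $\mathfrak g_{-\alpha}$. Hence for each $\alpha \in \Sigma^+$ the subspace $\mathfrak k \cap (\mathfrak g_\alpha \oplus \mathfrak g_{-\alpha})$ is the $(+1)$-eigenspace of the involution on a $2m(\alpha)$-dimensional $\theta$-stable space, so has dimension $m(\alpha)$. Summing these contributions,
\[ \dim(\mathfrak k) = \dim(\mathfrak m) + \sum_{\alpha \in \Sigma^+} m(\alpha) \,. \]
Consequently, condition (1) is equivalent to the identity
\[ \sum_{\alpha \in \Sigma^+} m(\alpha) = \dim(\mathfrak a) \,. \]

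Next I would decompose $\mathfrak g$ into $\theta$-stable simple ideals $\mathfrak g = \bigoplus_i \mathfrak g_i$, inducing $\mathfrak a = \bigoplus_i \mathfrak a_i$ with $\mathfrak a_i := \mathfrak a \cap \mathfrak g_i$ maximal abelian in $\mathfrak g_i \cap \mathfrak p$. Since $[\mathfrak g_i, \mathfrak g_j] = 0$ for $i \neq j$, every restricted root of $\mathfrak g$ vanishes on all $\mathfrak a_j$ except one, so the positive roots of $\mathfrak g$ split as the disjoint union of the positive restricted roots of the factors, with the same multiplicities. Thus both sides of the identity displayed above decompose additively over $i$.

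The last step is to apply Lemma~\ref{simplewithfewroots} to each factor: one has $\sum_{\alpha \in \Sigma_i^+} m_i(\alpha) \geq \dim(\mathfrak a_i)$, with equality precisely when $\mathfrak g_i$ is compact or isomorphic to $\mathfrak{sl}_2(\mathbb R)$ (both sides being zero in the compact case). Summing in $i$, the total equality holds if and only if it holds in every summand, i.e.\ exactly under condition (2), which closes the equivalence. There is no real obstacle: all the delicate content is concentrated in Lemma~\ref{simplewithfewroots}, and the only point to verify is that restricted roots and their multiplicities split cleanly along the simple decomposition of $\mathfrak g$, which is immediate.
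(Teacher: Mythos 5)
Your proof is correct and follows essentially the same route as the paper: the identity $\dim(\mathfrak k) - \dim(\mathfrak m) = \sum_{\alpha \in \Sigma^+} m(\alpha)$, reduction to the $\theta$-stable simple ideals, and Lemma~\ref{simplewithfewroots} applied factor by factor. You simply spell out the details (the eigenspace computation and the splitting of restricted roots) that the paper's shorter proof leaves implicit.
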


Note that we always have $\dim(K) \geq \dim(M) + \dim(A)$.

\begin{proof}
The difference $\dim(K) - \dim(M)$ is equal to $\sum_{\alpha \in \Sigma^+} m(\alpha)$. If this is an equality, then we must have the same equality for all simple factors of $\mathfrak g$. By Lemma~\ref{simplewithfewroots}, this is equivalent to saying that  all simple factors of $\mathfrak g$ are either compact or $\mathfrak{sl}_2(\mathbb R)$.
\end{proof}

\subsection{Proofs specific to \texorpdfstring{$\operatorname{SL}_3(\mathbb{R})$}{SL3(R)}}

Some of the lemmas that go into the proof of Proposition~\ref{genericcriticalsetstructuresemisimple} admit more direct and computational proofs when $G = \operatorname{SL}_{3}(\mathbb{R})$. We include one of those here, because it features a rather curious inequality.

\begin{proof}[Direct proof of Corollary~\ref{Cnonempty} when $G = \operatorname{SL}(3, \mathbb{R})$] We use the standard choice of Iwasawa decomposition. Write $H_{0} = \operatorname{diag}(a, b, c)$. Proving that the set $\mathcal{C}(G, H_{0})$ is nonempty is equivalent to showing that there exists a symmetric matrix $X = \operatorname{Ad}_{k^{-1}}(H_{0})$ with zeros on the diagonal, which is isospectral with $H_{0}$. Write $X = \begin{psmallmatrix}
0 & x & y \\
x & 0 & z \\
y & z & 0
\end{psmallmatrix}$. Then $H_{0}$ and $X$ have the same characteristic polynomial when
\begin{align} \label{XHnoughtequations}
\begin{cases}
x^{2} + y^{2} + z^{2} = - (ab + bc + ca) = \frac{1}{2}(a^{2} + b^{2} + c^{2}) \\
2 xyz = abc \,.
\end{cases}
\end{align}
Because the first equation does not see the signs of $x, y, z$, squaring the second does not affect solvability. We may now view \eqref{XHnoughtequations} as prescribing the arithmetic and geometric mean of $x^{2}, y^{2}, z^{2}$, namely $\frac{1}{6}(a^{2} + b^{2} + c^{2})$ respectively $(\frac{1}{4} a^{2}b^{2}c^{2})^{1/3}$. It is well known that a necessary and sufficient condition for this to have a solution in nonnegative numbers $x^{2}, y^{2}, z^{2}$, is that the arithmetic mean is at least the geometric mean, meaning that
\[ \frac{1}{6}(a^{2} + b^{2} + c^{2}) \geq \left( \frac{1}{4} a^{2}b^{2}c^{2} \right)^{1/3} \,. \]
That this condition is satisfied is Lemma~\ref{AMGMlemma} below, and it does not require the hypothesis that $H_{0} \in \mathfrak{a}^{\operatorname{gen}}$.
\end{proof}

\begin{lemma}[AM--\texorpdfstring{$\sqrt[3]{2}$}{sqrt 3 2}GM] \label{AMGMlemma}Let $a, b, c \in \mathbb{R}$ with $a + b + c = 0$. Then
\[ \left( \frac{a^{2} + b^{2} + c^{2}}{3} \right)^{3} \geq 2 \cdot a^{2}b^{2}c^{2} \,. \]
\end{lemma}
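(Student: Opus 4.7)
The plan is to reformulate the inequality in terms of the elementary symmetric polynomials of $a,b,c$, and recognize the resulting statement as the non-negativity of the discriminant of a cubic with real roots. Set
\[ e_1 = a + b + c = 0, \quad e_2 = ab + bc + ca, \quad e_3 = abc. \]
First, I would use the Newton identity
\[ a^2 + b^2 + c^2 = e_1^2 - 2 e_2 = -2 e_2, \]
which in particular shows $e_2 \leq 0$. Substituting this, together with $a^2 b^2 c^2 = e_3^2$, into both sides of the claimed inequality, the inequality becomes equivalent to
\[ -\frac{8\, e_2^3}{27} \geq 2\, e_3^2, \qquad \text{i.e.,} \qquad -4\, e_2^3 - 27\, e_3^2 \geq 0. \]

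The second step is to observe that this is precisely the discriminant condition for the depressed cubic whose roots are $a,b,c$. Because $e_1 = 0$, the monic polynomial with roots $a,b,c$ is
\[ (x-a)(x-b)(x-c) = x^3 + e_2 x - e_3, \]
a depressed cubic $x^3 + px + q$ with $p = e_2$ and $q = -e_3$. Its discriminant is $\Delta = -4 p^3 - 27 q^2 = -4 e_2^3 - 27 e_3^2$, and it is a classical fact that $\Delta \geq 0$ exactly when all three roots are real. Since $a,b,c \in \mathbb{R}$ by hypothesis, this gives the required inequality.

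There is no real obstacle here once one spots the reduction; the only thing to be careful about is tracking the signs, since $e_2$ is negative while $-e_2^3$ is positive. As a sanity check, equality $\Delta = 0$ corresponds to a repeated root of the cubic, which recovers exactly the equality case of the inequality (two of $a,b,c$ equal, the third their common negative double), as one verifies directly with $a = b = 1$, $c = -2$.
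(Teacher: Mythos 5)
Your proof is correct. After the reduction via $e_1=0$, $a^2+b^2+c^2=-2e_2$, $a^2b^2c^2=e_3^2$, the inequality is indeed equivalent to $-4e_2^3-27e_3^2\geq 0$, which is the discriminant of $x^3+e_2x-e_3=(x-a)(x-b)(x-c)$; and since for real roots the discriminant equals $(a-b)^2(b-c)^2(c-a)^2\geq 0$, the claim follows. Note that this product-of-squares identity is all you need, so you can drop the appeal to the ``classical fact'' about the sign of the discriminant deciding realness of roots: the easy direction (real roots $\Rightarrow \Delta\geq 0$) is immediate. This is a genuinely different route from the one the paper has in mind: the paper gives no written proof, only the remark that the lemma is ``an amusing exercise using the AM--GM inequality,'' i.e.\ one is meant to eliminate $c=-(a+b)$ and massage the resulting two-variable inequality with AM--GM. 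Your discriminant argument is arguably cleaner: it is a one-line identity rather than an inequality juggle, and it delivers the equality case for free ($\Delta=0$ iff two of $a,b,c$ coincide, which with $a+b+c=0$ means $(a,b,c)$ is proportional to a permutation of $(1,1,-2)$), exactly matching the equality discussion following the lemma in the paper.
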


The proof of Lemma~\ref{AMGMlemma} is an amusing exercise using the AM--GM inequality. Equality holds if and only if $(a, b, c)$ is proportional to $(1, 1, -2)$ or a permutation thereof. With the notations as in Corollary~\ref{Cnonempty}, this corresponds to the case where $H_{0}$ lies in $\mathfrak{a}_{L}$ for some semistandard Levi subgroup $L \in \mathcal L - \{ G\}$.

\section{The \texorpdfstring{$K$}{K}-projection and the geodesic flow}
\label{secKprojection}

In this subsection we prove Theorem~\ref{KparttoMcentralizer}. For the possibility of uniformity, see Remark~\ref{remarkKvsNproofs}. For $H \in \mathfrak{a}$ and $k \in K$, define a map
\begin{align}
\label{definitionp}
\begin{split}
p_{H, k} : \mathbb{R} & \to \mathfrak{p} \\
t & \mapsto \operatorname{Ad}_{\kappa(k e^{tH})}(H) \,.
\end{split}
\end{align}
Using \eqref{derivativeAd} and Lemma~\ref{differentialkpart} to differentiate $\kappa$, we find that
\[ p_{H, k}' (t) = [E_{\mathfrak{k}}(p_{H, k}(t)), p_{H, k}(t)] \,.\]
Therefore $p_{H, k}(t)$ is a solution to the homogeneous quadratic differential equation in $\mathfrak{p}$ given by
\begin{equation} \label{diffequationpt}
X' = [E_{\mathfrak{k}}X, X] 
\end{equation}
with initial value $\operatorname{Ad}_{k}(H)$. Its flow
\begin{align} \label{dynamicalsystem}
\begin{split}
p : \mathbb{R} \times \mathfrak{p} & \to \mathfrak{p} \\
(t, X) & \mapsto p_{t}(X) \,.
\end{split}
\end{align}
is given explicitly by $p_{t}(X) = p_{H, k}(t)$ when $X = \operatorname{Ad}_{k}(H)$, and is in particular defined for all $t \in \mathbb{R}$. We will prove Theorem~\ref{KparttoMcentralizer} by gathering enough information about the dynamical system \eqref{dynamicalsystem}.

\begin{example}Let $G = \operatorname{PSL}_{2}(\mathbb{R})$ and identify $\mathfrak{p}$ with $\mathbb{R}^{2}$ via an isometry that sends $\mathfrak{a}$ to the horizontal axis $\mathbb{R} \times \{ 0 \}$. It is clear from \eqref{diffequationpt} that the points of $\mathfrak{a}$ are stationary, and it is clear from \eqref{definitionp} that the norm of $X \in \mathfrak{p}$ is invariant under the flow. It is then not hard to see that the phase portrait of the dynamical system \eqref{dynamicalsystem} is as follows: the points of $\mathfrak{a}^{+}$ are unstable equilibra, the points of $\mathfrak{a}^{-}$ are stable equilibria, and apart from the equilibrium at $0$ every other orbit is heteroclinic and describes a Euclidean half circle with endpoints on the horizontal axis, starting at $\mathfrak{a}^{+}$ and ending at $\mathfrak{a}^{-}$.\end{example}

It is clear from \eqref{diffequationpt} that all elements of $\mathfrak{a}$ are equilibria. The key in the proof of Theorem~\ref{KparttoMcentralizer} is the existence of functions that are monotonic along orbits. For a root $\alpha \in \Sigma$, let $H_{\alpha} \in \mathfrak{a}$ be as in \S\ref{secroots}. The set $\{ H_{\alpha} : \alpha \in \Pi \}$ is a basis of $\mathfrak{a}$. When $\alpha \in \Pi$ and $H \in \mathfrak{a}$, define $c_{\alpha}(H)$ to be the $\alpha$th coordinate of $H$ in this basis. More generally, define $c_{\alpha}(X) = c_{\alpha}(E_{\mathfrak{a}}(X))$ for $X \in \mathfrak{g}$.

\begin{lemma} \label{aptderivative}Let $X \in \mathfrak{p} - \mathfrak{a}$. Then $c_{\alpha}([E_{\mathfrak{k}}X, X]) \leq 0$ for all $\alpha \in \Pi$, and at least one is strictly negative. That is, $E_{\mathfrak{a}}[E_{\mathfrak{k}}X, X]$ is nonzero and lies in the closure of the negative Weyl chamber.\end{lemma}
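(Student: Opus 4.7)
The plan is to compute $E_{\mathfrak{a}}([E_{\mathfrak{k}}X, X])$ in closed form and exhibit it as a manifestly nonpositive linear combination of the coroots $H_{\alpha}$, $\alpha \in \Sigma^{+}$; the conclusion will then reduce to the structural fact that every positive root is a nonnegative integer combination of simple roots.

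First I would write $X \in \mathfrak{p}$ in the restricted root space decomposition \eqref{rootspacedecomp}: since $\theta$ exchanges $\mathfrak{g}_{\alpha}$ and $\mathfrak{g}_{-\alpha}$ and $\theta X = -X$, this forces a unique expression
\[
X = H + \sum_{\alpha \in \Sigma^{+}} (Y_{\alpha} - \theta Y_{\alpha}),\qquad H \in \mathfrak{a},\ Y_{\alpha} \in \mathfrak{g}_{\alpha},
\]
and the hypothesis $X \notin \mathfrak{a}$ is equivalent to $Y_{\alpha_{0}} \neq 0$ for at least one positive root $\alpha_{0}$. I would then specialize the last identity of Lemma~\ref{computationAsecondderivative} to $g = 1$ and $Y = X$, so that all adjoint actions trivialize and
\[
E_{\mathfrak{a}}([E_{\mathfrak{k}}X, X]) = \sum_{\alpha \in \Sigma^{+}} \langle \theta R_{-\alpha}(X),\, (R_{\alpha} - \theta R_{-\alpha})(X) \rangle_{\theta}\, H_{\alpha}.
\]
A short substitution using $R_{\alpha}(X) = Y_{\alpha}$, $R_{-\alpha}(X) = -\theta Y_{\alpha}$ and $\theta^{2} = \operatorname{id}$ collapses the $\alpha$-summand to $\langle -Y_{\alpha}, 2Y_{\alpha} \rangle_{\theta}\, H_{\alpha} = -2 \lVert Y_{\alpha} \rVert_{\theta}^{2}\, H_{\alpha}$, hence
\[
E_{\mathfrak{a}}([E_{\mathfrak{k}}X, X]) = -2 \sum_{\alpha \in \Sigma^{+}} \lVert Y_{\alpha} \rVert_{\theta}^{2}\, H_{\alpha}.
\]

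To finish, every $\alpha \in \Sigma^{+}$ admits an expansion $\alpha = \sum_{\beta \in \Pi} n_{\beta}^{\alpha} \beta$ with $n_{\beta}^{\alpha} \in \mathbb{Z}_{\geq 0}$; by linearity of $\alpha \mapsto H_{\alpha}$ the same relation holds on the coroot side, so $c_{\beta}(H_{\alpha}) = n_{\beta}^{\alpha} \geq 0$ for all $\beta \in \Pi$. Applying $c_{\beta}$ to the formula above yields
\[
c_{\beta}([E_{\mathfrak{k}}X, X]) = -2 \sum_{\alpha \in \Sigma^{+}} \lVert Y_{\alpha} \rVert_{\theta}^{2}\, n_{\beta}^{\alpha} \leq 0,
\]
and strict negativity for at least one $\beta$ is immediate by choosing $\beta_{0} \in \Pi$ to appear in the simple-root support of a root $\alpha_{0}$ with $Y_{\alpha_{0}} \neq 0$; such a $\beta_{0}$ exists because $\alpha_{0} \neq 0$.

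I do not expect any serious obstacle: the argument is essentially bookkeeping around Lemma~\ref{computationAsecondderivative}. The only place requiring some care is tracking the correct signs in the substitution step; if that invocation feels too opaque, the same closed form can be reached by hand by first computing $E_{\mathfrak{k}}X = -\sum_{\alpha \in \Sigma^{+}}(Y_{\alpha} + \theta Y_{\alpha})$ from the Iwasawa decomposition of $\mathfrak{g}_{-\alpha}$, and then observing that only the diagonal pairs $\alpha = \beta$ contribute to $E_{\mathfrak{a}}$, via the identity $E_{\mathfrak{a}}[Y_{\alpha}, \theta Y_{\alpha}] = -\lVert Y_{\alpha} \rVert_{\theta}^{2}\, H_{\alpha}$ (a direct consequence of associativity of the Killing form).
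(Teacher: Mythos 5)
Your proof is correct and follows essentially the same route as the paper: both arrive at the closed form $E_{\mathfrak a}([E_{\mathfrak k}X,X]) = -2\sum_{\alpha\in\Sigma^+}\lVert X_\alpha\rVert_\theta^2\, H_\alpha$ and conclude by expanding the $H_\alpha$ in the simple coroots with nonnegative coefficients. The only cosmetic difference is that you obtain the formula by specializing the last identity of Lemma~\ref{computationAsecondderivative} at $g=1$, $Y=X$, whereas the paper redoes the bracket computation directly (your final coefficient bookkeeping via $c_\beta(H_\alpha)=n^\alpha_\beta$ is, if anything, slightly more careful than the paper's double sum).
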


\begin{proof}Write $X = \sum_{\alpha \in \Sigma \cup \{ 0 \}} X_{\alpha}$ in the restricted root space decomposition \eqref{rootspacedecomp}. Using that $X_{- \alpha} = - \theta(X_{\alpha})$ and $E_{\mathfrak{k}} X = \sum_{\alpha \in \Sigma^{+}} (X_{- \alpha} - X_{\alpha})$, we find that
\begin{align*}
E_{\mathfrak{a}}[E_{\mathfrak{k}}X, X] & = \sum_{\alpha \in \Sigma^{+}} [X_{- \alpha} - X_{\alpha}, X_{- \alpha} + X_{\alpha}] \\
& = - 2 \sum_{\alpha \in \Sigma^{+}} [X_{\alpha}, X_{- \alpha}] \\
& = -2 \sum_{\alpha \in \Sigma^{+}} \langle X_{\alpha}, X_{- \alpha} \rangle \cdot H_{\alpha} \\
& =  -2 \sum_{\alpha \in \Sigma^{+}} \lVert X_{\alpha} \rVert_{\theta}^{2} \cdot H_{\alpha} \\
& = -2 \sum_{\alpha \in \Pi} \sum_{\beta \geq \alpha} \lVert X_{\beta} \rVert_{\theta}^{2} \cdot H_{\alpha}
\end{align*}
where in the third equality we have used \cite[\S II, Lemma 2.18]{Knapp2002}. Therefore the coefficients of $E_{\mathfrak{a}}[E_{\mathfrak{k}}X, X]$ in the basis $\{ H_{\alpha} : \alpha \in \Pi \}$ are nonnegative, and because $X \notin \mathfrak{a}$ at least one is nonzero. (This is essentially the same proof as that of Lemma~\ref{hessianquadformnegativedefinite}.)
\end{proof}

\begin{lemma} \label{apartdecreasing} Let $X \in \mathfrak{p} - \mathfrak{a}$ and $t > 0$. Then $c_{\alpha}(p_{t}(X)) \leq c_{\alpha}(X)$ for all $\alpha \in \Pi$, and at least one inequality is strict.
\end{lemma}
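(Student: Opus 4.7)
The plan is to read the statement as an integrated version of Lemma~\ref{aptderivative}. By definition of the flow $p_t$ and the ODE \eqref{diffequationpt}, for any $\alpha \in \Pi$ we have
\[ \frac{d}{ds} c_\alpha(p_s(X)) = c_\alpha([E_{\mathfrak{k}} p_s(X), p_s(X)]) \,, \]
and Lemma~\ref{aptderivative} says that the right hand side is $\leq 0$ whenever $p_s(X) \notin \mathfrak{a}$, with strict inequality for at least one $\alpha$ at each such point. So the whole argument will consist in integrating this from $0$ to $t$.

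First I would check that the orbit $s \mapsto p_s(X)$ never enters $\mathfrak{a}$ for $s \geq 0$. Since \eqref{diffequationpt} has all of $\mathfrak{a}$ as equilibria, if $p_{s_0}(X) \in \mathfrak{a}$ for some $s_0$, then uniqueness of solutions to the smooth ODE \eqref{diffequationpt} (both forwards and backwards in time) forces the whole orbit to be constant, so $X = p_0(X) = p_{s_0}(X) \in \mathfrak{a}$, contradicting the hypothesis. With this in hand, Lemma~\ref{aptderivative} applies at every point of the orbit, and integrating
\[ c_\alpha(p_t(X)) - c_\alpha(X) = \int_0^t c_\alpha([E_{\mathfrak{k}} p_s(X), p_s(X)]) \, ds \]
immediately yields the weak inequality $c_\alpha(p_t(X)) \leq c_\alpha(X)$ for every $\alpha \in \Pi$.

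For the claim that at least one inequality is strict, I would not try to use the time-varying ``at least one $\alpha$'' from Lemma~\ref{aptderivative} directly; instead I would fix the witness at time zero. Since $X \notin \mathfrak{a}$, Lemma~\ref{aptderivative} supplies some $\alpha_0 \in \Pi$ with $c_{\alpha_0}([E_{\mathfrak{k}} X, X]) < 0$. By continuity of $s \mapsto c_{\alpha_0}([E_{\mathfrak{k}} p_s(X), p_s(X)])$, this quantity remains strictly negative on some interval $[0, \varepsilon)$ with $\varepsilon > 0$. The integrand above for $\alpha = \alpha_0$ is then $\leq 0$ throughout $[0, t]$ and $< 0$ on $[0, \varepsilon)$, making the integral strictly negative. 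Hence $c_{\alpha_0}(p_t(X)) < c_{\alpha_0}(X)$.

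The only subtlety I anticipate is precisely the choice of $\alpha_0$: Lemma~\ref{aptderivative} only guarantees some $\alpha$ at each time, and a priori that $\alpha$ could jump. Pinning the witness at $s = 0$ and exploiting continuity of the vector field circumvents this. Everything else is standard ODE bookkeeping.
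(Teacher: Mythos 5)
Your proof is correct and follows essentially the same route as the paper: apply Lemma~\ref{aptderivative} along the orbit and integrate the resulting sign information on the derivatives $\frac{d}{ds}c_\alpha(p_s(X))$. You are in fact slightly more careful than the paper's one-line proof, since you explicitly verify (via uniqueness of solutions and the fact that $\mathfrak a$ consists of equilibria) that the orbit never meets $\mathfrak a$, and you pin a single witness $\alpha_0$ at $s=0$ to get strictness.
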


\begin{proof}By Lemma~\ref{aptderivative}, every $c_{\alpha}(p_{t}(X))$ is monotonically decreasing, and near every $t \in \mathbb{R}$ at least one is strictly decreasing.
\end{proof}

\begin{lemma} \label{nonwanderinga}The only non-wandering points for the dynamical system \eqref{dynamicalsystem} are those of $\mathfrak{a}$.
\end{lemma}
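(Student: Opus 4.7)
The plan is to use the monotonic coordinate functions $c_\alpha$ established in Lemma~\ref{apartdecreasing} as Lyapunov-type functions that prevent any point off $\mathfrak{a}$ from returning near itself. Elements of $\mathfrak{a}$ are fixed by the flow, hence trivially non-wandering, so the content of the lemma is the converse direction: every $X \in \mathfrak{p} - \mathfrak{a}$ is wandering.

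Fix $X \in \mathfrak{p} - \mathfrak{a}$. By Lemma~\ref{apartdecreasing} applied at time $t=1$, there exists $\alpha \in \Pi$ with $c_\alpha(p_1(X)) < c_\alpha(X)$; set $\delta = c_\alpha(X) - c_\alpha(p_1(X)) > 0$. By continuity of $p_1$ and of $c_\alpha$, I can choose an open neighborhood $U$ of $X$ small enough that for every $Y \in U$, both $c_\alpha(Y) > c_\alpha(X) - \delta/3$ and $c_\alpha(p_1(Y)) < c_\alpha(X) - 2\delta/3$ hold. Thus on $U$ the function $c_\alpha$ takes values strictly greater than the values it takes on $p_1(U)$, with a definite gap.

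The key step is then to propagate this gap for all subsequent times using the flow property $p_t(Y) = p_{t-1}(p_1(Y))$ and the monotonicity of $c_\alpha$ along orbits. For $t \geq 1$ and $Y \in U$, either $p_1(Y) \in \mathfrak{a}$, in which case $p_1(Y)$ is an equilibrium and $c_\alpha(p_t(Y)) = c_\alpha(p_1(Y))$, or $p_1(Y) \notin \mathfrak{a}$, in which case Lemma~\ref{apartdecreasing} applied to $p_1(Y)$ gives $c_\alpha(p_t(Y)) \leq c_\alpha(p_1(Y))$. In both cases $c_\alpha(p_t(Y)) < c_\alpha(X) - 2\delta/3$, while every $Z \in U$ satisfies $c_\alpha(Z) > c_\alpha(X) - \delta/3$. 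Hence $p_t(Y) \notin U$ for all $t \geq 1$ and all $Y \in U$, which shows $p_t(U) \cap U = \varnothing$ for $t \geq 1$. Therefore $X$ is wandering, completing the proof.

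I do not expect any serious obstacle here: the work has been done in Lemma~\ref{apartdecreasing}, and what remains is essentially the standard argument that a continuous strict Lyapunov function on the complement of the equilibrium set rules out non-wandering behavior there. The only thing to mind is the case split when $p_1(Y)$ happens to land on the equilibrium set $\mathfrak{a}$, which is handled trivially because then the orbit is constant from time $1$ onward.
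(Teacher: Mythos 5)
Your proof is correct and is essentially the paper's own argument: apply Lemma~\ref{apartdecreasing} at time $1$ to get a strict drop in some $c_\alpha$, use continuity to separate $c_\alpha(U)$ from $c_\alpha(p_1(U))$, and then use monotonicity of $c_\alpha$ along orbits to conclude $p_t(U)\cap U=\varnothing$ for $t\geq 1$. Your explicit $\delta$-gap and the case split when $p_1(Y)\in\mathfrak{a}$ only make precise what the paper leaves implicit.
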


\begin{proof}Let $X \in \mathfrak{p} - \mathfrak{a}$. By Lemma~\ref{apartdecreasing} there exists $\alpha \in \Pi$ for which $c_{\alpha}(p_{1}(X)) < c_{\alpha}(X)$. By continuity of the flow, there exists a neighborhood $U$ of $X$ such that $c_{\alpha}(p_{1}(U)) < c_{\alpha}(U)$. Because $c_{\alpha}$ is decreasing along orbits, we have $p_{t}(U) \cap U = \varnothing$ for all $t \geq 1$. Thus $X$ is wandering.
\end{proof}

\begin{lemma} \label{dynamicallimita}Let $X \in \mathfrak{p}$. There exists $H \in \mathfrak{a}$ such that $\lim_{t \to \infty }p_{t}(X) = H$. \end{lemma}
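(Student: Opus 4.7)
The plan is to combine the boundedness of the orbit with the monotonicity statement from Lemma~\ref{apartdecreasing} and the non-wandering statement from Lemma~\ref{nonwanderinga}, via a standard $\omega$-limit set argument. Without loss of generality we may write $X = \operatorname{Ad}_k(H_0)$ for some $k \in K$ and $H_0 \in \mathfrak{a}$ (since every element of $\mathfrak{p}$ is $K$-conjugate to an element of $\mathfrak{a}$). The formula $p_t(X) = \operatorname{Ad}_{\kappa(ke^{tH_0})}(H_0)$ shows that the entire forward orbit lies in the $\operatorname{Ad}_K$-orbit of $H_0$, which is compact, because $\operatorname{Ad}_K$ preserves $\langle\cdot,\cdot\rangle_\theta$. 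In particular $\{p_t(X) : t \geq 0\}$ is relatively compact in $\mathfrak{p}$.

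Next, I would use Lemma~\ref{apartdecreasing}: for every $\alpha \in \Pi$ the function $t \mapsto c_\alpha(p_t(X))$ is monotonically non-increasing. Since the orbit is bounded and $c_\alpha$ is a continuous linear functional, each $c_\alpha(p_t(X))$ is also bounded below, hence converges as $t \to \infty$. The $c_\alpha$ are coordinates on $\mathfrak{a}$, so $E_{\mathfrak{a}}(p_t(X))$ converges to some $H \in \mathfrak{a}$.

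Finally, I would show that the $\omega$-limit set $\omega(X)$ of the orbit is exactly $\{H\}$, which since the orbit has compact closure will imply $p_t(X) \to H$. The $\omega$-limit set is non-empty by compactness, and every point of $\omega(X)$ is a non-wandering point of the flow, hence lies in $\mathfrak{a}$ by Lemma~\ref{nonwanderinga}. If $Y \in \omega(X)$, pick $t_n \to \infty$ with $p_{t_n}(X) \to Y$; by continuity of $E_{\mathfrak{a}}$ and the previous paragraph,
\[ Y = E_{\mathfrak{a}}(Y) = \lim_{n \to \infty} E_{\mathfrak{a}}(p_{t_n}(X)) = H \,. \]
Thus $\omega(X) = \{H\}$, and a standard argument (if $p_t(X)$ failed to converge to $H$, a subsequence would have a limit point outside any given neighborhood of $H$, contradicting $\omega(X) = \{H\}$) gives $\lim_{t \to \infty} p_t(X) = H$.

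No single step looks genuinely hard here, since all of the heavy lifting is in the lemmas proved just before; the only subtlety I want to be careful about is that the $\omega$-limit set argument truly needs the compact closure of the orbit (to extract convergent subsequences and to deduce convergence from the singleton $\omega$-limit set), and this is precisely what the $\operatorname{Ad}_K$-orbit description of $p_t(X)$ provides.
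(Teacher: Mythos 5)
Your proof is correct, and it follows the same overall skeleton as the paper's (relative compactness of the orbit inside $\operatorname{Ad}_K(H_0)$, the $\omega$-limit set, and Lemma~\ref{nonwanderinga} to force $\omega(X) \subset \mathfrak{a}$), but it pins down the singleton in a genuinely different way. The paper observes that $\omega(X) \subset \mathfrak{a} \cap \operatorname{Ad}_K(H_0)$, identifies this intersection with the (finite, hence discrete) Weyl group orbit of $H_0$ via Lemma~\ref{conjugatetoAimplieslevi}, and concludes by connectedness of the $\omega$-limit set. You instead reuse the monotonicity of the coordinates $c_\alpha$ (Lemma~\ref{apartdecreasing}, or directly Lemma~\ref{aptderivative}): boundedness plus monotone non-increase gives convergence of $E_{\mathfrak{a}}(p_t(X))$ to some $H$, and since every point of $\omega(X)$ lies in $\mathfrak{a}$ it must equal its own $\mathfrak{a}$-projection, hence equal $H$. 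Your route avoids both Lemma~\ref{conjugatetoAimplieslevi} and the connectedness/discreteness argument, at the cost of not making explicit that the limit lies in the Weyl orbit of $H_0$; that extra information is what the paper uses immediately afterwards in the proof of Theorem~\ref{KparttoMcentralizer} (the limit is $\operatorname{Ad}_m(H)$ with $m \in M'$), though it also follows from your argument since the limit lies in the closed set $\operatorname{Ad}_K(H_0)$ and in $\mathfrak{a}$. One small point to keep tidy: Lemma~\ref{apartdecreasing} is stated for $X \in \mathfrak{p} - \mathfrak{a}$ and compares time $0$ with time $t$, so the monotonicity of $t \mapsto c_\alpha(p_t(X))$ along the whole orbit should be justified via the flow property (or by quoting Lemma~\ref{aptderivative} at each orbit point, noting points of $\mathfrak{a}$ are equilibria); this is exactly what the paper's proof of Lemma~\ref{apartdecreasing} does, so it is a matter of citation rather than substance.
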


\begin{proof}Write $X = \operatorname{Ad}_{k}(H)$ with $k \in K$ and $H \in \mathfrak{a}$. Consider its $\omega$-limit $\omega(X)$. By Lemma~\ref{nonwanderinga} we have $\omega(X) \subset \mathfrak{a}$. In view of the explicit expression for $p_{t}(X)$, we have $\omega(X) \subset \operatorname{Ad}_{K}(H)$ by continuity. Thus $\omega(X) \subset \mathfrak{a} \cap \operatorname{Ad}_{K}(H)$. By Lemma~\ref{conjugatetoAimplieslevi}, this intersection is equal to the Weyl group orbit of $H$. In particular, it is discrete. Because $\omega(X)$ is connected, it must consist of a single point of $\mathfrak{a}$.
\end{proof} 

\begin{proof}[Proof of Proposition~\ref{KparttoMcentralizer}] The $K$-projection of $ge^{tH}$ does not depend on the triangular part of $g$, so we may assume that $g \in K$. Conjugation by $k \in K$ gives a diffeomorphism
\[ K / Z_{K}(H) \to \operatorname{Ad}_{K}(H) \,. \]
Take $k \in G$. When $t \to + \infty$, the image of $\kappa(k e^{tH})$ under this map tends to an element $\operatorname{Ad}_{m}(H) \in \operatorname{Ad}_{M'}(H)$ by Lemma~\ref{dynamicallimita}. Therefore $\kappa(k e^{tH})$ tends to the point $m Z_{K}(H)$ in the quotient $K / Z_{K}(H)$. Equivalently, $\kappa(k e^{tH})$ tends to the set $m Z_{K}(H)$ in $K$.
\end{proof}

\begin{remark}\label{remarkKvsNproofs}In general the convergence in Lemma~\ref{dynamicallimita} is not uniform. Already for $G = \operatorname{SL}_3(\mathbb R)$ there are heteroclinic orbits that have nearby orbits with totally different limit points, even some that emanate from equilibria in the positive Weyl chamber. Some of these orbits, that come in one-dimensional families, can be seen to lie in Levi subalgebras (Lie algebras of $L \in \mathcal L$). But not all of them do and certainly not those that come in two-dimensional families. In fact, it seems likely that these badly behaved orbits correspond exactly to the partition of $K$ into the sets $\kappa(\Omega_{\mathcal S})$ defined in \S\ref{npartcanonicalpartitions}; compare Example~\ref{omegaslthree}.

Further evidence for this is the following observation. Let $\rho : G \to \operatorname{SL}(V)$ be any representation with finite kernel, and let $\Omega_{\mathcal S^0}$ be the set defined in \S\ref{npartcanonicalpartitions}. As explained in Remark~\ref{atonegativechamber} we expect that for $g \in \Omega_{\mathcal S^0}$ and $\lambda \in (\mathfrak a^*)^+$ we have $\lambda(H(ga)) \to - \infty$ as $a \to \infty$ in $A$, and we sketched a proof of this when $G =\operatorname{SL}_n(\mathbb R)$ and $\rho = \operatorname{Std}$. Moreover, when $\lambda \in \Sigma^+$ is merely a positive root, it is still reasonable to expect that $\lambda(H(ga)) \to - \infty$ as $a \to \infty$ in $A$ uniformly along regular directions; this is also apparent for $G =\operatorname{SL}_n(\mathbb R)$. Now writing $ga = n'a'k'$, we have for $H \in \mathfrak a$ that $\operatorname{Ad}_{k'}(H) = \operatorname{Ad}_{a'^{-1} n'^{-1}g}(H)$. When $g \in  \Omega_{\mathcal S^0}$ stays in a compact set, then so does $n'$, by Theorem~\ref{npartboundeduniform}, and if $\lambda(a') \to - \infty$ for all positive roots $\lambda$, then the positive root space components of $\operatorname{Ad}_{a'^{-1} n'^{-1}g}(H)$ tend to zero. On the other hand this equals $\operatorname{Ad}_{k'}(H)$, which lives in a bounded subset of $\mathfrak p$, so that the negative root space components approach zero as well, meaning that $\operatorname{Ad}_{k'}(H) \to \mathfrak a$ and therefore $k' \to M' Z_K(H)$. Taking $H \in \mathfrak a$ regular, we would obtain that $k' \to M'$, uniformly for $g$ in compact subsets of $\Omega_{\mathcal S^0}$ and $a\to \infty$ inside regular cones of $A$.
\end{remark}

\bibliographystyle{plain}
\bibliography{../../../library/_bib/bibliography}

\end{document}